\numberwithin{equation}{section} \theoremstyle{plain}
\newcommand{\tr}{\mbox{\textnormal{tr\,}}}
\newcommand{\E}{\mbox{\textnormal{E}}}
\newcommand{\PP}{\mathbb{P}}
\newcommand{\R}{\mathbb{R}}
\newcommand{\Q}{\mathbb{Q}}
\newcommand{\N}{\mathbb{N}}
\newcommand{\half}{\mbox{\normalsize${\frac{1}{2}}$}}
\newcommand{\quart}{\mbox{\normalsize${\frac{1}{4}}$}}
\newcommand{\dd}{\textnormal{d}}
\newcommand{\wX}{\widetilde{X}}
\newcommand{\wa}{\widetilde{a}}
\newcommand{\wb}{\widetilde{b}}
\newcommand{\wB}{\widetilde{B}}
\newcommand{\wv}{\widetilde{v}}
\newcommand{\wu}{\widetilde{u}}
\newcommand{\wW}{\widetilde{W}}
\newcommand{\wZ}{\widetilde{Z}}
\newcommand{\wY}{\widetilde{Y}}
\newcommand{\wmu}{\widetilde{\mu}}
\newcommand{\wtheta}{\widetilde{\theta}}
\newcommand{\diag}{\textnormal{diag}}
\newcommand{\Id}{\textnormal{I}}
\newcommand{\Di}{\partial\mathcal{D}_i}
\newcommand{\XXi}{\partial\mathcal{X}_i}
\newcommand{\sA}{A}
\newcommand{\sgn}{\mbox{\textnormal{sgn\,}}}
\newcommand{\rank}{\textnormal{rank\,}}
\newcommand{\vect}{\textnormal{vec}}
\newcommand{\filpspace}{(\Omega,\mathcal{F},(\mathcal{F}_t),\PP)}
\newtheorem{theorem}{Theorem}[section]
\newtheorem{lemma}[theorem]{Lemma}
\newtheorem{prop}[theorem]{Proposition}
\newtheorem{cor}[theorem]{Corollary}
\theoremstyle{definition}
\newtheorem{definition}[theorem]{Definition}
\newtheorem{example}[theorem]{Example}
\theoremstyle{remark}
\newtheorem{remark}[theorem]{Remark}
\numberwithin{equation}{section}
\begin{document}

\begin{frontmatter}
\title{Affine diffusions with non-canonical state space}
\runtitle{Affine diffusions with non-canonical state space}

\begin{aug}
\author{\fnms{Peter} \snm{Spreij}\ead[label=e1]{spreij@uva.nl}}
and
\author{\fnms{Enno} \snm{Veerman}\ead[label=e2]{e.veerman@uva.nl}}

\runauthor{P. Spreij and E. Veerman}

\affiliation{University of Amsterdam}

\address{Korteweg-de Vries Institute for
Mathematics\\
Universiteit van Amsterdam \\
Science park 904\\
1098XH Amsterdam
\\The Netherlands\\
\printead{e1} \phantom{E-mail:\ }\printead*{e2}}

\end{aug}

\begin{abstract} Multidimensional affine diffusions have been studied in detail for the case of a canonical state
space.
We present results for general state spaces and provide a complete
characterization of all possible affine diffusions with polyhedral
and quadratic state space.  We give necessary and sufficient
conditions on the behavior of drift and diffusion on the boundary
of the state space in order to obtain invariance and to prove
strong existence and uniqueness.
\end{abstract}

\begin{keyword}[class=MSC]
\kwd[Primary ]{60J60} \kwd{91G30}
%\kwd[; secondary ]{60K35}
\end{keyword}

\begin{keyword}
\kwd{affine diffusions} \kwd{stochastic invariance} \kwd{strong
solutions} \kwd{polyhedral state space} \kwd{quadratic state
space}
\end{keyword}

\end{frontmatter}

\section{Introduction}
Affine diffusions, introduced in the pioneering paper~\cite{dk96}
by Duffie and Kan, are widely used in finance for modelling the
term structure of interest rates. Their main attraction lies in
the fact that they imply closed form expressions for  bond prices.
Affine diffusions are $p$-dimensional Markov processes that solve
an \emph{affine} stochastic differential equation (SDE) driven by
a Brownian motion, i.e.\ an SDE with a drift $\mu(x)$ and
diffusion matrix $\theta(x)$, both affine functions in the
argument $x$. There are three important issues in the theory of
affine diffusions, to wit
\begin{itemize}
\item
{\em stochastic invariance} of a subset $\mathcal{X}$ of $\R^p$,
the state space,
\item
the existence and uniqueness of strong solutions to the SDE with
values in $\mathcal{X}$,
\item
the validity of the so-called \emph{affine transform formula} for
exponential moments.
\end{itemize}

Most of the theory that has recently been developed, concerns
affine diffusions with a canonical state space
$\R^m_{\geq0}\times\R^{p-m}$, henceforth referred to as the state
space in {\em standard canonical form}, due to its tractable
appearance which might ease the verification of possible technical
conditions. The notion of a canonical state space has been
introduced in \cite{ds00}. Worth mentioning is the seminal paper
\cite{dfs03} by Duffie, Filipovi\'{c} and Schachermayer, who
provide a complete characterization of (regular) affine processes,
allowing jumps as well, under the assumption of a standard
canonical state space. Regarding the three issues mentioned above,
for affine SDEs with canonical state space it is relatively easy
to establish strong existence and uniqueness, as well as to derive
conditions for invariance, see e.g.~\cite{ds00,fm09}. Moreover,
until recently, the affine transform formula has only been fully
verified for affine diffusions with a standard canonical state
space, see \cite{fm09}.

The current paper together with a companion paper \cite{part1} contribute to the theory of affine diffusions with a
\emph{non-canonical state space}. We will characterize all affine diffusions, focussing our attention on \emph{polyhedrons} (of which the standard canonical state space is a special case) as well as \emph{quadratic}
state spaces (those of which the boundary is characterized by a
quadratic function), though more is possible. For example, the
matrix-valued affine processes and related Wishart processes
treated in \cite{cfmt09,wishart} have the cone of positive
semi-definite matrices as their state space. Our results extend
the classification of \cite{gs06} for the two-dimensional case to
higher dimensions. In \cite{gs06} it is shown that besides an
intersection of halfspaces, also a parabolic state space is
possible. We will see that in higher dimensions the quadratic
state spaces are not limited to the parabolic ones; there exist
also affine diffusions whose state space is a cone.

The present paper concerns the first two of three mentioned
issues for affine diffusions that live on a non-canonical
state space. Results on the third one are presented in the
companion paper~\cite{part1}, where we extend the results in
\cite{fm09} on the validity of the affine transform formula for
canonical to general state spaces. Returning to the first issue, in the current paper we derive conditions for the drift and diffusion matrix on the boundary of $\mathcal{X}$ to ensure stochastic invariance for both the polyhedral and the quadratic state space.
%, in
%orderto obtain stochastic invariance.
For the standard canonical state space these conditions are often called \emph{admissibility conditions},
see~\cite{ds00} and~\cite{dfs03}. The second issue, existence of a
unique strong solution to an affine SDE, is in general not
straightforward, as the square root of an affine matrix
valued function $\theta$ is not locally Lipschitz continuous for
singular $\theta$. This paper follows two approaches to solve this problem.

The first one is by invoking a result by Yamada and Watanabe
\cite[Theorem~1]{YamadaI}, as is done in \cite{ds00,fm09}. This
result is essentially only applicable for the standard canonical
state space. Under invariance conditions though, we prove that a
general polyhedral state space can be transformed in some kind of
canonical form, not necessarily the standard one, for which the
result by Yamada and Watanabe does apply. For a parabolic state
space unique strong solutions can be similarly obtained by
application of an appropriate modification of this result.

The second way to obtain unique strong solutions is to impose
conditions for invariance of $\{x\in\R^p:\theta(x)\mbox{ strictly positive definite}\}$ (also denoted by $\{\theta>0\})$, an approach followed
in \cite{dk96} for affine diffusions with a diagonalizable diffusion matrix and in \cite{pfaffel09} for matrix-valued diffusions. Strong existence and uniqueness is guaranteed, as the unique positive definite square root of $\theta$ is
locally Lipschitz continuous on $\{\theta>0\}$.  In the present
paper we derive invariance conditions for general state spaces,
following the arguments in \cite{pfaffel09}. This enables us to
obtain existence and uniqueness of affine diffusions whose state
space is a cone.

As a side note we mention that invariance of $\{\theta>0\}$ is
also important for applications. For example, in an affine term
structure model one often desires an affine structure of the
underlying SDE under both the risk-neutral and the physical
measure. For this purpose the invariance conditions for
$\{\theta>0\}$ are relevant in view of \cite[Corollary
A.9]{part1}, cf. \cite{duf02,cher2007}. Therefore, we will provide
these conditions not only for the cone but also for the polyhedral
and parabolic state space.

The remainder of this paper is organized as follows. After
introducing in Section~\ref{sec:def} some notation and defining
affine SDEs and diffusions more carefully, as well as presenting a
more detailed description of  our aims, we discuss in
Section~\ref{sec:stochinvar} stochastic invariance of the state
space. Necessary conditions (admissibility conditions) on the
behavior of the drift and the diffusion matrix on the boundary of
a general closed convex state space are derived, whereas
sufficient conditions are obtained for particular cases. In this
section we also provide sufficient conditions for stochastic
invariance of an open state space.

The invariance conditions derived in Section~\ref{sec:stochinvar}
are used in Section~\ref{sec:polyhedral} and
Section~\ref{sec:quadratic} to characterize all affine diffusions
with polyhedral respectively quadratic state space. For the former
we also give sufficient conditions, extending those known from the
literature (\cite{ds00,cfk08,fm09}), under which the diffusion
matrix can be diagonalized. In particular we show that the
classical model of \cite{dk96} can be transformed into the
canonical form of \cite{ds00}. The results from convex analysis
that we use in Section~\ref{sec:polyhedral} are stated and proved
in Appendix~\ref{sec:convexgeo}. In Section~\ref{sec:quadratic} we
show that for quadratic state spaces there are essentially only
two types of state spaces possible, a (multidimensional) parabola
and a cone. For each of these types we are able to give a full
characterization of the possible diffusion matrices.

\section{Definitions, approach and notation}\label{sec:def}

%\subsection*{Set-up}
Let $p\in\N$. We are given a $p$-dimensional stochastic
differential equation
\begin{equation}\label{eq:SDE}
\dd X_t=\mu(X_t)\dd t+\sigma(X_t)\dd W_t,
\end{equation}
for continuous functions $\mu:\R^p\rightarrow\R^p$ and
$\sigma:\R^p\rightarrow\R^{p\times p}$ that satisfy the linear
growth condition
\begin{align}\label{eq:lingrow}
\|\mu(x)\|+\|\sigma(x)\|\leq M(1+\|x\|),\mbox{ for all $x\in\R^p$,
some $M>0$}.
\end{align}
By Theorems~IV.2.3 and IV.2.4 in \cite{Ikwat}, for all initial
conditions $x_0\in\R^p$ there exists a \emph{weak solution}
$(X,W)$ to (\ref{eq:SDE}), that is, there exists a filtered
probability space $\filpspace$ satisfying the usual conditions,
with a $p$-dimensional $\mathcal{F}_t$-Brownian motion $W$ and an
adapted $p$-dimensional stochastic process $X$, such that
$X_0=x_0$ a.s.\ and (\ref{eq:SDE}) holds. Let us also recall the
result from Yamada and Watanabe \cite[Theorem~21.14]{Kallenberg}
that (\ref{eq:SDE}) has a \emph{unique strong solution} if and
only if weak existence and pathwise uniqueness holds.

We use the following definitions.
\begin{definition}\label{def:stochinvar}
We call a measurable set $\mathcal{X}\subset\R^p$
\emph{stochastically invariant}, if for all $x_0\in\mathcal{X}$,
there exists a weak solution $(X,W)$ to (\ref{eq:SDE}) with
initial condition $x_0$ such that $X_t\in\mathcal{X}$ almost
surely, for all $t\geq0$.
\end{definition}
\begin{definition}
The SDE~(\ref{eq:SDE}) is called an \emph{affine SDE} with state
space $\mathcal{X}\subset\R^p$ if
\begin{enumerate}
\item it has a unique strong solution;
\item $\mathcal{X}$ is stochastically invariant;
%i.e.\ for all $x_0\in\mathcal{X}$ there exists a weak solution $X$ to
%(\ref{eq:SDE}) with initial condition $x_0$ such that
%$X_t\in\mathcal{X}$ a.s.;
\item the drift $\mu$ and diffusion matrix $\theta=\sigma\sigma^\top$ are affine in $x$ on $\mathcal{X}$, i.e.\
\begin{align}\label{eq:affinemusigma}
\mu(x)=a x +b,\quad\theta(x)= A^0+\sum_{i=1}^p A^i x_i,\mbox{ for
all $x\in\mathcal{X}$},
\end{align}
for some $a\in\R^{p\times p}$, $b\in\R^p$, symmetric
$A^i\in\R^{p\times p}$.
\end{enumerate}
%We call a stochastic process $X$ an \emph{affine diffusion} with
%state space $\mathcal{X}$ if $X_0\in\mathcal{X}$ and $X$ solves an
%affine SDE, which implies that $\mu(X_t)$ and
%$\sigma(X_t)\sigma(X_t)^\top$ are affine in $X_t$ for all
%$t\geq0$.
The unique strong solution to an affine SDE is called an affine
diffusion, which corresponds to the definition in
\cite{fm09,part1} in view of \cite[Theorem~2.5]{part1}.
\end{definition}
\medskip

Throughout the next sections we will address the following topics.
Given a state space $\mathcal{X}$, we determine all affine
functions $\mu:\R^p\rightarrow\R^p$ and $\theta:\R^{p\times
p}\rightarrow\R^p$ such that there exists a continuous square root
$\sigma$ of $\theta$ on $\mathcal{X}$ for which (\ref{eq:SDE}) is
an affine SDE. To that end, the following three aspects have to be taken into consideration.
\begin{itemize}
\item
First, it is necessary that $
\mathcal{X}\subset\{\theta\geq0\}$, since
$\theta(x)=\sigma(x)\sigma(x)^\top$ for $x\in\mathcal{X}$.
\item
Second, for stochastic invariance,
one has to impose conditions on $\mu$ and $\theta$ on the boundary
of $\mathcal{X}$, in order to prevent the solution $X$ from
leaving the state space $\mathcal{X}$.
\item
Third, one has to construct a square root $\sigma$ such that
(\ref{eq:SDE}) admits a unique strong solution that stays in
$\mathcal{X}$. Remarkably, for the polyhedral and parabolic state
space we consider, the conditions for stochastic invariance enable
the construction of such a square root $\sigma$, see
Theorems~\ref{th:constructsigma} and~\ref{th:quadr}. For the
conical state space we obtain unique strong solutions by imposing
conditions for invariance of $\{\theta>0\}$, see
Theorem~\ref{th:strongsolconical}.
\end{itemize}

Although obvious, it is worth noting that if $X$ is an affine
diffusion with drift $\mu(x)$, diffusion matrix $\theta(x)$ and
state space $\mathcal{X}$, then $LX+\ell$ is an affine diffusion
with drift $L\mu(L^{-1}(x-\ell))$, diffusion matrix
$L\theta(L^{-1}(x-\ell))L^\top$ and state space
$L\mathcal{X}+\ell$, for non-singular $L\in\R^{p\times p}$,
$\ell\in\R^p$. Therefore, for the tasks as outlined above, it suffices to characterize all affine
diffusions where the state space is in a certain ``canonical''
form (not to be confused with the standard canonical form), thereby obtaining all
remaining diffusions by affine transformations.

\medskip

\begin{remark}\label{rem:generator}
The distribution of $X$ does not change with different choices of
the square root $\sigma$ (as long as $X$ stays in $\mathcal{X}$
for these choices), since it is determined by the generator, which
depends on $\sigma$ only through $\sigma\sigma^\top$. Note however
that if strong existence and uniqueness holds for one particular
choice of $\sigma$, it does not automatically hold for other
choices. For instance take $\theta(x)=1$ in $\R$ and consider the
1-dimensional SDE $\dd X_t = \sigma(X_t)\dd W_t$ with
$\sigma\sigma^\top=\theta$. Existence and uniqueness of a strong
solution holds when we take $\sigma(x)=1$, while one only has a
weak solution for the choice $\sigma(x)=\sgn(x+)$, see
\cite[Example 5.3.5]{Karshr}.
%\[
%\sigma(x)=\begin{pmatrix}
%    \cos(\|x\|\arg(x)) & -\sin(\|x\|\arg(x)) \\
%    \sin(\|x\|\arg(x)) & \cos(\|x\|\arg(x))
%  \end{pmatrix}.
%\]
%Indeed, if $(X,W)$ is a weak solution with initial condition
%$X_0=0$, then the same holds for $(-X,W)$. Since $\dd\langle
%X\rangle_t=\Id\,\dd t$, we have that $X$ is a Brownian motion, so
%in particular $X\neq 0$. Hence $X\neq-X$ and pathwise uniqueness
%fails.
\end{remark}

\subsection*{Matrix notation} The following notation
regarding matrices and vectors is used throughout. Let $p,q\in\N$,
$P=\{1,\ldots,p\}$, $Q=\{1,\ldots,q\}$, $A\in\R^{p\times q}$,
$I\subset P$, $J\subset Q$. Write $I=\{i_1,\ldots,i_{\# I}\}$,
$J=\{j_1,\ldots,j_{\# J}\}$, with $i_1\leq i_2\leq \ldots\leq
i_{\# I}$ and  $j_1\leq j_2\leq\ldots\leq j_{\# J}$. Then $A_{IJ}$
denotes the $(\# I \times \#J)$-matrix with elements
$(A_{IJ})_{kl}=A_{i_k j_l}$.  If $\#I=1$, say $I=\{i\}$, we write
$A_{iJ}$ instead. If $J=Q$ then we write $A_I$ instead. In
particular, $A_i$ denotes the $i$-th row of $A$. The $j$-th column
is denoted by $A^j$ and the transpose of $A$ is denoted by
$A^\top$. The above notation is also used for matrix-valued
functions $\phi$, e.g.\ $\phi^\top(x)$ stands for
$(\phi(x))^\top$.

For $a_1,\ldots,a_p\in\R$ we write $\diag(a_1,\ldots,a_p)$ for the
$p$-dimensional diagonal matrix $D$ with diagonal elements
$D_{ii}=a_i$, $i\in P$. We also write $\diag(a)$ instead, where
$a$ denotes the vector with elements $a_i$, sometimes explicitly
denoted by $a=\textnormal{vec}(a_1,\ldots,a_p)$. For a vector $v$
we write $|v|$ for the vector with elements $|v_i|$ and
analogously $\sqrt{|v|}$ denotes the vector with elements
$\sqrt{|v_i|}$. The identity matrix is written as $\Id$. We write
$[A]$ for the linear span of the row vectors of $A$. If $A$ and
$B$ are two matrices with the same column dimension, then $A\bot
B$ stands for $[A]\bot[B]$, i.e.\ $A_i B_j^\top=0$ for all $i$ and
$j$. The unique positive semi-definite square root of a positive
semi-definite matrix $A$ is denoted by $A^{1/2}$. For a square
matrix $A$ we write $|A|$ for $(AA^\top)^{1/2}$. We will often
make use of the fact that the matrix-valued function $A\mapsto
|A|^{1/2}$ is continuous, which is a consequence of
\cite[Theorem~X.1.1]{bhatia}.

\section{Stochastic invariance}\label{sec:stochinvar}

In this section we obtain necessary and in some cases sufficient
boundary conditions for stochastic invariance, see
Definition~\ref{def:stochinvar}. We first consider stochastic
invariance of a closed convex set $\mathcal{X}\subset\R^p$, for
which we make use of the fact that it can be written as an
intersection of halfspaces, i.e.\
\begin{align}\label{eq:convexX}
\mathcal{X}=\bigcap_{i\in I}\{u_i\geq 0\},
\end{align}
with $I$ some index set and $u_i:\R^p\rightarrow\R:x\mapsto
\gamma_i x +\delta_i$, for some $\gamma_i\in\R^{1\times p}$,
$\delta_i\in\R$. We denote the $i$-th boundary segment
$\mathcal{X}\cap\{u_i=0\}$ of $\partial\mathcal{X}$ with $\XXi$.
The following proposition is partly proved in
\cite[Lemma~B.1]{fm09}. We give a more intuitive proof, involving
an appropriate change of measure.

\begin{prop}\label{prop:necinvar}
Let $\mathcal{X}\subset\R^p$ be a closed convex set given by
(\ref{eq:convexX}) and assume $\mathcal{X}$ is stochastically
invariant. Then necessarily it holds that
\begin{align}
\forall i\in I,\forall x\in\XXi:
\gamma_i\sigma(x)&=0\label{al:theta}\\
\forall i\in I,\forall x\in\XXi:\gamma_i\mu(x)&\geq0\label{al:mu}.
\end{align}
\end{prop}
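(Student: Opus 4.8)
The plan is to establish the two necessary conditions by analyzing the behaviour of the process $u_i(X_t)=\gamma_i X_t + \delta_i$ on each boundary segment $\XXi$. Since $\mathcal{X}\subset\{u_i\geq 0\}$ and $\mathcal{X}$ is stochastically invariant, starting the diffusion at a point $x\in\XXi$ (where $u_i(x)=0$) forces the real-valued process $Y_t := u_i(X_t)$ to remain nonnegative for all $t\geq 0$, almost surely, while $Y_0=0$. By It\^o's formula, $Y$ is itself a one-dimensional It\^o process,
\begin{align*}
\dd Y_t = \gamma_i\mu(X_t)\,\dd t + \gamma_i\sigma(X_t)\,\dd W_t,
\end{align*}
with drift $\gamma_i\mu(X_t)$ and diffusion coefficient (squared) $\gamma_i\sigma(X_t)\sigma(X_t)^\top\gamma_i^\top = \gamma_i\theta(X_t)\gamma_i^\top$. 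The whole proof reduces to the elementary fact that a nonnegative It\^o process started at $0$ cannot have a nonzero diffusion coefficient at its starting point, and must have nonnegative drift there.

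First I would prove (\ref{al:theta}). The idea is that if $\gamma_i\sigma(x)\neq 0$ for some $x\in\XXi$, then the Brownian component of $Y$ is nondegenerate near time $0$, so $Y$ would immediately become negative with positive probability, contradicting invariance. To make this rigorous I would exploit the change of measure promised in the text: under an equivalent measure $\widetilde{\PP}$ obtained via Girsanov by removing the drift of $Y$ (legitimate here because the drift coefficient $\gamma_i\mu(X)$ is bounded on a short time interval by continuity and the linear growth bound (\ref{eq:lingrow})), the process $Y$ becomes a continuous local martingale started at $0$. A nonnegative continuous local martingale started at $0$ is a nonnegative supermartingale with zero initial expectation, hence identically $0$ up to its first nonzero quadratic-variation time; in particular its quadratic variation $\int_0^t \gamma_i\theta(X_s)\gamma_i^\top\,\dd s$ must vanish for small $t$. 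By continuity of $\theta$ and right-continuity of the paths at $x$, this forces $\gamma_i\theta(x)\gamma_i^\top=0$, which is exactly $|\gamma_i\sigma(x)|^2=0$, i.e.\ $\gamma_i\sigma(x)=0$. Since the law of $X$ is unaffected by the change of measure (only the drift of the driving noise changes), this conclusion transfers back to $\PP$.

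With (\ref{al:theta}) in hand, proving (\ref{al:mu}) is then straightforward. Once $\gamma_i\sigma(x)=0$ on $\XXi$, the diffusion term of $Y$ vanishes at the starting point, so for small $t$ the behaviour of $Y$ is dominated by its drift: $Y_t \approx \gamma_i\mu(x)\,t$. Formally, from $Y_t = \int_0^t \gamma_i\mu(X_s)\,\dd s + \int_0^t \gamma_i\sigma(X_s)\,\dd W_s$ and $Y_t\geq 0$ with $Y_0=0$, taking expectations (after a localization/stopping argument to control integrability via (\ref{eq:lingrow})) gives $\E[Y_t] = \E\!\int_0^t \gamma_i\mu(X_s)\,\dd s \geq 0$; dividing by $t$ and letting $t\downarrow 0$, using continuity of $\mu$ and dominated convergence, yields $\gamma_i\mu(x)\geq 0$.

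The main obstacle I anticipate is the rigorous justification of the infinitesimal arguments at time $0$, rather than any conceptual difficulty. In particular, the Girsanov change of measure in the first step requires verifying that the density process is a genuine martingale (a Novikov-type or stopping-time truncation argument using that the coefficients are bounded on small neighbourhoods by (\ref{eq:lingrow})), and the limit $t\downarrow 0$ in both steps requires interchanging limits with expectations, which must be underpinned by uniform integrability coming again from the linear growth condition. A subtle point worth handling carefully is that invariance only guarantees the \emph{existence} of some weak solution staying in $\mathcal{X}$; the argument must therefore be phrased for that particular solution, and one must make sure the contradiction in step one (nonnegativity forcing the quadratic variation to vanish) is derived for precisely this solution and not an arbitrary one.
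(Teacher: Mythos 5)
Your proposal is correct, and it draws on the same basic toolkit as the paper (an equivalent Girsanov change of measure combined with nonnegativity of $u_i(X)$ for the invariant solution), but it decomposes the argument quite differently. The paper proves both conditions at once, by contradiction: if (\ref{al:theta}) or (\ref{al:mu}) failed at some $x_0\in\XXi$, one can choose $c\in\R$ so that $\eta(x):=\gamma_i\mu(x)+c\,\gamma_i\sigma(x)\sigma(x)^\top\gamma_i^\top$ is negative at $x_0$ (take $c$ very negative if $\gamma_i\sigma(x_0)\neq0$, any $c$ otherwise); Girsanov with the kernel $c\gamma_i\sigma(X)1_{[0,\tau_1]}$ then turns the $\Q$-drift of $u_i(X)$ into $\eta(X_t)$, and after stopping so that $\eta(X_t)<0$ and the stochastic integral is a $\Q$-martingale, $\E_\Q u_i(X_{T\wedge\tau})<0$, contradicting $u_i(X)\geq 0$, which holds $\Q$-a.s.\ by equivalence of the measures. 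You instead treat the two conditions separately: for (\ref{al:theta}) you use Girsanov to \emph{remove} the drift of $Y=u_i(X)$ and invoke the fact that a nonnegative continuous local martingale started at $0$ vanishes identically, so its quadratic variation $\int_0^t\gamma_i\theta(X_s)\gamma_i^\top\,\dd s$ must vanish, contradicting $\gamma_i\theta(x)\gamma_i^\top>0$; for (\ref{al:mu}) you take plain expectations and let $t\downarrow 0$ --- and note that this second step in fact uses neither (\ref{al:theta}) nor any change of measure, so your treatment of the drift is more elementary than the paper's. Two caveats on your step for (\ref{al:theta}). First, the kernel that removes the drift of $Y$ is $h_t=\gamma_i\mu(X_t)\,\sigma(X_t)^\top\gamma_i^\top/\gamma_i\theta(X_t)\gamma_i^\top$, so what makes the density a martingale is not boundedness of the drift, as you state, but the fact that $\gamma_i\theta(X_t)\gamma_i^\top$ stays bounded away from zero on a stopped interval; this is available precisely because you argue under the hypothesis $\gamma_i\sigma(x)\neq 0$ and the paths are continuous, and the localization must be made explicit (the paper's kernel $c\gamma_i\sigma(X)$ involves no division and avoids this issue altogether, which is one advantage of its unified tilt). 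Second, your closing remark that ``the law of $X$ is unaffected by the change of measure'' is not right --- the drift, hence the law, does change under the new measure; what you need, and what suffices, is equivalence of the two measures, so that the $\PP$-a.s.\ property $Y\geq 0$ holds under the new measure and the a.s.\ conclusion that the quadratic variation vanishes transfers back to $\PP$.
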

\begin{proof}
%\emph{Necessity of (\ref{al:mu}):} suppose $\gamma_i\mu(x)<0$ for
%some $i\in I$, $x\in\mathcal{C}\cap\{u_i=0\}$. Take a weak
%solution $(X,W)$ on some filtered probability space $\filpspace$
%with initial value $X_0=x$. We show that $\PP(u_i(X_{t_0})<0)>0$
%for some $t_0>0$, which contradicts the stochastic invariance of
%$\mathcal{C}$. Note that $\E X_t$ solves the linear ODE
%\[
%\dd \E X_t = (a \E X_t + b)\dd t.
%\]
%In particular $\E X_t$ is continuous and so also
%$\gamma_i\E\mu(X_t)$ is. Since
%$\gamma_i\E\mu(X_0)=\gamma_i\mu(x)<0$, there exists $t_0>0$ such
%that $\gamma_i\E\mu(X_t)<0$ for $t\in[0,t_0]$. Hence
%\[
%\E u_i(X_{t_0})=\delta_i+\gamma_i\E X_{t_0}=
%u_i(x)+\gamma_i\int_0^{t_0} \E\mu(X_t)\dd t=\int_0^{t_0}
%\gamma_i\E\mu(X_t)\dd t<0,
%\]
%which implies $\PP(u_i(X_{t_0})<0)>0$.
We give a proof by contradiction. Suppose there is an $i\in I$ and
$x_0\in\XXi$ such that (\ref{al:theta}) or (\ref{al:mu}) does not
hold. Then there exists $c\in\R$ such that $\eta$ defined as
$\eta(x)=\gamma_i\mu(x)+c\gamma_i\sigma(x)\sigma(x)^\top\gamma_i^\top$
is negative in $x_0$. Let $(X,W)$ be a weak solution to
(\ref{eq:SDE}) with initial condition $x_0$ on some filtered
probability space $\filpspace$.
%There exists a
%stopping time $\tau$ such that
%$L_t=\mathcal{E}(c\gamma_i\sigma(X)1_{[0,\tau]}\cdot W)_t$ is a
%martingale, since $\mathcal{E}(c\gamma_i\sigma(X)\cdot W)_t$ is a
%local martingale and $\mathcal{E}(c\gamma_i\sigma(X)\cdot
%W)_{t\wedge\tau}=\mathcal{E}(c\gamma_i\sigma(X)1_{[0,\tau]}\cdot
%W)_t$.
%Define the stopping time $\tau=\inf\{t>0:\|\rho(X_t)\|\geq 2
%\|\sigma(x)\sigma^\top(x)\|\}$.
There exists a stopping time $\tau_1>0$ such that
$L=\mathcal{E}(c\gamma_i\sigma(X)1_{[0,\tau_1]}\cdot W)$ is a
martingale. Take $T>0$ arbitrarily, then we can change $\PP$ into
an equivalent probability measure $\Q$ on $\mathcal{F}_T$ by $\dd
\Q=L_T\dd \PP$. By Girsanov's Theorem, $W^\Q$ defined by $\dd
W^\Q_t=\dd W_t-c\sigma(X_t)^\top\gamma_i^\top 1_{[0,\tau_1]}(t)\dd
t$, is a Brownian motion under $\Q$ on $[0,T]$. Hence $X$ solves
an SDE under $\Q$ for $t\in[0,T]$, namely
\[
\dd X_t =
(\mu(X_t)+c\sigma(X_t)\sigma(X_t)^\top\gamma_i^\top1_{[0,\tau_1]}(t))\dd
t +\sigma(X_t)\dd W^\Q_t.
\]
Let $\tau_2>0$ be a stopping time such that $(\sigma(X)\cdot
W^\Q)_{t\wedge \tau_2}$ is a $\Q$-martingale and $\eta(X_t)<0$ for
$t\in[0,\tau_2]$, $\Q$-a.s.\ (this is possible since
$\eta(X_0)=\eta(x_0)<0$ and $t\mapsto X_t$ and $\eta$ are
continuous, $\PP$-a.s., whence $\Q$-a.s.). Write
$\tau=\tau_1\wedge\tau_2$. Then it holds that $\tau>0$, $\Q$-a.s.,
and therefore
\begin{align*}
\E_\Q u_i(X_{T\wedge\tau}) &= \delta_i+\gamma_i\E_\Q
X_{T\wedge\tau} =u_i(x_0)+\E_\Q \int_0^{T\wedge\tau}\eta(X_t)\dd
t<0.
\end{align*}
This implies $\Q(\forall t\in[0,T]:u_i(X_t)\geq0)<1$ and by
equivalence of $\PP$ and $\Q$ also $\PP(\forall
t\in[0,T]:u_i(X_t)\geq0)<1$, which contradicts the stochastic
invariance of $\mathcal{X}$.
\end{proof}
\medskip

If $\mathcal{X}\subset\R^p$ given by (\ref{eq:convexX}) is a
convex polyhedron, then $I$ is finite, say $I=\{1,\ldots,q\}$, for
some $q\in\N$. We write $\gamma\in\R^{q\times p}$ for the matrix
with row vectors $\gamma_i$,  $\delta\in\R^q$ for the vector with
elements $\delta_i$ and $u:\R^p\rightarrow\R^q:x\mapsto \gamma
x+\delta$. The necessary conditions obtained in
Proposition~\ref{prop:necinvar} are sufficient when $X$ is a
convex polyhedron congruent to the canonical space
$\R^m_{\geq0}\times\R^{p-m}$.
%This has been proved in
%\cite{tappe09} for more general jump-diffusions. We give a proof
%for the diffusion case, not only for the sake of completeness, but
%also to point out differences with the proof of
%Proposition~\ref{prop:stochinvarcan}.
\begin{prop}\label{prop:sufinvar}
Let $\mathcal{X}\subset\R^p$ be a convex polyhedron given by
(\ref{eq:convexX}) with $I=\{1,\ldots,q\}$ for some $q\in\N$ and
suppose $\gamma$ has full row-rank. Then $\mathcal{X}$ is
stochastically invariant if and only if (\ref{al:theta}) and
(\ref{al:mu}) hold.
\end{prop}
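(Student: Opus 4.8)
The necessity of \eqref{al:theta} and \eqref{al:mu} is precisely the content of Proposition~\ref{prop:necinvar}, so only sufficiency requires an argument. My plan is to reduce the problem to the standard canonical state space $\R^q_{\geq0}\times\R^{p-q}$ by an affine change of coordinates, and then to settle invariance in that case. Since $\gamma$ has full row-rank $q\leq p$, I would extend its rows to a basis of $\R^p$, i.e.\ choose a non-singular $L\in\R^{p\times p}$ whose first $q$ rows coincide with $\gamma_1,\dots,\gamma_q$, and set $\ell=\textnormal{vec}(\delta_1,\dots,\delta_q,0,\dots,0)\in\R^p$. Then $y=Lx+\ell$ satisfies $y_i=\gamma_i x+\delta_i=u_i(x)$ for $i\leq q$, so that $L\mathcal{X}+\ell=\R^q_{\geq0}\times\R^{p-q}$. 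By the affine transformation property noted at the end of Section~\ref{sec:def}, $Y=LX+\ell$ solves an affine SDE with state space $L\mathcal{X}+\ell$, and $\mathcal{X}$ is stochastically invariant for the original SDE if and only if $\R^q_{\geq0}\times\R^{p-q}$ is invariant for the transformed one.

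Next I would check that in the new coordinates conditions \eqref{al:theta} and \eqref{al:mu} turn into the usual admissibility conditions. The defining functions become $y\mapsto y_i$ with unit normals $e_i^\top$, so, writing $\sigma$, $\mu$, $\theta$ for the transformed data, \eqref{al:theta} reads $\sigma_i(y)=0$ and \eqref{al:mu} reads $\mu_i(y)\geq0$ on the face $\{y\in\R^q_{\geq0}\times\R^{p-q}:y_i=0\}$, for each $i\leq q$. The first condition gives $\theta_{ij}=\sigma_i\sigma_j^\top=0$ on $\{y_i=0\}$ for every $j$; since $\theta_{ij}$ is affine and this face is a full-dimensional subset of the hyperplane $\{y_i=0\}$, it forces $\theta_{ij}(y)=c_{ij}y_i$ for constants $c_{ij}$, with $c_{ii}\geq0$ because $\theta_{ii}\geq0$ on the state space. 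Likewise $\mu_i(y)\geq0$ on $\{y_i=0\}$, together with affineness and the unboundedness of the face in the free directions, forces $\mu_i(y)=b_i+\sum_{j\leq q}a_{ij}y_j$ with $b_i\geq0$ and $a_{ij}\geq0$ for $j\neq i$. These are exactly the admissibility conditions for the canonical state space.

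Finally I would establish invariance of $\R^q_{\geq0}\times\R^{p-q}$ under these conditions, which is where the real difficulty lies: the diffusion matrix degenerates on each boundary face, its $i$-th row vanishing where $y_i=0$ with $\theta_{ii}$ vanishing linearly, so $\sigma$ fails to be Lipschitz there and a naive pathwise comparison with an ODE is unavailable. The approach I would take is to use Remark~\ref{rem:generator} to work with a convenient continuous square root, e.g.\ one for which $\sigma_i$ vanishes like $\sqrt{y_i^+}$; then each coordinate $Y^i$ with $i\leq q$ satisfies a scalar square-root SDE whose drift is nonnegative at the boundary, and a Yamada--Watanabe type comparison (controlling the local time at $0$ via $\theta_{ii}(y)\le c_{ii}|y_i|$) keeps $Y^i$ nonnegative up to the explosion time, while the linear growth~\eqref{eq:lingrow} precludes explosion. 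The two delicate points are precisely this boundary degeneracy and the coupling of $\mu_i$ to the remaining coordinates, so I would expect the verification of the scalar comparison to be the main obstacle; since invariance of the canonical state space under exactly these admissibility conditions is classical, I would otherwise simply invoke \cite{ds00,fm09}. Transporting the conclusion back through $L^{-1}$ then yields stochastic invariance of $\mathcal{X}$.
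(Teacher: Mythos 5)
Your first paragraph is exactly the paper's route: since $\gamma$ has full row-rank, $\mathcal{X}$ is congruent to the standard canonical state space $\R^q_{\geq0}\times\R^{p-q}$, and the paper's entire proof is this observation together with a citation of a general invariance result for that state space (\cite[Remark~3.6]{tappe09}); necessity is, as you say, Proposition~\ref{prop:necinvar}. So the reduction step is correct and identical in spirit to the paper.

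The gap is in what you do after the reduction. Proposition~\ref{prop:sufinvar} is a statement about the general SDE~\eqref{eq:SDE}, whose coefficients are only assumed continuous and of linear growth \eqref{eq:lingrow}; neither $\mu$ nor $\theta=\sigma\sigma^\top$ is assumed affine. Your middle paragraph (``since $\theta_{ij}$ is affine\dots'', ``together with affineness\dots'') injects an affineness hypothesis that is not present, and your closing appeal to \cite{ds00,fm09} concerns affine diffusions only. This is not a cosmetic restriction: the paper later applies Proposition~\ref{prop:sufinvar} to genuinely non-affine data --- in Proposition~\ref{prop:invargeneral} it is applied to the process $(\Phi(X_t),X_{P\backslash\{1\},t})$, whose drift contains $\tr(\nabla^2\Phi\,\sigma\sigma^\top)$ and is not an affine function of the transformed state, and in Corollary~\ref{cor:stochinvarcan} the diffusion coefficient $\diag(\sqrt{c_1|x_1|},\ldots,\sqrt{c_p|x_p|})$ is not affine on $\R^p$. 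What is needed to close the argument is precisely a sufficiency statement for \emph{arbitrary} continuous coefficients on the canonical state space, i.e.\ that the flat-boundary conditions $\sigma_i(y)=0$, $\mu_i(y)\geq0$ on each face $\{y_i=0\}$ imply invariance; this is what the paper imports from \cite{tappe09}, and your Yamada--Watanabe comparison sketch cannot substitute for it, since it relies on the bound $\theta_{ii}(y)\leq c_{ii}|y_i|$, which is again a consequence of affineness. A secondary point: invoking Remark~\ref{rem:generator} to switch to a convenient square root is mildly circular here, because that remark only identifies the laws ``as long as $X$ stays in $\mathcal{X}$ for these choices'' --- which is what you are trying to prove; the correct tool is the representation theorem for continuous local martingales, by which a weak solution staying in $\mathcal{X}$ for one square root of $\theta$ can be rewritten, on an extension, as a weak solution for any other square root of the same $\theta$.
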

\begin{proof}
This follows by \cite[Remark 3.6]{tappe09}, since $\mathcal{X}$ is
congruent to the standard canonical state space, as $\gamma$ has
full row-rank.
\end{proof}
\medskip

Proposition~\ref{prop:sufinvar} has the following corollary, which
will turn out to be the building block for proving stochastic
invariance for affine diffusions with general polyhedral state
space, as considered in Section~\ref{sec:polyhedral}.
\begin{cor}\label{cor:stochinvarcan}
Let $a\in\R^{p\times p}$, $b\in\R^p$, $c\in\R^p_{\geq0}$ and
suppose the drift of (\ref{eq:SDE}) is given by
\begin{align}\label{eq:affinemu}
\mu(x)= a x +b,
\end{align}
and the diffusion coefficient by
$\sigma(x)=\diag(\sqrt{c_1|x_1|},\ldots,\sqrt{c_p|x_p|})$, for all
$x\in\R^p$. Then $\R^p_{\geq0}$ is stochastically invariant if and
only if
\begin{align}\label{eq:aijgeq0}
\forall i\in P,\forall j\in P\backslash\{i\}: a_{ij}\geq0\mbox{
and } b_i\geq0.
\end{align}
\end{cor}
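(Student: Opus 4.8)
The plan is to recognize $\R^p_{\geq0}$ as the convex polyhedron $\bigcap_{i=1}^p\{x_i\geq0\}$ of the form (\ref{eq:convexX}), with $u_i(x)=x_i$, so that $\gamma_i=\Id_i$ (the $i$-th row of the identity matrix) and $\delta_i=0$. Then $\gamma=\Id$ has full row-rank, so Proposition~\ref{prop:sufinvar} applies and stochastic invariance of $\R^p_{\geq0}$ is equivalent to the two boundary conditions (\ref{al:theta}) and (\ref{al:mu}). The entire argument then reduces to translating these two conditions into (\ref{eq:aijgeq0}).

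First I would verify that (\ref{al:theta}) holds automatically. The $i$-th boundary segment is $\XXi=\{x\in\R^p_{\geq0}:x_i=0\}$, and since $\sigma$ is diagonal, $\gamma_i\sigma(x)=\Id_i\sigma(x)$ equals the $i$-th row of $\sigma(x)$, whose only possibly nonzero entry is $\sqrt{c_i|x_i|}$ in position $i$. On $\XXi$ we have $x_i=0$, so this entry vanishes and (\ref{al:theta}) holds irrespective of $a$, $b$ and $c$.

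Next I would analyse (\ref{al:mu}). With $\mu(x)=ax+b$ one has $\gamma_i\mu(x)=\sum_{j=1}^p a_{ij}x_j+b_i$, which on $\XXi$ (where $x_i=0$) reduces to $\sum_{j\neq i}a_{ij}x_j+b_i$. Condition (\ref{al:mu}) demands that this be nonnegative for every $x\in\XXi$, i.e.\ for all choices of $x_j\geq0$ with $j\neq i$. Evaluating at $x=0\in\XXi$ forces $b_i\geq0$; fixing a single index $j\neq i$, setting the remaining coordinates to zero and letting $x_j\to\infty$ forces $a_{ij}\geq0$. Conversely, if $a_{ij}\geq0$ for all $j\neq i$ and $b_i\geq0$, then every summand is nonnegative and (\ref{al:mu}) holds. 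Hence (\ref{al:mu}) is equivalent to (\ref{eq:aijgeq0}), and combining this with the fact that (\ref{al:theta}) is automatic, Proposition~\ref{prop:sufinvar} yields the claim.

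The argument is essentially routine once the identification $\gamma=\Id$ has been made. The only point requiring a little care is the universal quantifier ``for all $x\in\XXi$'' in (\ref{al:mu}): I must exploit that the free coordinates $x_j$ range over all of $\R_{\geq0}$ independently in order to extract both the sign conditions on the off-diagonal entries $a_{ij}$ (via $x_j\to\infty$) and the sign condition on $b_i$ (via $x=0$).
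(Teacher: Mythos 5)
Your proof is correct and follows exactly the route the paper intends: the corollary is stated as an immediate consequence of Proposition~\ref{prop:sufinvar}, and your argument simply fills in the routine verification — identifying $\gamma=\Id$ (full row-rank), noting that (\ref{al:theta}) holds automatically for the diagonal $\sigma$ on each boundary segment, and translating (\ref{al:mu}) into (\ref{eq:aijgeq0}) by evaluating at $x=0$ and letting $x_j\to\infty$. Nothing is missing.
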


To obtain conditions for stochastic invariance of a quadratic
state space, we note that if $\mathcal{X}\subset\R^p$ is a closed
convex set with $\partial\mathcal{X}\subset\{\Phi=0\}$ for some
$C^1$-function $\Phi:\R^p\rightarrow\R$, then we can take
(\ref{eq:convexX}) with $I=\partial\mathcal{X}$,
$\gamma_i=\nabla\Phi(i)$ and $\delta_i=-\gamma_i i$, for $i\in I$,
where we write $\nabla\Phi$ for the gradient of $\Phi$ (written as
a row vector). The necessary conditions for invariance
(\ref{al:theta}) and (\ref{al:mu}) in this case read
\begin{align*}
\forall x\in\partial\mathcal{X}: \nabla\Phi(x) \sigma(x)&=0\\
\forall x\in\partial\mathcal{X}:\nabla\Phi(x) \mu(x)&\geq 0.
\end{align*}
The next proposition gives necessary and sufficient conditions for
a particular case. We write $\nabla^2\Phi=\nabla^\top\nabla\Phi$
for the Hessian of a $C^2$-function $\Phi$.
\begin{prop}\label{prop:invargeneral}
Let $\Phi:\R^p\rightarrow\R:x\mapsto x_1-f(x_{P\backslash\{1\}})$
for some $C^2$-function $f:\R^{p-1}\rightarrow \R$. Then
$\mathcal{X}=\{\Phi\geq0\}$ is invariant for (\ref{eq:SDE}) if and
only if
\begin{align}
  \nabla\Phi(x) \sigma(x)&=0\label{con:invarPsi2}\\
\nabla\Phi(x) \mu(x)&\geq -\half\tr(
\nabla^2\Phi(x)\sigma(x)\sigma(x)^\top)\label{con:invarPsi1},
\end{align}
for all $x\in\partial\mathcal{X}=\{\Phi=0\}$.
\end{prop}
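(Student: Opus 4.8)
The plan is to collapse the $p$-dimensional invariance problem onto the scalar process $Y_t:=\Phi(X_t)$, whose sign detects membership of $\mathcal{X}$, since $X_t\in\mathcal{X}\iff Y_t\ge0$. Applying Itô's formula to the $C^2$-function $\Phi$ along a solution of (\ref{eq:SDE}) gives
\[
\dd Y_t=\Big(\nabla\Phi(X_t)\mu(X_t)+\tfrac12\tr\big(\nabla^2\Phi(X_t)\theta(X_t)\big)\Big)\dd t+\nabla\Phi(X_t)\sigma(X_t)\,\dd W_t,
\]
so the drift of $Y$ is exactly the expression in (\ref{con:invarPsi1}) and its diffusion row is the vector $\nabla\Phi\sigma$ of (\ref{con:invarPsi2}); both conditions thus say that on $\partial\mathcal{X}=\{Y=0\}$ the noise driving $Y$ switches off and the surviving drift points into $\{Y\ge0\}$. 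Note that the graph form $\Phi=x_1-f(x_{P\backslash\{1\}})$ forces $\nabla\Phi=(1,-\nabla f)\neq0$ everywhere, so $\{\Phi=0\}$ is a genuine $C^2$-hypersurface; this regularity is what will make the sufficiency argument clean.

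For necessity I would mimic the change-of-measure argument in the proof of Proposition~\ref{prop:necinvar}, but applied to the \emph{nonlinear} functional $Y=\Phi(X)$ rather than to a linear boundary map. The decisive new feature is that Itô's formula now produces the second-order term $\tfrac12\tr(\nabla^2\Phi\,\theta)$, which is precisely what upgrades the naive first-order requirement $\nabla\Phi\mu\ge0$ to the curvature-corrected (\ref{con:invarPsi1}). Concretely, suppose one of the conditions fails at some $x_0\in\partial\mathcal{X}$ and set $q(x):=\nabla\Phi(x)\theta(x)\nabla\Phi(x)^\top=|\nabla\Phi(x)\sigma(x)|^2\ge0$. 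After the Girsanov tilt $\dd W^{\Q}_t=\dd W_t-c\,\sigma(X_t)^\top\nabla\Phi(X_t)^\top\mathbf 1_{[0,\tau_1]}\dd t$, the $\Q$-drift of $Y$ equals $\eta(x)=\nabla\Phi\mu+\tfrac12\tr(\nabla^2\Phi\,\theta)+c\,q(x)$. If (\ref{con:invarPsi2}) fails then $q(x_0)>0$ and $\eta(x_0)<0$ for $c\to-\infty$; if instead (\ref{con:invarPsi2}) holds but (\ref{con:invarPsi1}) fails then $q(x_0)=0$ while the bracket is negative, so $\eta(x_0)<0$ already with $c=0$. In either case the computation of Proposition~\ref{prop:necinvar} yields $\E_{\Q}Y_{T\wedge\tau}=Y_0+\E_{\Q}\int_0^{T\wedge\tau}\eta(X_s)\dd s<0$, contradicting invariance.

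For sufficiency I would work directly with $Y$ and apply the Itô--Meyer formula to $(Y_t^-)^2$. Taking expectations against a localizing sequence $\tau_n$ and using $Y_0\ge0$ gives
\[
\E\,(Y_{t\wedge\tau_n}^-)^2=\E\int_0^{t\wedge\tau_n}\mathbf 1_{\{Y_s<0\}}\big(2Y_s\,b(X_s)+q(X_s)\big)\dd s,
\]
where $b(x)=\nabla\Phi\mu+\tfrac12\tr(\nabla^2\Phi\,\theta)$. On $\{Y_s<0\}$ the two hypotheses deliver the bounds $q(X_s)\le C\,Y_s^2$ and $b(X_s)\ge -C\,|Y_s|$: indeed $q\ge0$ vanishes on $\{\Phi=0\}$, hence so does its gradient, while $b\ge0$ there by (\ref{con:invarPsi1}). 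Consequently the integrand is dominated by $C'(Y_s^-)^2$, and Grönwall's lemma forces $\E\,(Y_t^-)^2=0$, i.e.\ $X_t\in\mathcal{X}$ almost surely for all $t$.

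The main obstacle is exactly the last estimate: extracting the quadratic control $q\le C\Phi^2$ and the one-sided bound $b\ge-C|\Phi|$ near $\partial\mathcal{X}$. That $q$ and $\nabla q$ vanish on the smooth hypersurface $\{\Phi=0\}$ is immediate from $q\ge0$, but promoting this to a true second-order Taylor estimate requires a little regularity of $q=\nabla\Phi\,\theta\,\nabla\Phi^\top$; since $\theta$ is affine and $\Phi$ is $C^2$ this holds locally, and a stopping argument confines the analysis to a neighbourhood of the boundary where the estimates are valid. Alternatively, one may simply invoke a stochastic viability theorem for a $C^2$ constraint, of which (\ref{con:invarPsi2})--(\ref{con:invarPsi1}) are the standard inward-pointing (Nagumo-type) conditions; the global diffeomorphism $G(x)=(\Phi(x),x_2,\dots,x_p)$, which carries $\mathcal{X}$ onto the half-space $\R_{\ge0}\times\R^{p-1}$, makes this reduction transparent.
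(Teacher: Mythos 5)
Your reduction to the scalar process $Y=\Phi(X)$ is exactly the paper's starting point, and your necessity argument is correct: the Girsanov tilt turns the drift of $Y$ into $\nabla\Phi\,\mu+\half\tr(\nabla^2\Phi\,\theta)+c\,q$ with $q=\nabla\Phi\,\theta\,\nabla\Phi^\top$, and your two-case analysis ($q(x_0)>0$ with $c$ sufficiently negative, or $q(x_0)=0$ and $c=0$) reproduces the contradiction of Proposition~\ref{prop:necinvar} with the curvature term included. The paper does not redo that argument: it observes that $X_t\in\mathcal{X}$ iff $(\Phi(X_t),X_{P\backslash\{1\},t})\in\R_{\geq0}\times\R^{p-1}$, computes the SDE of the transformed process by It\^o, and obtains \emph{both} directions at once from the equivalence in Proposition~\ref{prop:sufinvar} applied to that half-space.

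The genuine gap is in your main sufficiency argument. The estimates you need on $\{Y_s<0\}$, namely $q\leq C\,\Phi^2$ and $b\geq-C|\Phi|$, do not follow from the hypotheses and are false in precisely the examples this proposition is designed for. Conditions (\ref{con:invarPsi2})--(\ref{con:invarPsi1}) constrain $\mu,\sigma$ \emph{only on} $\partial\mathcal{X}$, and the coefficients are merely continuous, so $q=|\nabla\Phi\,\sigma|^2$ is merely continuous (your claim that $\nabla q$ vanishes on $\{\Phi=0\}$ already presupposes differentiability of $q$); moreover, $\theta$ is not assumed affine in this proposition, and even in the affine application it is affine only \emph{on} $\mathcal{X}$, whereas your bounds are needed outside $\mathcal{X}$. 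Concretely, take $p=2$, $f\equiv0$, $\mu=(b_1,0)^\top$ with $b_1\geq0$, and $\sigma(x)=\diag(\sqrt{|x_1|},1)$: the hypotheses hold and invariance of $\{x_1\geq0\}$ is true (CIR-type), but $q(x)=|x_1|=|\Phi(x)|$, which is not $O(\Phi^2)$ near the boundary. Your It\^o--Meyer identity then yields only $\E(Y_t^-)^2\leq C\int_0^t\E\,Y_s^-\,\dd s\leq C\int_0^t(\E(Y_s^-)^2)^{1/2}\dd s$, an Osgood-type inequality that does \emph{not} force $Y^-\equiv0$ (equality holds for $g(t)=C^2t^2/4$). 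In short, the square-root boundary degeneracy characteristic of affine diffusions is exactly the case your Gr\"onwall step cannot close; this is why the paper routes the problem through Proposition~\ref{prop:sufinvar}, whose proof (via \cite{tappe09}) rests on Yamada--Watanabe-type machinery rather than an $L^2$ estimate. Your closing one-line alternative --- transform by $G(x)=(\Phi(x),x_{P\backslash\{1\}})$ onto the half-space and invoke the invariance theorem there --- \emph{is} the paper's proof, and that is the route you should carry out in detail; as written it cannot serve merely as a fallback for an argument whose core estimate fails.
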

\begin{proof}
Let $(X,W)$ be a weak solution to (\ref{eq:SDE}) on some filtered
probability space $\filpspace$ with initial condition
$x_0\in\mathcal{X}$. It holds that $X_t\in\mathcal{X}$ if and only
if
$(\Phi(X_t),X_{P\backslash\{1\},t})\in\R_{\geq0}\times\R^{p-1}$.
It\^{o}'s formula gives
\begin{align*}
\dd \Phi(X_t) &= \nabla\Phi(X_t) \dd X_t +
\half\tr(\nabla^2\Phi(X_t)\dd \langle
X_t\rangle)\\
&= (\nabla\Phi(X_t) \mu(X_t)+
\half\tr(\nabla^2\Phi(X_t)\sigma(X_t)\sigma(X_t)^\top))\dd t \\&+
\nabla\Phi(x) \sigma(X_t)\dd W_t.
\end{align*}
Applying Proposition~\ref{prop:sufinvar} gives the result.
\end{proof}
\medskip

Note that if $\mathcal{X}$ is convex in
Proposition~\ref{prop:invargeneral}, equivalently $f$ is a convex
function, then $\nabla^2 f$ is positive semi-definite on its
domain, whence $\nabla^2 \Phi$ is negative semi-definite and it
follows that
\[
\tr(\nabla^2\Phi(X_t)\sigma(x)\sigma(x)^\top)=\tr(\sigma(x)^\top
\nabla^2\Phi(x)\sigma(x))\leq 0,\mbox{ for all }x\in\R^p.
\]
Thus condition~(\ref{con:invarPsi1}) is stronger than
(\ref{al:theta}), whence (\ref{al:theta}) and (\ref{al:mu}) are in
general not sufficient for stochastic invariance. For more results
we refer to \cite{Daprato07} and \cite{Milian95}.

We conclude this section by giving sufficient conditions for
stochastic invariance of an open set. The idea behind the proof of
the following result is taken from \cite{pfaffel09}.
\begin{prop}\label{prop:nablaphi}
Let $\Phi:\R^p\rightarrow\R$ be a $C^2$-function and suppose
$\mathcal{X}^\circ$ is a connected component (i.e.\ maximal
connected subset) of $\{\Phi>0\}$. Then $\mathcal{X}^\circ$ is
invariant for (\ref{eq:SDE}) if there exists an open neighborhood
$O$ of $\partial\mathcal{X}$ such that
\begin{align}\label{eq:nablaphi}
\nabla\Phi(x)\mu(x)\geq-\half\tr(\nabla^2\Phi(x)\theta(x))+\half\Phi(x)^{-1}\nabla\Phi(x)\theta(x)(\nabla\Phi(x))^\top,\end{align}
for all $x$ in $O\cap\mathcal{X}^\circ$, where we write
$\theta=\sigma\sigma^\top$.
\end{prop}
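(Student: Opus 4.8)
The plan is to turn the hypothesis into a supermartingale statement for $g:=-\log\Phi$ along the solution and to exploit that $g$ blows up at the boundary, so that the boundary cannot be reached. Let $(X,W)$ be a weak solution with $X_0=x_0\in\mathcal{X}^\circ$; such a solution exists and does not explode thanks to the linear growth condition~(\ref{eq:lingrow}). Since $\mathcal{X}^\circ$ is a connected component of the open set $\{\Phi>0\}$, the image of any path segment that starts in $\mathcal{X}^\circ$ and stays in $\{\Phi>0\}$ is connected and hence remains in $\mathcal{X}^\circ$; its topological boundary therefore lies in $\{\Phi=0\}$. Consequently it suffices to prove that $\tau:=\inf\{t\ge0:\Phi(X_t)=0\}=\infty$ almost surely. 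Setting $\rho_n:=\inf\{t:\|X_t\|\ge n\}$, non-explosion gives $\rho_n\uparrow\infty$ a.s., so I would reduce the goal to showing $\PP(\tau\le\rho_n)=0$ for each $n$.

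On $\{\Phi>0\}$ put $g=-\log\Phi$. Using It\^o's formula together with $\dd\langle\Phi(X)\rangle_t=\nabla\Phi(X_t)\theta(X_t)\nabla\Phi(X_t)^\top\dd t$, the drift of $g(X_t)$ equals
\[
-\tfrac{1}{\Phi}\big(\nabla\Phi\,\mu+\half\tr(\nabla^2\Phi\,\theta)\big)+\tfrac{1}{2\Phi^2}\nabla\Phi\,\theta\,\nabla\Phi^\top
\]
(evaluated at $X_t$). Rearranging~(\ref{eq:nablaphi}) as $\nabla\Phi\,\mu+\half\tr(\nabla^2\Phi\,\theta)\ge\half\Phi^{-1}\nabla\Phi\,\theta\,\nabla\Phi^\top$ and dividing by $\Phi>0$ shows precisely that this drift is $\le0$ on $O\cap\mathcal{X}^\circ$; moreover $g\to+\infty$ as $X\to\partial\mathcal{X}$.

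The main obstacle is that the drift is controlled only on the collar $O$, so I would localize. Fix $n$, write $\overline B_n=\{\|x\|\le n\}$ and $\mathcal{X}=\overline{\mathcal{X}^\circ}$. The set $\mathcal{X}\cap\overline B_n\cap O^{c}$ is compact and avoids $\{\Phi=0\}$, because $\{\Phi=0\}\cap\mathcal{X}\subset\partial\mathcal{X}\subset O$; hence $\Phi\ge\delta_n>0$ there, and by continuity all terms in the drift expression are bounded on this set. Combining with the sign on $O$, the drift of $g(X)$ is then bounded above by some constant $c_n\ge0$ on all of $\mathcal{X}^\circ\cap\overline B_n$. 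The delicate points here are exactly this compactness step (that $O$ covers the portion of $\{\Phi=0\}$ relevant to $\overline B_n$) and, below, the verification that the stochastic integral is a genuine martingale.

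For the conclusion I would use optional stopping. For $a>0$ let $T_a=\inf\{t:\Phi(X_t)\le a\}$. On $[0,T_a\wedge\rho_n]$ the process stays in $\overline B_n$ with $\Phi\ge a$, so $g(X)$ is bounded, the martingale part is a true martingale, and $s\mapsto g(X_{s\wedge T_a\wedge\rho_n})-c_n(s\wedge T_a\wedge\rho_n)$ is a supermartingale. Optional stopping at a fixed time $t$ gives $\E\,g(X_{t\wedge T_a\wedge\rho_n})\le g(x_0)+c_n t$. With $B_n=\max_{\mathcal{X}\cap\overline B_n}\Phi$ one has $g\ge-\log B_n$ on $\overline B_n$, while $g(X_{T_a})=-\log a$ on $\{T_a<t\wedge\rho_n\}$, so
\[
(-\log a)\,\PP(T_a<t\wedge\rho_n)\le g(x_0)+c_n t+|\log B_n|.
\]
Letting $a\downarrow0$ (the right-hand side stays fixed while $-\log a\uparrow\infty$) forces $\PP(T_a<t\wedge\rho_n)\to0$; since $T_a\uparrow\tau$, this yields $\PP(\tau\le t\wedge\rho_n)=0$. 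Letting $t\uparrow\infty$ and then $n\uparrow\infty$, using $\rho_n\uparrow\infty$, gives $\PP(\tau<\infty)=0$, i.e.\ $X$ stays in $\mathcal{X}^\circ$ for all $t$, which proves the invariance.
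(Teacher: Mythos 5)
Your proof is correct, and it rests on the same core computation as the paper's: apply It\^o's formula to $\log\Phi(X_t)$ (you use $g=-\log\Phi$, a sign change only) and observe that hypothesis~(\ref{eq:nablaphi}) gives the drift of $\log\Phi(X)$ a favourable sign on the collar $O\cap\mathcal{X}^\circ$. Where you genuinely diverge is in how the boundary is excluded. The paper argues by contradiction on path behaviour: on the event $\{\tau_0<\infty\}$ the drift integral of $\log\Phi(X)$ cannot tend to $-\infty$ (by the sign condition near the boundary and non-explosion), so the local martingale part would have to tend to $-\infty$, which is impossible for a continuous local martingale --- a fact the paper borrows from the argument of Proposition~3.4 in \cite{pfaffel09} (essentially a Dambis--Dubins--Schwarz oscillation argument). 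You instead make the argument quantitative: you localize with $\rho_n$ and $T_a$, use compactness of $\mathcal{X}\cap\overline{B}_n\cap O^c$ to upgrade the one-sided drift control on $O$ to a uniform upper bound $c_n$ on all of $\mathcal{X}^\circ\cap\overline{B}_n$, and then run an optional-stopping/supermartingale estimate that yields $(-\log a)\,\PP(T_a<t\wedge\rho_n)\leq g(x_0)+c_nt+|\log B_n|$, forcing $\PP(T_a<t\wedge\rho_n)\to0$ as $a\downarrow 0$. Your route is longer but self-contained and elementary (no appeal to properties of the asymptotics of continuous local martingales), and it produces an explicit bound on the probability of approaching the boundary; the paper's route is shorter at the price of invoking the cited external result. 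One shared caveat, not a gap specific to you: both proofs read the hypothesis as saying that $O$ covers the \emph{entire} topological boundary $\overline{\mathcal{X}^\circ}\setminus\mathcal{X}^\circ=\{\Phi=0\}\cap\overline{\mathcal{X}^\circ}$ (you use this in the compactness step, the paper uses it to place $X_t$ in $O$ just before $\tau_0$), and your final ``$\PP(\tau\leq t\wedge\rho_n)=0$'' should strictly be ``$\PP(\tau< t\wedge\rho_n)=0$'', which is all the countable-union argument needs.
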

\begin{proof}
Let $(X,W)$ be a weak solution to (\ref{eq:SDE}) on some filtered
probability space $\filpspace$ with initial condition
$x_0\in\mathcal{X}^\circ$. By It\^{o}'s formula it holds for
$t<\tau_0:=\inf\{s\geq0:\Phi(X_s)=0\}$ that
\begin{align*}
\dd\log\Phi(X_t)&=\Phi(X_t)^{-1}\dd\Phi(X_t)-\half\Phi(X_t)^{-2}\dd\langle\Phi(X)\rangle_t\\
&=\Phi(X_t)^{-1} (f(X_t)\dd t+\sigma(X_t)\dd W_t),
\end{align*}
where
\[
f(x)=\nabla\Phi(x)\mu(x)+\half(\tr(\nabla^2\Phi(x)\theta(x))-\Phi(x)^{-1}\nabla\Phi(x)\theta(x)(\nabla\Phi(x))^\top).
\]
Suppose $A:=\{\tau_0<\infty\}$ has positive probability. Since $X$
does not explode, it holds for each $\omega\in A$ that there
exists $\varepsilon>0$ such that $X_t(\omega)\in
O\cap\mathcal{X}^\circ$ for
$\tau_0(\omega)-\varepsilon<t<\tau_0(\omega)$, whence
$f(X_t(\omega))\geq 0$. Therefore, $\int_0^t \Phi(X_s)^{-1}
f(X_s)\dd s$ does not tend to $-\infty$ on $A$ for
$t\uparrow\tau_0$. This yields
\[
\lim_{t\uparrow\tau_0}\int_0^t \Phi(X_s)^{-1}\sigma(X_s)\dd
W_s=-\infty,
\]
which is impossible, by the arguments of
\cite[Proposition~3.4]{pfaffel09}.
\end{proof}
\begin{remark}
Note that (\ref{eq:nablaphi}) implies (\ref{con:invarPsi2}) and is
stronger than (\ref{con:invarPsi1}), as $\theta=\sigma\sigma^\top$
is positive semi-definite.
\end{remark}
\medskip

Proposition~\ref{prop:nablaphi} yields tractable conditions for
stochastic invariance of an open set in case of an affine SDE.
\begin{prop}\label{prop:nablaphi2}
Consider the situation of Proposition~\ref{prop:nablaphi} and
suppose $\mu$ and $\theta:=\sigma\sigma^\top$ are affine functions
given by (\ref{eq:affinemusigma}). Then $\mathcal{X}^\circ$ is
invariant for (\ref{eq:SDE}) if
\begin{align}
\nabla\Phi(x) \theta(x)&=\Phi(x) v^\top,\,\mbox{ for some constant $v\in\R^p$},\label{al:phiv}\\
\nabla\Phi(x)(\mu(x)-\half\sum_{i=1}^p (A^i)^i)&\geq 0,\,\mbox{
for all }x\in \mathcal{X}\label{al:phiv2},
\end{align}
where $(A^i)^i$ denotes the $i$-th column of the matrix $A^i$.
%\[
%\nabla\Phi(x)\theta(x)=\Phi(x)v^\top,
%\]
%for some constant vector $v\in\R^p$. Then (\ref{eq:nablaphi})
%reduces to
%\[
%\nabla\Phi(x)(\mu(x)-\half\sum_{i=1}^p (A^i)^i)\geq 0,\mbox{ for
%all }x\in\mathcal{X},
%\]
%with $(A^i)^i$ the $i$-th column of the matrix $A^i$.
\end{prop}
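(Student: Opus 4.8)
The plan is to verify that the hypotheses (\ref{al:phiv}) and (\ref{al:phiv2}) imply condition (\ref{eq:nablaphi}) of Proposition~\ref{prop:nablaphi}, so that the invariance of $\mathcal{X}^\circ$ follows at once from that proposition. The only genuinely delicate feature of (\ref{eq:nablaphi}) is the factor $\Phi(x)^{-1}$, which blows up as $x$ approaches $\partial\mathcal{X}$; the first thing I would do is show that (\ref{al:phiv}) removes this singularity. Substituting $\nabla\Phi(x)\theta(x)=\Phi(x)v^\top$ into the quadratic term gives $\Phi(x)^{-1}\nabla\Phi(x)\theta(x)(\nabla\Phi(x))^\top=\Phi(x)^{-1}\Phi(x)\,v^\top(\nabla\Phi(x))^\top=\nabla\Phi(x)v$, a bounded affine expression. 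Hence the right-hand side of (\ref{eq:nablaphi}) collapses to $-\half\tr(\nabla^2\Phi(x)\theta(x))+\half\nabla\Phi(x)v$.

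Next I would convert the trace term into an expression linear in $\nabla\Phi$ by differentiating the identity (\ref{al:phiv}). Reading (\ref{al:phiv}) as the $p$ scalar identities $\sum_i\partial_i\Phi(x)\,\theta_{ij}(x)=\Phi(x)v_j$ holding on an open set, I apply $\partial_j$ and sum over $j$. On the left the product rule yields $\sum_{i,j}\partial_i\partial_j\Phi\,\theta_{ij}+\sum_{i,j}\partial_i\Phi\,\partial_j\theta_{ij}$; the first sum is exactly $\tr(\nabla^2\Phi\,\theta)$, while the affine form $\theta_{ij}(x)=A^0_{ij}+\sum_k A^k_{ij}x_k$ gives $\partial_j\theta_{ij}=A^j_{ij}$, so the second sum equals $\nabla\Phi(x)\sum_i(A^i)^i$ once one recognizes that the $i$-th entry of $\sum_j(A^j)^j$ is $\sum_j A^j_{ij}$. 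On the right, since $v$ is constant, $\sum_j\partial_j(\Phi v_j)=\nabla\Phi(x)v$. This produces the key identity $\tr(\nabla^2\Phi(x)\theta(x))=\nabla\Phi(x)\bigl(v-\sum_i(A^i)^i\bigr)$.

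Finally I would insert this identity into the simplified right-hand side: the two $\half\nabla\Phi(x)v$ contributions cancel, leaving $\half\nabla\Phi(x)\sum_i(A^i)^i$, so (\ref{eq:nablaphi}) is equivalent to $\nabla\Phi(x)\mu(x)\geq\half\nabla\Phi(x)\sum_i(A^i)^i$, which is precisely (\ref{al:phiv2}). Since (\ref{al:phiv2}) is assumed on all of $\mathcal{X}$, and in particular on the relevant set $O\cap\mathcal{X}^\circ$, Proposition~\ref{prop:nablaphi} applies and $\mathcal{X}^\circ$ is invariant.

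I expect the main obstacle to be the bookkeeping in the differentiation step: justifying that (\ref{al:phiv}) may be differentiated (it must be understood as an identity on an open neighborhood, not merely on $\partial\mathcal{X}$) and correctly tracking the affine structure of $\theta$ so that the divergence of $\nabla\Phi\,\theta$ produces exactly the column-sum $\sum_i(A^i)^i$. The cancellation of the $\Phi^{-1}$ singularity via (\ref{al:phiv}) is the conceptual heart of the argument, but it is immediate once the substitution has been carried out.
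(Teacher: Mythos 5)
Your proof is correct and follows essentially the same route as the paper's: both differentiate the identity $\nabla\Phi(x)\theta(x)=\Phi(x)v^\top$, use the product rule together with the affine structure of $\theta$ to identify the divergence term $\nabla\Phi(x)\sum_{i=1}^p(A^i)^i$, substitute $\Phi(x)^{-1}\nabla\Phi(x)\theta(x)(\nabla\Phi(x))^\top=\nabla\Phi(x)v$, and thereby reduce condition (\ref{eq:nablaphi}) of Proposition~\ref{prop:nablaphi} exactly to (\ref{al:phiv2}). The only difference is presentational: you take the divergence componentwise and sum, whereas the paper differentiates in gradient notation and then takes the trace.
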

\begin{proof}
In view of Proposition~\ref{prop:nablaphi} it suffices to show
that
\[
\tr(\nabla^2\Phi(x)\theta(x))-\Phi(x)^{-1}\nabla\Phi(x)\theta(x)(\nabla\Phi(x))^\top=-\nabla\Phi(x)\sum_{i=1}^p
(A^i)^i,
\]
under the assumption that $\nabla\Phi(x) \theta(x)=\Phi(x)v^\top$,
for some constant vector $v\in\R^p$. Differentiating the right-
respectively the left-hand side of the latter and applying the
product rule yields
\[
\nabla^\top\Phi(x)v^\top=\nabla^\top (\nabla\Phi(x) \theta(x))=
\nabla^2\Phi(x)\theta(x)+\nabla\Phi(x)\nabla^\top\theta(x),
\]
where $\nabla\Phi(x)\nabla^\top\theta(x)$ is short-hand notation
for the matrix with row vectors
$\nabla\Phi(x)\displaystyle\frac{\partial}{\partial
x_i}\theta(x)$. Hence
\[
\tr(\nabla^2\Phi(x)\theta(x))=\nabla\Phi(x)v-\tr(\nabla\Phi(x)
\nabla^\top\theta(x)).
\]
The result follows, since $\nabla\Phi(x)
v=\Phi(x)^{-1}\nabla\Phi(x)\theta(x)(\nabla\Phi(x))^\top$ and
\[
\tr(\nabla\Phi(x)
\nabla^\top\theta(x))=\sum_{i=1}^p\nabla\Phi(x)\frac{\partial\theta^i(x)}{\partial
x_i}=\nabla\Phi(x)\sum_{i=1}^p (A^i)^i.
\]
\end{proof}
%The above proposition can be used to show invariance of
%$\mathcal{X}^\circ=\{\theta>0\}$. Since $\{\theta>0\}$ is a
%connected component of $\{\det\theta>0\}$ and $\det\theta$ is a
%polynomial, we can choose $\Phi$ equal to the polynomial of
%minimal degree such that $\mathcal{X}^\circ$ is a connected
%component of $\{\Phi>0\}$. This provides us a way to obtain unique
%strong solutions for affine SDEs. Indeed, the function
%$\sigma=|\theta|^{1/2}$ is Lipschitz continuous on $\{\theta>0\}$,
%whence the solution to the associated SDE is unique and strong on
%$\{\theta>0\}$. We use this argument in
%Section~\ref{sec:quadratic} to obtain a unique strong solution for
%an affine SDE where the state space is a cone.
\section{Polyhedral state space}\label{sec:polyhedral}
\subsection{General diffusion matrix} Throughout this section, we
assume $\mathcal{X}$ is a polyhedron given by
\begin{align}\label{eq:convexpolyX}
\mathcal{X}=\bigcap_{i=1}^q\{u_i\geq0\},
\end{align}
for some affine function $u:\R^q\rightarrow\R^p:x\mapsto \gamma x
+\delta$, $\gamma\in\R^{q\times p}$, $\delta\in\R^q$, some
$q\in\N$. We write $Q=\{1,\ldots,q\}$ and assume $Q$ is
\emph{minimal} in the sense that $\bigcap_{i\in
Q'}\{u_i\geq0\}\neq\mathcal{X}$ for all $Q'\subset Q$ with $Q'\neq
Q$. In addition we assume $\mu$ is of the form (\ref{eq:affinemu})
and we are given an affine function $\theta$ by
\begin{align}\label{eq:affinetheta}
\theta:\R^p\rightarrow\R^{p\times p}:x\mapsto A^0+\sum_{i=1}^p A^i
x_i,
\end{align}
for some symmetric $A^i\in\R^{p\times p}$ and we assume
\begin{align}\label{eq:XinD}
\emptyset\neq\mathcal{X}^\circ\subset\mathcal{X}\subset\{x\in\R^p:\theta(x)\geq0\}=:\mathcal{D}.
\end{align}
We will often make use of the fact that for a symmetric matrix $S$
and vector $v$ it holds that $Sv=0$ is equivalent with $v^\top Sv
=0$. In particular if $\theta=\sigma\sigma^\top$ on $\mathcal{X}$,
then (\ref{al:theta}) is equivalent to
\[
\forall i\in I,\forall x\in\XXi: \gamma_i\theta(x)=0.
\]

In Section~\ref{sec:stochinvar} we have derived necessary boundary
conditions on $\mu$ and $\theta$ to have $\mathcal{X}$
stochastically invariant. We show in this subsection that for a
polyhedral state space, these conditions are also sufficient for
the existence of a square root $\sigma$ of $\theta$ on
$\mathcal{X}$ such that (\ref{eq:SDE}) is an affine SDE. In the
next proposition we prove that $\theta$ can be transformed in a
block-diagonal form, which we use in the proof of
Theorem~\ref{th:constructsigma} to construct the square root
$\sigma$. The proposition improves upon the results given in the
appendix of \cite{dk96} and generalizes \cite[Lemma~7.1]{fm09}.
%\begin{lemma}\label{lem:Avis0}
%Let $A\in\R^{p\times p}$ be a semi-positive definite matrix and
%$v\in\R^{1\times p}$ a row vector. Then $v A v^\top=0$ if and only
%if $v A=0$.
%\end{lemma}
%\begin{proof}
%By positive semi-definiteness of $A$ there exists a square root
%$\sqrt{A}$. We can write
%\[
%v^\top A v=v^\top \sqrt{A}(\sqrt{A})^\top v=\sum_{i=1}^p (v^\top
%\sqrt{A})_i^2.
%\]
%Since $(v^\top A)^\top=A v$ by symmetry of $A$, it follows that
%\[
%v^\top A v=0\mbox{ iff } v^\top\sqrt{A} = 0\mbox{ iff } v^\top A =
%0\mbox{ iff } A v = 0.
%\]
%\end{proof}
\begin{prop}\label{prop:LthetaLT}
Let $\mathcal{X}\subset\R^p$ be a convex polyhedron given by
(\ref{eq:convexpolyX}) and satisfying (\ref{eq:XinD}) and let
$\theta$ be given by (\ref{eq:affinetheta}). Assume
\begin{align}\label{eq:cond}
\forall i\in Q,\forall x\in\XXi: \gamma_i\theta(x)=0.
\end{align}
Then there exists a non-singular $L\in\R^{p\times p}$ and a vector
$\ell\in\R^p$ such that
\begin{align}\label{eq:LthetaLT}
L\theta(L^{-1}(x-\ell))L^\top=\begin{pmatrix}
    \diag(x_M,0_N) & 0 \\
    0 & \Psi(x_{M\cup N})
  \end{pmatrix},
\end{align}
for some index sets $M=\{1,\ldots,m\}$, $N=\{m+1,\ldots,m+n\}$ and
affine function $\Psi$. In addition, we have
\begin{align}\label{eq:statespace}
L\mathcal{X}+\ell=\R^m_{\geq 0} \times \mathcal{C} \times
\R^{p-m-n},
\end{align}
for some convex polyhedron
$\mathcal{C}=\bigcap_{i=1}^{q-m}\{\wu_i\geq0\}\subset\R^n_{\geq0}$
with $\wu_i(x)=x_i$ for $i\leq n$.
\end{prop}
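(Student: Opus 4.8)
The plan is to extract from the boundary condition~(\ref{eq:cond}) a rigid pointwise identity for $\theta$, combine it with positive semidefiniteness to split the facets into two types, and then read off $L$ and $\ell$ directly from the facet normals. First I would record the basic identity. Since $\mathcal{X}^\circ\neq\emptyset$, the polyhedron is $p$-dimensional, and by minimality of $Q$ each boundary segment $\XXi$ is a facet that affinely spans the hyperplane $\{u_i=0\}$ (this and the other convex-geometric facts I use are the ones collected in the appendix). The map $x\mapsto\gamma_i\theta(x)$ is affine and vanishes on $\XXi$ by~(\ref{eq:cond}), hence on all of $\{u_i=0\}$; an affine function vanishing on that hyperplane is a constant multiple of $u_i$, so there is a vector $c_i\in\R^p$ with
\[ \gamma_i\theta(x)=u_i(x)\,c_i^\top,\qquad x\in\R^p. \]

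Next I would classify the facets. Transposing (using symmetry of $\theta$) and invoking the stated fact that $Sv=0\iff v^\top Sv=0$ for symmetric $S\geq0$, the scalar $\gamma_i\theta(x)\gamma_i^\top=u_i(x)\,\gamma_i c_i$ is nonnegative on $\mathcal{X}$, so $\gamma_i c_i\geq0$. This gives a dichotomy: \emph{degenerate} facets, with $\gamma_i c_i=0$, for which $\theta(x)\gamma_i^\top\equiv0$ and thus $\gamma_i^\top\in\bigcap_k\ker A^k$; and \emph{square-root (CIR) type} facets, with $\gamma_i c_i>0$. Comparing $\gamma_i\theta(x)\gamma_j^\top$ with its transpose for two distinct facets and matching the two resulting affine functions of $x$ yields the decoupling relations $\gamma_j c_i=0$ for $i\neq j$; minimality forces distinct square-root facets to have linearly independent normals (they can be neither parallel nor opposite), and a short computation with $w^\top\theta(x)w=\sum_i\alpha_i^2 u_i(x)\,\gamma_i c_i$ shows $V_{\mathrm{CIR}}\cap V_{\mathrm{deg}}=\{0\}$, where $V_{\mathrm{CIR}}$ and $V_{\mathrm{deg}}$ denote the spans of the square-root resp.\ degenerate normals. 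Hence $m=\dim V_{\mathrm{CIR}}$ equals the number of square-root facets and $n=\dim V_{\mathrm{deg}}$, with $m+n=\dim\mathrm{span}\{\gamma_i^\top\}=:\dim W$.

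I would then build $L$ row by row: the square-root normals, each scaled by $1/(\gamma_i c_i)$, as the first $m$ rows; a basis of degenerate normals as the next $n$ rows; and $p-m-n$ further rows completing a basis of $\R^p$. The shift $\ell$ is chosen so that each selected coordinate equals the corresponding $u_i$. With this, the decoupling relations make the square-root block diagonal with $(i,i)$ entry $\tilde x_i$, while the degenerate rows annihilate $\theta$, so their rows and columns of $L\theta(\cdot)L^\top$ vanish, producing the $0_N$ block and killing every coupling to them. The delicate point — and I expect the main obstacle — is the choice of the $p-m-n$ free rows. The naive choice of lineality directions $W^\perp$ does \emph{not} in general decouple the square-root coordinates from the free block, since $c_i$ need not be orthogonal to $W^\perp$ (an explicit rank-one $\theta$ on a halfplane in $\R^2$ already shows this). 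Instead I would take the free rows inside $C^\perp$ with $C=\mathrm{span}\{c_i\}$, which annihilates the cross terms $\gamma_i\theta(x)e^\top=u_i(x)(e c_i)$; such a completion of the basis exists precisely because $C\cap W^\perp=\{0\}$, a consequence of the decoupling relations together with $\gamma_i c_i>0$.

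Finally I would identify the residual block and the state space. Because every facet is square-root or degenerate and involves only its own coordinate resp.\ the degenerate coordinates, no constraint touches the free coordinates, so $L\mathcal{X}+\ell=\R^m_{\geq0}\times\mathcal{C}\times\R^{p-m-n}$ with $\mathcal{C}\subset\R^n_{\geq0}$ carved out by the $q-m$ degenerate constraints (the $n$ basis ones being $\tilde x_i\geq0$). Since $L\theta(\cdot)L^\top\geq0$ for all values of the free coordinates, which range over all of $\R$, the coefficient matrix of each free coordinate in the affine family must vanish; hence $L\theta(L^{-1}(\cdot-\ell))L^\top$ depends only on $x_{M\cup N}$, its cross block to the free part is zero, and its free–free block is the desired affine $\Psi(x_{M\cup N})$, completing~(\ref{eq:LthetaLT}) and~(\ref{eq:statespace}).
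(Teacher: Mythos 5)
Your proposal is correct and takes essentially the same route as the paper's own proof: your pointwise identity $\gamma_i\theta(x)=u_i(x)\,c_i^\top$ is the paper's Step~1 (your $c_i^\top$ is the paper's $B_i$, obtained there via Proposition~\ref{prop:uiscv}), your degenerate/square-root dichotomy together with the decoupling relations $\gamma_j c_i=0$ reproduces (\ref{con:B1})--(\ref{con:B3}), your choice of the free rows inside $C^\perp$ is exactly the paper's completion $\eta$ inside $[B_M]^\bot$ in Steps~2--4 (your condition $C\cap W^\perp=\{0\}$ is the paper's decomposition $\R^{1\times p}=[\gamma_M]\oplus[B_M]^\bot$), and the final reduction of $\Psi$ to an affine function of $x_{M\cup N}$ is argued the same way from positive semidefiniteness along the unconstrained directions. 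The only blemish is your parenthetical claim that linear independence of the square-root normals follows from their being pairwise non-parallel (which by itself is insufficient); it actually follows from your own decoupling relations --- multiply $\sum_i\alpha_i\gamma_i^\top=0$ by $c_j$ and use $\gamma_j c_j>0$, or evaluate your $w^\top\theta(x)w$ identity with $w=0$ at an interior point --- which is precisely how the paper argues via the nonsingular diagonal matrix $B_M\gamma_M^\top$ in (\ref{eq:bmgm}).
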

\begin{proof} We divide the proof into a couple of steps.

\emph{Step 1.} There exists $B\in\R^{q\times p}$ such that for
$i\in Q$ it holds that
\begin{align}
\gamma_i\theta(x)&=B_i u_i(x),\label{con:B1}\\
B_i\gamma_i^\top&>0, \mbox{ if }B_i\not=0\label{con:B2}\\
B_i\gamma_j^\top&=0,\mbox{ for $j\in
Q\backslash\{i\}$}.\label{con:B3}
\end{align}
This is shown as follows. Fix $i\in Q$. By (\ref{eq:cond}) and
Proposition~\ref{prop:uiscv} there exists $B_i\in\R^{1\times p}$
such that $\gamma_i\theta^j(x)=B_{ij}u_i(x)$ for $j\in P$. By
assumption there exists $x_0\in\mathcal{X}^\circ$. It holds that
$u_i(x_0)>0$, so we can write
\begin{align}\label{eq:Bi}
B_i = u_i(x_0)^{-1}\gamma_i\theta(x_0).
\end{align}
By positive semi-definiteness of $\theta(x_0)$, it holds that
$B_i\gamma_i^\top\geq0$. We have $B_i\gamma_i^\top=0$ if and only
if $\gamma_i\theta(x_0)=0$, i.e.\ $B_i=0$. This yields
(\ref{con:B2}). Moreover, if $j\in Q\backslash\{i\}$, then by
symmetry of $\theta$ it holds that
\[
B_i\gamma_j^\top
u_i(x)=\gamma_i\theta(x)\gamma_j^\top=\gamma_j\theta(x)\gamma_i^\top=B_j\gamma_i^\top
u_j(x).
\]
This implies $B_i\gamma_j^\top=0$, since $Q$ is minimal.

\emph{Step 2.} It holds that
\begin{align}
\rank\gamma_M&=\rank B_M = m\label{con:M1}\\
\R^{1\times p}&=[\gamma_M]\oplus[B_M]^\bot\label{con:M2},
 %\dim([\gamma_M]\oplus[B_M]^\bot)&
%=p\label{con:M2}
\end{align}
where
\begin{align}
M=\{i\in Q:B_i\not = 0\}\label{eq:defM}.
\end{align}
Indeed, by (\ref{con:B3}) we have
\begin{align}\label{eq:bmgm}
B_M\gamma_M^\top=\diag(B_1\gamma_1^\top,\ldots,B_m\gamma_m^\top),
\end{align}
which has full rank $m$ by (\ref{con:B2}). This implies
(\ref{con:M1}) as well as $[\gamma_M]\cap[B_M]^\bot=\{0\}$, which
yields (\ref{con:M2}).

\emph{Step 3.} It holds that
\begin{align}
[B_M]^\bot&=[\gamma_N]\oplus[\eta],\label{con:step3}\\
[\gamma_{Q\backslash M}]&=[\gamma_N],
\end{align}
where $N\subset Q\backslash M$ is such that $\gamma_{M\cup N}$ has
full rank, $\rank\gamma_{M\cup N}=\rank\gamma$, for some
$\eta\in\R^{(p-m-n)\times p}$, with $n=\# N$. The first equality
follows immediately from (\ref{con:M1}) and (\ref{con:M2}), while
the second holds since $[\gamma_{Q\backslash M}]\subset[B_M]^\bot$
by (\ref{con:B3}) and $[\gamma_{M\cup N} ]=[\gamma]$.

\emph{Step 4.} Permute indices such that $M=\{1,\ldots,m\}$,
$N=\{m+1,\ldots,m+n\}$. We define $L\in\R^{p\times p}$ by
\begin{align}
L_{M\cup N}&=\gamma_{M\cup N}\label{eq:propertyL}\\
L_{P\backslash (M\cup N)}&= \eta\label{eq:propertyL2}.
\end{align}
Then $L$ is non-singular, by (\ref{con:M2}) and (\ref{con:step3}).
Moreover, the previous steps yield
\[
 L\theta(x)L^\top =
  \begin{pmatrix}
    \diag(c_1u_1(x),\ldots,c_m u_m(x),0_N) & 0 \\
    0 & \Phi(x)
  \end{pmatrix},
\]
with $c_i=B_i\gamma_i^\top$ and $\Phi$ defined by $\Phi(x)=
\eta\theta(x)\eta^\top$.
%Indeed, by (\ref{eq:propertyL}), (\ref{con:B1}) and
%(\ref{eq:defM}) we have
%\[
%L_{M\cup N} \theta(x)=\gamma_{M\cup N}\theta(x)=\diag(u_{M\cup N}
%(x))B_{M\cup N}=
%  \begin{pmatrix}
%    \diag(u_M(x)) B_M  \\
%    0_N
%  \end{pmatrix},
%\]
%for $x\in\mathcal{X}$. Now (\ref{con:step3}) gives
%$L_M\theta(x)L_{P\backslash M}^\top=0$ and (\ref{eq:bmgm}) yields
%\[
%L_M\theta(x)L_M^\top=\diag(c_1u_1(x),\ldots,c_m u_m(x)),
%\]
%with $c_i=B_i\gamma_i^\top$, for $x\in\mathcal{X}$. By
%(\ref{con:B2}) it holds that $c_i>0$.
It holds that $c_i>0$ by (\ref{con:B2}). By rescaling $u_i$ we may
assume $c_i=1$. Then we take $\ell\in\R^p$ such that $\ell_{M\cup
N }=\delta_{M\cup N}$.

\emph{Step 5.} It remains to show that we can write $\Phi(x)$ as
an affine function of $u_{M\cup N}(x)$. This is an immediate
consequence of the assumption that
$\mathcal{X}\subset\{\theta\geq0\}$, as this yields that
$\theta(x)$ only depends on $u(x)$, which is a function of
$u_{M\cup N}(x)$.
%This follows by applying the coordinate transformation
%\begin{align}\label{eq:ycoord}
%y:\R^p\rightarrow\R^p:x\mapsto L x +\ell,
%\end{align}
%with $\ell\in\R^p$ such that $\ell_{M\cup N}=\delta_{M\cup N}$.
%Then
%\begin{align*}
%\Phi(x)=B^0+\sum_{i=1}^p B^i y_i(x),\quad x\in\R^p,
%\end{align*}
%for some $B^i\in\R^{(p-m-n)\times (p-m-n)}$, $i\in P\cup\{0\}$.
%Since $y_{M\cup N}=u_{M\cup N}$, it is enough to show that $B^i=0$
%for $i\in P\backslash(M\cup N)$.
%
%Fix $i\in P\backslash(M\cup N)$ and let $x_0\in\mathcal{X}$. There
%exists $\xi\in\R^p$ such that $L_j\xi=0$ for all $j\in
%P\backslash\{i\}$ and $L_i\xi\not=0$. Since $\gamma_{M\cup
%N}=L_{M\cup N}$ and $[\gamma_{P\backslash(M\cup N)}]=[\gamma_N]$,
%it holds for all $k\in\R$ that $x_0+k\xi\in\mathcal{X}$, so that
%\[
%0\leq\Phi(x_0+k\xi)=B^0+\sum_{j=1}^p B^j y_j(x_0)+k B^i
%L_i\xi,\mbox{ for all }k\in\R.
%\]
%Since $L_i\xi\not=0$, it follows that $B^i=0$, as we needed to
%show.
\end{proof}
\begin{remark}
For the canonical state space as treated in \cite{dfs03} and
\cite{fm09} one has (\ref{eq:statespace}) with
$\mathcal{C}=\R^n_{\geq0}$ and
\[
\Psi(x_{M\cup N})=B^0+\sum_{i=1}^{m+n}B^i x_i,
\]
with $B^i$ positive semi-definite.
\end{remark}
\begin{theorem}\label{th:constructsigma}
Let
%$\mu$ be given by (\ref{eq:affinemu}), $\theta$ by
%(\ref{eq:affinetheta}) and
 $\mathcal{X}\subset\R^p$ be given by
(\ref{eq:convexpolyX}) and satisfying (\ref{eq:XinD}). There
exists an affine SDE with drift $\mu$, diffusion matrix $\theta$
and polyhedral state space $\mathcal{X}$ if and only if
\begin{align}
\forall i\in Q,\forall x\in\XXi: \gamma_i\theta(x)&=0,\label{al:cond1}\\
\forall i\in Q,\forall
x\in\XXi:\gamma_i\mu(x)&\geq0\label{al:cond2}.
\end{align}
\end{theorem}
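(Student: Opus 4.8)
The statement is an equivalence, and I would treat the two implications separately. The necessity (``only if'') direction follows directly from Proposition~\ref{prop:necinvar}: if an affine SDE with these data exists, then $\mathcal{X}$ is stochastically invariant and $\theta=\sigma\sigma^\top$ on $\mathcal{X}$, so (\ref{al:theta}) and (\ref{al:mu}) hold on each $\XXi$. Since $\theta$ is symmetric and $Sv=0\iff v^\top Sv=0$ for symmetric $S$, the identity $\gamma_i\sigma(x)=0$ is equivalent to $\gamma_i\theta(x)\gamma_i^\top=0$ and hence to $\gamma_i\theta(x)=0$, which is (\ref{al:cond1}); and (\ref{al:cond2}) is (\ref{al:mu}) verbatim.

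For sufficiency the plan is to pass to the canonical coordinates furnished by Proposition~\ref{prop:LthetaLT}, build a square root there, and transport it back. Condition (\ref{al:cond1}) is exactly the hypothesis (\ref{eq:cond}) of that proposition, so I obtain a non-singular $L$ and $\ell$ bringing $\theta$ into the block form (\ref{eq:LthetaLT}) and $\mathcal{X}$ into $\R^m_{\geq0}\times\mathcal{C}\times\R^{p-m-n}$. By the affine-change-of-variables remark preceding Remark~\ref{rem:generator}, $X$ is an affine diffusion iff $\wX=LX+\ell$ is one, so it suffices to construct the SDE for the transformed data $\wmu(x)=L\mu(L^{-1}(x-\ell))$, again affine, and $\wtheta=L\theta(L^{-1}(\cdot-\ell))L^\top$. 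On these coordinates I would set
\[
\wsigma(x)=\begin{pmatrix}
\diag\bigl(\sqrt{|x_1|},\ldots,\sqrt{|x_m|},0_N\bigr) & 0\\
0 & |\Psi(x_{M\cup N})|^{1/2}
\end{pmatrix},
\]
which is continuous on all of $\R^p$ because the scalar roots are continuous and $A\mapsto|A|^{1/2}$ is continuous by \cite[Theorem~X.1.1]{bhatia}, satisfies the linear growth bound (\ref{eq:lingrow}), and obeys $\wsigma\wsigma^\top=\wtheta$ on the state space, where $\Psi\geq0$ so that $|\Psi|^{1/2}(|\Psi|^{1/2})^\top=\Psi$.

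It then remains to verify the three defining properties of an affine SDE for $(\wmu,\wsigma)$. Affinity is built in, and weak existence for every initial condition follows from continuity and linear growth via Theorems~IV.2.3 and~IV.2.4 of \cite{Ikwat}. For stochastic invariance of $\R^m_{\geq0}\times\mathcal{C}\times\R^{p-m-n}$ I would invoke the sufficient conditions of Section~\ref{sec:stochinvar}: the block form of $\wsigma$ forces the diffusion to vanish along every bounding hyperplane, while the transported (\ref{al:cond2}) supplies the inward drift. Concretely the $\R^m_{\geq0}$ coordinates are covered by Corollary~\ref{cor:stochinvarcan}, and the remaining facets of $\mathcal{C}$ lie in the $N$-coordinates, which carry zero diffusion, so their invariance reduces to the inward-drift condition alone.

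The hard part is pathwise uniqueness, which by \cite[Theorem~21.14]{Kallenberg} upgrades weak existence to a unique strong solution. Splitting $\wX=(\wY,\wZ)$ with $\wY=\wX_{M\cup N}$, two solutions driven by the same $W$ yield a diagonal, $\tfrac12$-Hölder diffusion $\sqrt{\wY_j}-\sqrt{\wY'_j}$ ($j\le m$, zero for $j\in N$) for $\wY-\wY'$, and a diffusion $\Psi(\wY)^{1/2}-\Psi(\wY')^{1/2}$ for $\wZ-\wZ'$. I would close a single Gronwall loop for $t\mapsto\E|\wX_t-\wX'_t|$ using three ingredients: the Yamada--Watanabe functions $\phi_n$ of \cite[Theorem~1]{YamadaI} applied coordinatewise to the square-root entries, whose diffusion contribution vanishes in the limit; the bound $\|\Psi(\wY)^{1/2}-\Psi(\wY')^{1/2}\|^2\le C|\wY-\wY'|$, which holds because $\Psi$ is affine and positive semi-definite along the (state-space-valued) solutions and the matrix square root is operator Hölder of exponent $\tfrac12$, so the $\wZ$-diffusion contributes only a term linear in $|\wY-\wY'|$; and the global Lipschitz continuity of the affine drift, coupling all coordinates through terms linear in $|\wX-\wX'|$. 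Transporting the resulting unique strong solution back through $L,\ell$ gives the required affine SDE. The main obstacle is precisely this combined estimate: one must reconcile the non-Lipschitz square-root coordinates (needing the Yamada--Watanabe Hölder device) with the matrix-square-root block (controllable only because $\Psi$ depends solely on the driving coordinates $\wY$) inside one Gronwall argument.
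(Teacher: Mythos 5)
Your necessity argument and the setup of your sufficiency argument (the reduction via Proposition~\ref{prop:LthetaLT}, the blockwise square root $\sigma=|\theta|^{1/2}$, weak existence from continuity and linear growth) coincide with the paper's proof. The gaps are in the two remaining steps, and they have a common source: you never invoke Proposition~\ref{prop:driftpos}, which converts condition~(\ref{al:cond2}) into the statement that $\gamma\mu(x)=\wa u(x)+\wb$ for some $\wa$ with non-negative off-diagonal entries and $\wb\in\R^q_{\geq0}$, so that the lifted process $u(X)$ solves the \emph{autonomous} $q$-dimensional canonical-form SDE~(\ref{eq:SDEu}). For invariance this is essential: Corollary~\ref{cor:stochinvarcan} is a statement about an autonomous system of that exact diagonal form, so you cannot apply it to ``the $\R^m_{\geq0}$ coordinates'' alone (the drift of the $M$-block depends on the other coordinates), and ``zero diffusion plus inward drift on each facet of $\mathcal{C}$'' is a Nagumo-type subtangential condition whose sufficiency is exactly what needs proof --- recall that $\gamma$ need not have full row rank here, so Proposition~\ref{prop:sufinvar} does not apply to $\mathcal{X}$ directly, and the paper explicitly warns after Proposition~\ref{prop:invargeneral} that conditions (\ref{al:theta}) and (\ref{al:mu}) are not sufficient in general. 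The lift to $u(X)$ is what reduces joint invariance of $\mathcal{X}$ to invariance of $\R^q_{\geq0}$ for~(\ref{eq:SDEu}).

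The second and more serious gap is that your ``single Gronwall loop'' for pathwise uniqueness does not close. Write $Y=X_{M\cup N}$, $Z=X_{P\backslash(M\cup N)}$ and $\Delta Y, \Delta Z$ for the differences of two solutions. The $Z$-diffusion difference satisfies $\|\,|\Psi(Y)|^{1/2}-|\Psi(Y')|^{1/2}\,\|^2\leq C|\Delta Y|$, i.e.\ it is controlled by the \emph{other} block; applying the Yamada--Watanabe functions $\phi_n$ to $\Delta Z_k$ therefore produces a term $\phi_n''(\Delta Z_k)\cdot C|\Delta Y|$, which on the support of $\phi_n''$ is of order $|\Delta Y|/(na_n)$ with $a_n\downarrow 0$ superexponentially fast --- not $O(1/n)$, so the limit does not vanish. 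If instead you estimate $g(t)=\E|\Delta Z_t|^2$ by It\^o while keeping $f(t)=\E|\Delta Y_t|$ for the square-root coordinates, then (since you treat the drift as coupling \emph{all} coordinates) you only obtain inequalities of the shape $f\leq C\int(f+\sqrt{g}\,)\dd s$ and $g\leq C\int(g+f+\cdots)\dd s$, so $h=f+g+\cdots$ satisfies $h\leq C\int (h+\sqrt{h})\dd s$; this has non-trivial solutions vanishing at $0$ (compare $H'=C(H+\sqrt{H})$, $H(0)=0$, $H(t)=(e^{Ct/2}-1)^2$), so no uniqueness follows. The correct argument is sequential, and it is available precisely because of the autonomy supplied by Proposition~\ref{prop:driftpos}: Yamada--Watanabe applied to the autonomous system~(\ref{eq:SDEu}) gives $X_{M\cup N}=X'_{M\cup N}$ pathwise; then the diffusion coefficients of $Z$ and $Z'$ coincide (they depend only on $x_{M\cup N}$), so $\Delta Z$ solves the linear ODE $\dd(\Delta Z_t)=a_{RR}\Delta Z_t\,\dd t$ with zero initial value, hence vanishes --- no combined estimate, and indeed no moment estimate at all, is needed. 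With these two repairs your proof becomes the paper's.
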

\begin{proof}
The ``only if''-part is Proposition~\ref{prop:necinvar}. We prove
the ``if''-part.

Assume (\ref{al:cond1}) and (\ref{al:cond2}). By
Proposition~\ref{prop:LthetaLT} we may assume
\begin{align}\label{eq:canonicaltheta}
\theta(x)=\begin{pmatrix}
    \diag(x_M,0_N) & 0 \\
    0 & \Psi(x_{M\cup N})
  \end{pmatrix},\mbox{ for $x\in \mathcal{X}$}
\end{align}
and
\begin{align}\label{eq:canonicalX}
\mathcal{X}=\R^m_{\geq 0} \times \mathcal{C} \times \R^{p-m-n},
\end{align}
for some index sets $M=\{1,\ldots,m\}$, $N=\{m+1,\ldots,m+n\}$, an
affine function $\Psi$ and some convex polyhedron
$\mathcal{C}\subset\R^n_{\geq0}$, such that $u_{M\cup
N}(x)=x_{M\cup N}$. As a square root of $\theta$ on $\mathcal{X}$
we take
\begin{align}\label{eq:constrsigma}
\sigma(x)= |\theta(x)|^{1/2}=\begin{pmatrix}
    \diag(\sqrt{|x_M|},0_N) & 0 \\
    0 & |\Psi(x_{M\cup N})|^{1/2}
  \end{pmatrix}.
\end{align}
%with $\Xi$ a continuous matrix-valued function which is a square
%root of $\Psi$ on $\mathcal{X}$, for instance
%\begin{align}\label{eq:Xi}
%\Xi=|\Psi|^{1/2}.
%\end{align}
It remains to show that there exists a unique strong solution $X$
to (\ref{eq:SDE}) which stays in $\mathcal{X}$ for all
$x_0\in\mathcal{X}$.

Let $x_0\in\R^p$ be arbitrary. By continuity of the coefficients
$\mu$ and $\sigma$ and satisfaction of the linear growth condition
(\ref{eq:lingrow}), there exists a weak solution $(X,W)$ to
(\ref{eq:SDE}) on some filtered probability space $\filpspace$. To
show stochastic invariance of $\mathcal{X}$, note that by
condition~(\ref{al:cond2}) and Proposition~\ref{prop:driftpos}
there exist $\wa\in\R^{q\times q}$ with $\wa_{ij}\geq0$ for
$i,j\in Q$ with $i\not=j$, and $\wb\in\R^q_{\geq0}$ such that
$\gamma\mu(x)=\wa u(x) +\wb$. Hence $u(X_t)$ satisfies the
$q$-dimensional SDE
\begin{equation}\label{eq:SDEu}
\begin{split}
\dd u(X_t)
    &=(\wa u(X_t)+\wb)\dd t +
    \diag(\sqrt{|X_{M,t}|},0_{Q\backslash
    M})\dd \wW_t,
%    \\ u(X_0)&=u(x_0),
\end{split}
\end{equation}
with initial condition $u(x_0)$, where $\wW$ is a $q$-dimensional
Brownian motion with $\wW_M=W_M$ (possibly after extending
$\Omega$). For this SDE strong existence and uniqueness holds by
\cite[Theorem~1]{YamadaI}. Together with
Corollary~\ref{cor:stochinvarcan} this yields $u(X_t)\in
\R^q_{\geq 0}$, $\PP$-a.s., for all $t\geq 0$ if
$u(x_0)\in\R^q_{\geq0}$. In other words, $X_t\in\mathcal{X}$,
$\PP$-a.s., for all $t\geq 0$ if $x_0\in\mathcal{X}$. Thus
$\mathcal{X}$ is stochastically invariant.

We now show pathwise uniqueness for (\ref{eq:SDE}). Therefore, let
$(\wX,W)$ be another solution on the same filtered probability
space $\filpspace$ with initial condition $x_0$. Pathwise
uniqueness for (\ref{eq:SDEu}) implies that $X_{M\cup N}=u_{M\cup
N}(X)=u_{M\cup N}(\wX)=\wX_{M\cup N }$ a.s. Write
$R={P\backslash(M\cup N)}$. Since $X_{R}$ does not appear in the
diffusion part of the SDE, we have
\[
\dd (X_{R,t}-\wX_{R,t})=a_{RR}(X_{R,t}-\wX_{R,t})\dd t,\quad
X_{R,0}-\wX_{R,0}=0.
\]
So $X_{R}-\wX_{R}$ solves a linear ODE which has 0 as its unique
solution, whence $X_{R}=\wX_{R}$ a.s.\ and the result follows.
%To show existence and uniqueness of a strong solution to
%(\ref{eq:SDE}) we prove pathwise uniqueness. Let $y$ be as in
%(\ref{eq:ycoord}) and define $\wmu$ and $\wsigma$ by
%\begin{align}
%\wmu&:\R^p\rightarrow\R^{p}:x\mapsto L\mu(y^{-1}(x))\label{al:wmu}\\
%\wsigma&:\R^p\rightarrow\R^{p\times p}:x\mapsto
%L\sigma(y^{-1}(x))\label{al:wsigma}.
%\end{align}
%Then $y(X_t)$ solves
%\begin{align}\label{eq:SDEY}
%\dd y(X_t) =\wmu(y(X_t))\dd t + \wsigma(y(X_t))\dd W_t,\quad
%y(X_0)=y(x_0).
%\end{align}
%Note that $u_{M\cup N}=y_{M\cup N}$, whence
%\[
%\wsigma(y)=\begin{pmatrix}
%    \diag(\sqrt{c\odot|u_M|},0_N) & 0 \\
%    0 & \Xi(u_{M\cup N})
%  \end{pmatrix}.
%\]
%Combining this with (\ref{eq:SDEu}) yields
%\begin{equation}\label{eq:SDEueny}
%\begin{split}
%\dd
%  \begin{pmatrix}
%    u(X_t) \\
%    y_{P\backslash(M\cup N)}(X_t)
%  \end{pmatrix}&=  \begin{pmatrix}
%    \wA u(X_t) +\wb  \\
%    \wmu_{P\backslash(M\cup N)}(y(X_t))
%  \end{pmatrix}\dd t \\&+
%  \begin{pmatrix}
%    \diag( \sqrt{c\odot |u_M(X_t)|},0_{Q\backslash M}) & 0 \\
%    0 & \Xi(u_{M\cup N}(X_t))
%  \end{pmatrix}\dd W_t.
%\end{split}
%\end{equation}
%For this SDE pathwise uniqueness holds. The result follows.
\end{proof}
\medskip

By an inspection of the proof of Theorem~\ref{th:constructsigma}
we see that if $X$ solves an affine SDE with polyhedral state
space $\mathcal{X}$ given by (\ref{eq:statespace}), then $u(X)$
solves the affine SDE~(\ref{eq:SDEu}) with \emph{admissible}
parameters in the sense of \cite{ds00} (that is, $\wa$ has
non-negative off-diagonal elements and $\wb\in\R^q_{\geq0}$).
Extending the dimension by considering $(u(X),X_R)$ instead of
$X$, we get another affine SDE with state space
$u(\mathcal{X})\times\R^r$ (with $r=\# R$) and diffusion matrix
$\wtheta$ (say). Now if $\theta$ can be written as an affine
transformation of $u$ with positive semi-definite matrices, then
$\wtheta$ is positive semi-definite on the whole of
$\R^q_{\geq0}\times\R^{r}$. In that case,
$u(\mathcal{X})\times\R^r$ can be enlarged to the canonical state
space $\R^q_{\geq0}\times\R^{r}$ and the resulting SDE is of the
canonical form as in \cite{fm09}. We elaborate on this in the next
subsection.
\subsection{Diagonalizable diffusion matrix}
In \cite{cfk08} it is shown that the diffusion matrix $\theta(x)$
of an affine SDE with a standard canonical state space cannot be
diagonalized in general. That is, there does not exist a
non-singular matrix $L$ such that $L\theta(L^{-1} x)L^\top$ is
diagonal. In this subsection we show that for a large class of
polyhedral state spaces, including the standard canonical state
space, diagonalization of the diffusion matrix is still possible
in a different way, by extending the dimension. We also provide
sufficient conditions for this as well as we give an example of an
affine diffusion whose diffusion matrix is not diagonalizable.
\begin{prop}\label{prop:phipos}
Let $X$ be an affine diffusion with drift $\mu$, diffusion matrix
$\theta$ and polyhedral state space $\mathcal{X}$. Then $X$ is in
distribution equal to an affine transformation of an affine
diffusion with \emph{diagonal} diffusion matrix and canonical
state space of the form $\R^m_{\geq0}\times\R^{p-m}$ if and only
if
\begin{align}\label{eq:thetapossum}
\theta(x)=B^0+\sum_{i=1}^q B^i u_i(x),\,\mbox{ for some positive
semi-definite $B^i$}.
\end{align}
\end{prop}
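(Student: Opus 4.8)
The plan is to prove the two implications separately, reducing throughout to the block-diagonal canonical form supplied by Proposition~\ref{prop:LthetaLT}, and to use Remark~\ref{rem:generator}: since only the law of $X$ matters and the generator depends on $\sigma$ only through $\theta=\sigma\sigma^\top$, it suffices to match drift and diffusion matrix and then invoke a Theorem~\ref{th:constructsigma}-type strong existence statement for the constructed diagonal system.

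For the ``only if'' part, suppose $LX+\ell$ is in distribution a diagonal-diffusion affine diffusion $Y$ on a canonical space $\R^{m'}_{\geq0}\times\R^{p'-m'}$, so that $\theta(x)=L^{-1}\widetilde\theta(Lx+\ell)(L^{-1})^\top$ with $\widetilde\theta$ diagonal, affine and positive semi-definite on that space. Each diagonal entry $\widetilde\theta_{jj}$ is a scalar affine function that is nonnegative on $\R^{m'}_{\geq0}\times\R^{p'-m'}$: nonnegativity along the unconstrained coordinates forces independence of those, and nonnegativity on the orthant forces the remaining coefficients to be nonnegative. Hence $\widetilde\theta(y)=\widetilde B^0+\sum_k \widetilde B^k y_k$ with diagonal positive semi-definite $\widetilde B^0,\widetilde B^k$, a positive semi-definite combination of the canonical facet functions $y_k$. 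Pulling back through $x\mapsto Lx+\ell$ turns each $y_k$ into an affine function $v_k$ that is nonnegative on $\mathcal{X}$; by the convex-geometry representation already used in the proof of Theorem~\ref{th:constructsigma} (Proposition~\ref{prop:driftpos}), each $v_k$ is a nonnegative combination $\sum_i\lambda_{ki}u_i+\lambda_{k0}$ of the defining functions plus a nonnegative constant. Substituting, collecting terms, and using that the congruence $B\mapsto L^{-1}B(L^{-1})^\top$ preserves positive semi-definiteness yields exactly \eqref{eq:thetapossum}.

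For the ``if'' part I would construct the diagonal diffusion by \emph{extending the dimension}, which is unavoidable: by \cite{cfk08} the $B^i$ need not be simultaneously diagonalizable by a single congruence, so a dimension-preserving $L$ cannot work in general. Given \eqref{eq:thetapossum}, decompose each positive semi-definite matrix into rank-one pieces $B^i=\sum_{k=1}^{r_i}\beta^i_k(\beta^i_k)^\top$ and introduce one scalar factor per pair $(i,k)$: a square-root (CIR-type) factor living in $\R_{\geq0}$ for the pieces of $B^1,\dots,B^q$ and a Brownian-type factor in $\R$ for the pieces of $B^0$. Assembling these into a process $Y$ on a canonical space $\R^{M}_{\geq0}\times\R^{N}$, with $M=\sum_{i\geq1}r_i$ and $N=r_0$, and letting $\Lambda$ be the $p\times(M+N)$ matrix of columns $\beta^i_k$, one gets $\theta(x)=\Lambda\,\widetilde\theta(y)\,\Lambda^\top$ with $\widetilde\theta$ diagonal, and recovers $X$ as the affine image $\Lambda Y+c$. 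It then remains to equip $Y$ with an affine drift making it a genuine affine diffusion on the canonical space, read off from $\mu$ and from the linear relations expressing each $u_i(\Lambda Y+c)$ through the factor coordinates, and to check via Corollary~\ref{cor:stochinvarcan} (equivalently Proposition~\ref{prop:driftpos}) that this drift has nonnegative off-diagonal entries and nonnegative constant term, so that the canonical orthant is invariant and \cite[Theorem~1]{YamadaI} applies as in Theorem~\ref{th:constructsigma}.

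The main obstacle is the drift bookkeeping in the ``if'' direction. I must choose the extended factor drift so that simultaneously (i) the diagonal system is itself an admissible canonical affine diffusion with its orthant invariant, and (ii) the image $\Lambda Y+c$ is Markov and reproduces the prescribed affine drift $\mu$ of $X$, not merely the diffusion matrix. Because several factors must project onto the same functional $u_i(X)$, forcing the diagonal entries to depend on $y$ only through $\Lambda y$, the consistency of these factor drifts with the single $p$-dimensional drift $\mu$ and with condition~\eqref{al:cond2} is exactly the delicate point; the convex-analytic representation of nonnegative affine functions on a polyhedron (Proposition~\ref{prop:driftpos} and the appendix) is what makes it go through, precisely as in the discussion following Theorem~\ref{th:constructsigma}.
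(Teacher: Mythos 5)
Your ``only if'' argument has a genuine gap: you write $\theta(x)=L^{-1}\widetilde\theta(Lx+\ell)(L^{-1})^\top$ and work with the inverted relation $Y=LX+\ell$, which silently assumes the affine transformation is bijective. The hypothesis, however, is that $X$ is the \emph{image} $X=L\wX+\ell$ of a diagonal canonical diffusion $\wX$, and --- as you yourself stress when citing \cite{cfk08} --- the case that matters has $\wX$ living in a strictly higher dimension, so $L\in\R^{p\times p'}$ with $p'>p$ is not invertible and the relation $Y=LX+\ell$ cannot even be written down. The paper's proof works in the correct direction: it expands $\theta(Lx+\ell)=L\wtheta(x)L^\top=\sum_j d_j(x)\,L^j(L^j)^\top$ with $d_j=\wtheta_{jj}$ scalar affine functions on the \emph{high-dimensional} space, makes them well defined as functions on $\mathcal{X}$ by ensuring constancy along $\ker L$ (replacing $d_j$ by $d_j\circ\Pi$, with $\Pi$ the projection onto $(\ker L)^\perp$, if necessary), and only then applies Proposition~\ref{prop:sht}. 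Without this step your argument covers only nonsingular, dimension-preserving transformations.

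The ``if'' direction is also not repairable as sketched, and the obstruction is worse than the ``drift bookkeeping'' you flag: it sits in the diffusion admissibility itself. If each rank-one piece $\beta^i_k(\beta^i_k)^\top$ of $B^i$, $i\geq1$, gets its own coordinate $y_{(i,k)}$ constrained to $\R_{\geq0}$ with variance $u_i(\Lambda y+c)$, then stochastic invariance of the canonical orthant (Proposition~\ref{prop:necinvar}) forces the variance of each constrained coordinate to vanish on its own facet $\{y_{(i,k)}=0\}$, hence to be a nonnegative multiple of $y_{(i,k)}$ itself; when $\rank B^i\geq 2$ the single function $u_i(\Lambda y+c)$ cannot be proportional simultaneously to two distinct coordinates $y_{(i,k)}$ and $y_{(i,k')}$, and two coordinates driven by independent Brownian motions cannot be identified either. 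The paper's construction avoids this by a different device: after the reduction of Proposition~\ref{prop:LthetaLT} it \emph{keeps} the original constrained coordinates, which satisfy the admissible SDE~(\ref{eq:SDEu}), and adjoins only \emph{unconstrained} coordinates with zero drift, so that no new facet conditions arise; the invertible block matrix $T=\begin{pmatrix}\Lambda^{1/2}&\Id\\ \Id&0\end{pmatrix}$, built from the full square roots $(\Lambda^i)^{1/2}$ rather than rank-one pieces, then exhibits the extended process as a nonsingular linear image of a diffusion whose diffusion matrix is diagonal with entries $x_i$, $0$, or entries of $w(x_Q)$ --- affine functions of the \emph{retained} coordinates, which is permitted precisely because the added coordinates take values in $\R$. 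You should adopt this construction in place of the one-CIR-factor-per-rank-one-piece scheme.
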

\begin{proof}
We first prove the ``only if''-part. Suppose $X=L\wX+\ell$ for
some matrix $L$ and vector $\ell$, where $\wX$ is an affine
diffusion with diagonal diffusion matrix $\wtheta$ and polyhedral
state space $\widetilde{\mathcal{X}}$. Then it holds that
$L\wtheta(x)L^\top=\theta(Lx+\ell)$ and
$\mathcal{X}=L\widetilde{\mathcal{X}}+\ell$. Since $\wtheta(x)$ is
diagonal, we have
\begin{align}\label{eq:thetaLxplusell}
\theta(Lx+\ell)= L\wtheta(x)L^\top=\sum_{i} d_i(x) L^i (L^i)^\top,
\end{align}
for some affine real-valued functions $d_i$. Note that $L^i
(L^i)^\top\geq0$ and $d_i(x)\geq0$ for
$x\in\widetilde{\mathcal{X}}$. To show that $\theta$ is of the
form (\ref{eq:thetapossum}), it suffices to write $d_i$ as
$d_i(x)=\sum_j \lambda_{ij} u_i(Lx+\ell) + c_i$ for non-negative
$\lambda_{ij}$ and $c_i$. Note that (\ref{eq:thetaLxplusell})
yields $d_i(x+y)=d_i(x)$ for $y\in\ker L$ (otherwise, replace
$d_i(x)$ by $d_i(\Pi (x))$, where $\Pi$ denotes the projection
onto $(\ker L)^\top$). Hence the affine map
\[
\phi_i: Lx+\ell\mapsto d_i(x)
\]
is well-defined and since $\phi_i(y)\geq0$ for
$y\in\mathcal{X}=L\widetilde{\mathcal{X}}+\ell$,
Proposition~\ref{prop:sht} yields the result.

Next we prove the ``if''-part. Suppose (\ref{eq:thetapossum})
holds. By Proposition~\ref{prop:LthetaLT} we may assume $\theta$
is of the form (\ref{eq:canonicaltheta}) with
\[
\Psi(x_{M\cup N})= \Lambda^0+\sum_{i=1}^q \Lambda^i u_i(x),
\]
for some positive semi-definite $\Lambda^i$. In this case
$(u(X),X_{P\backslash (M\cup N)})$ is an affine diffusion where
$u(X)$ satisfies (\ref{eq:SDEu}). Therefore, we assume without
loss of generality that $u(x)=x_{M\cup N}$, so that $Q=M\cup N$,
$q=m+n$ and $\mathcal{X}=\R^{q}_{\geq0}\times\R^{p-q}$. Since
$\Lambda^i\geq0$, its unique positive semi-definite square root
$(\Lambda^i)^{1/2}$ exists. We write
\begin{align*}
\Lambda^{1/2}&=
  \begin{pmatrix}
    (\Lambda^0)^{1/2} & (\Lambda^1)^{1/2} & \ldots & (\Lambda^q)^{1/2}
  \end{pmatrix}\\
  w(x_{Q})&=\vect(1_Q,x_1 1_Q,\ldots,x_q 1_Q).
\end{align*}
Now note that
\[
\Psi(x_{M\cup N})=\Lambda^{1/2}
\diag(w(x_{Q}))(\Lambda^{1/2})^\top,
\]
so $X$ is an affine diffusion with drift $\mu$ and diffusion
matrix
\[
  \theta(x)=\begin{pmatrix}
    \Id & 0 \\
    0 & \Lambda^{1/2}
  \end{pmatrix}
  \begin{pmatrix}
    \diag(x_M,0_{N}) & 0 \\
    0 & \diag(w(x_Q))
  \end{pmatrix}  \begin{pmatrix}
    \Id & 0 \\
    0 & \Lambda^{1/2}
  \end{pmatrix}^\top.
\]
We can diagonalize this by extending the dimension. Define a
non-singular square matrix $T$ by
\[
T=  \begin{pmatrix}
    \Lambda^{1/2} & \Id \\
    \Id & 0
  \end{pmatrix},
\]
and let $\wX$ be an affine diffusion with drift $(\mu(x),0)$ and
diffusion matrix
\[
  \begin{pmatrix}
    \Id & 0 \\
    0 & T
  \end{pmatrix}
  \begin{pmatrix}
    \diag(x_M,0_{N}) & 0 \\
    0 & \diag(w(x_{Q}),0_Q)
  \end{pmatrix}  \begin{pmatrix}
    \Id & 0 \\
    0 & T
  \end{pmatrix}^\top,
\]
and with the law of $\wX_P$ equal to the law of $X_0$. Then
\[
  \begin{pmatrix}
    \Id & 0 \\
    0 & T^{-1}
  \end{pmatrix}\wX
\]
solves an affine SDE with diagonal diffusion matrix $
\diag(x_M,0_N,w(x_Q),0_Q) $ and it is easy to check that $\wX_P$
satisfies an affine SDE with drift $\mu$, diffusion matrix
$\theta$ and initial condition the law of $X_0$. Hence $\wX_P$ is
in distribution equal to $X$ in view of
Remark~(\ref{rem:generator}), which yields the result.
\end{proof}
\medskip

The relevance of diagonalizable diffusion matrices $\theta$ is
elucidated in affine term structure models. In such models the
short rate is an affine transformation of an affine diffusion $X$,
the state factor. In view of Proposition~\ref{prop:phipos}, for an
unobservable state factor $X$, we may assume without loss of
generality that $\theta$ is diagonal when it is of the form
(\ref{eq:thetapossum}). In particular this applies to affine
diffusions in canonical form. This observation reveals that affine
diffusions with canonical state space and a non-diagonal diffusion
matrix have the same potential as those with a diagonal diffusion
matrix, which answers the implicit question in the concluding
section of \cite{cfk08}.
%In the concluding section of \cite{cfk08} implicitly a question on
%the potential of affine diffusions with canonical state space and
%non-diagonalizable diffusion matrix was posed. The above
%observation shows that their potential is the same as those with
%diagonal diffusion matrix.

The following example shows that there exist affine diffusions
with non-canonical polyhedral state space that do not meet the
criteria of Proposition~\ref{prop:phipos}.
\begin{example}
Consider the polyhedron
$\mathcal{X}=\mathcal{C}\times\R^{2}\subset\R^4$, where we take
$\mathcal{C}=\bigcap_{i=1}^3\{u_i\geq0\}\subset\R^2_{\geq0}$, with
$u_1(x)=x_1$, $u_2(x)=x_2$, $u_3(x)=x_1+x_2-\frac{3}{2}$. Let
$\mu$ and $\theta$ be given as
\[
\mu_N(x)=
  \begin{pmatrix}
    -x_1+1 \\
    -x_2+1
  \end{pmatrix},\quad \mu_{P\backslash N}\mbox{ arbitrary },\quad
\theta(x)=
  \begin{pmatrix}
    \diag(0_N) & 0  \\
    0 & \Phi(x)
  \end{pmatrix},
\]
with $N=\{1,2\}$ and
\[
\Phi(x)=
  \begin{pmatrix}
%    \diag(u(x)) & 0 \\
%    0 &
%  \begin{array}{cc}
    x_1+\half & 1 \\
    1 & x_2+\half
%  \end{array}
  \end{pmatrix}.
\]
Then $\mu$ and $\theta$ fulfil (\ref{al:cond1}) and
(\ref{al:cond2}), so by Theorem~\ref{th:constructsigma} there
exists an affine diffusion with state space $\mathcal{X}$, drift
$\mu$ and diffusion matrix $\theta$. However, one can show that
$\theta$ is not of the form (\ref{eq:thetapossum}).
% Note also that the state space
%$\mathcal{X}$ cannot be extended, since
%$\mathcal{X}=\{\theta\geq0\}$.
\end{example}
%
%Therefore, it is worth finding out when $\theta$ assumes this
%special form.
%
%For general polyhedrons one cannot always write $\theta$ in the
%form (\ref{eq:thetapossum}). For example
\medskip

We now give sufficient conditions for (\ref{eq:thetapossum}). In
Proposition~\ref{prop:gammafullrank} below we prove that
(\ref{eq:thetapossum}) does not only hold under full-row rankness
of $\gamma$ (equivalent to the canonical state space) but also
under the weaker condition of full row-rankness of
$\begin{pmatrix}\delta &\gamma\end{pmatrix}$ and an additional
assumption.
\begin{prop}\label{prop:gammafullrank}  Let $\mathcal{X}\subset\R^p$ be given by
(\ref{eq:convexpolyX}) and satisfying (\ref{eq:XinD}). Suppose
that either
\begin{enumerate}[(i)]
\item $\gamma$ has full row-rank or that
\item\label{item:triangle} $\begin{pmatrix}
    \delta &\gamma
  \end{pmatrix}$ has full row-rank and for all $i\in Q$,
$x\in\R^p$ it holds that $u_j(x)=0$ for all $j\in
Q\backslash\{i\}$ implies $u_i(x)\geq 0$.
\end{enumerate}
 Then
(\ref{eq:thetapossum}) holds.
%there exist positive semi-definite $\Lambda^i\in\R^{p\times p}$
%for $i\in Q\cup\{0\}$, such that
%\[
%\theta(x)=\Lambda^0+\sum_{i=1}^q\Lambda^i u_i(x),\quad \mbox{ for
%$x\in\R^p$}.
%\]
\end{prop}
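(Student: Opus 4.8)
The plan is to show first that $\theta$ depends on $x$ only through $u$, and then to select, among the many affine representations of $\theta$ in terms of $u$, one whose coefficients are positive semi-definite. I would begin by exploiting that $\mathcal{X}$ is invariant under translation along $\ker\gamma$: for $v\in\ker\gamma$ and $x\in\mathcal{X}$ the whole line $x+tv$ ($t\in\R$) lies in $\mathcal{X}\subset\{\theta\geq0\}$, since $u(x+tv)=u(x)\geq0$; hence $t\mapsto\theta(x+tv)=\theta(x)+t\sum_k v_kA^k$ is a positive semi-definite-valued affine function of $t\in\R$, which forces $\sum_k v_kA^k=0$. Thus the linear map $v\mapsto\sum_k v_kA^k$ vanishes on $\ker\gamma$ and so factors through $\gamma$ (choosing an extension when $\gamma$ is not onto), giving symmetric matrices $D^0,\dots,D^q$ with $\theta(x)=D^0+\sum_{i=1}^q D^iu_i(x)$. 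Writing $H=u(\R^p)$ for the affine image of $u$, the hypothesis $\mathcal{X}\subset\{\theta\geq0\}$ becomes $\Theta(y):=D^0+\sum_i D^iy_i\geq0$ for all $y\in H\cap\R^q_{\geq0}=u(\mathcal{X})$.

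In case (i), $\gamma$ has full row-rank, so $u$ is onto, $H=\R^q$, and the representation is unique. Evaluating $\Theta$ along the rays $y=te_i$, $t\geq0$, and letting $t\to\infty$ yields $D^i\geq0$, while $y=0$ yields $D^0\geq0$; taking $B^i=D^i$ then proves (\ref{eq:thetapossum}).

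The substantive case is (ii). I would first note that if $\gamma$ does not have full row-rank then $\rank\gamma=q-1$: the unique left-dependency $\sum_i c_i\gamma_i=0$ with $c\neq0$ cannot persist for $\begin{pmatrix}\delta&\gamma\end{pmatrix}$, so $\kappa:=\sum_i c_i\delta_i\neq0$, and after replacing $c$ by $-c$ we may take $\kappa>0$. Consequently $\sum_i c_iu_i\equiv\kappa$ and $H=\{y:c^\top y=\kappa\}$. The geometric hypothesis then fixes the signs of the $c_i$: whenever $c_i\neq0$ the rows $\{\gamma_j:j\neq i\}$ are independent, so some $x$ satisfies $u_j(x)=0$ for all $j\neq i$, giving $c_iu_i(x)=\kappa$; the assumption $u_i(x)\geq0$ forces $\kappa/c_i\geq0$, i.e.\ $c_i>0$. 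Hence $c_i\geq0$ for every $i$.

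Finally I would use the relation $\sum_i c_iu_i\equiv\kappa$ on $H$ to reshuffle the representation: since there $D^0=\kappa^{-1}D^0\sum_i c_iu_i$, replacing $(D^0,D^i)$ by $(B^0,B^i):=(0,\,D^i+\kappa^{-1}c_iD^0)$ leaves $\theta$ unchanged. It then remains to check $B^i\geq0$, again by the ray argument. For $c_i>0$ the point $y=(\kappa/c_i)e_i$ lies in $H\cap\R^q_{\geq0}$, so $\Theta(y)=D^0+(\kappa/c_i)D^i\geq0$, and multiplying by $c_i/\kappa>0$ gives $B^i\geq0$. For $c_i=0$ one has $c^\top e_i=0$, so $e_i\in\gamma\R^p=\{y:c^\top y=0\}$ is a recession direction of $H\cap\R^q_{\geq0}$; pushing any admissible point to infinity along $e_i$ forces $D^i=B^i\geq0$. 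With $B^0=0\geq0$ this establishes (\ref{eq:thetapossum}). I expect the reparametrization in case (ii)---recognizing that the affine dependency among the $u_i$ provides exactly the freedom needed to absorb $D^0$ into the linear coefficients with nonnegative weights, the required signs of the $c_i$ being supplied by the geometric hypothesis---to be the crux; the rest is the ray/recession evaluation already used in case (i).
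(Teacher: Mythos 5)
Your proof is correct, and it takes a genuinely different route from the paper's, although both rest on the same two underlying mechanisms. The paper homogenizes: after a preliminary translation reducing to the case $\delta\geq0$, it extends $\begin{pmatrix}\delta&\gamma\end{pmatrix}$ to a non-singular $\Gamma\in\R^{(p+1)\times(p+1)}$, writes $(1,x)^\top=\sum_{i=1}^{p+1}N^iu_i(x)$ with $N=\Gamma^{-1}$, and so obtains all candidate coefficients $\Lambda^i=\sA N^i$ mechanically from the inverse matrix; it then proves $\Lambda^i\geq0$ for $i\leq q$ by evaluating $\theta$ at the points $N_2^i/N_1^i$ — exactly your vertex points, where the geometric hypothesis in (ii) enters — or, when $N_1^i=0$, by a limiting argument that is where $\delta\geq0$ is needed, and finally kills the spurious coefficients $\Lambda^i$, $i>q$, by a line-invariance argument. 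You instead first establish the representation $\theta=D^0+\sum_iD^iu_i$ intrinsically via your kernel/factorization step (which uses the same line-invariance-plus-positive-semi-definiteness mechanism the paper reserves for the indices $i>q$), then note that under (ii) either $\gamma$ has full row rank (reducing to case (i)) or $\rank\gamma=q-1$, and in the latter case exploit the unique left-dependency $\sum_ic_iu_i\equiv\kappa$, with $c\geq0$ and $\kappa>0$ forced by the geometric hypothesis, to absorb $D^0$ into the linear coefficients; positive semi-definiteness of the new coefficients again comes from vertices (for $c_i>0$) and recession directions (for $c_i=0$). What your route buys: no translation step, no extension bookkeeping, and no need to show spurious coefficients vanish, with the rank structure of the polyhedron made explicit; the absorption of the constant term through the dependency is the distinctive idea. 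What the paper's route buys: the decomposition (\ref{eq:thetapossum}) is produced in one stroke by matrix inversion, with constant and linear parts treated uniformly and no case distinction on $\rank\gamma$.
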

\begin{proof} The case when $\gamma$ has full row-rank is easy, since the state space can be transformed in the canonical form
$\R^m_{\geq0}\times\R^{p-m}$. We consider the case when $\gamma$
has not full row-rank. First we assume $\delta_i\geq0$ for all
$i\in Q$.

Let $\Gamma\in\R^{(p+1)\times(p+1)}$ be a non-singular matrix such
that $\Gamma_{Q}=\begin{pmatrix}
    \delta &\gamma
  \end{pmatrix}$.
  We extend the
  dimension of $\delta$ and $\gamma$ by writing
$ \Gamma=\begin{pmatrix}
    \delta &\gamma
  \end{pmatrix}$.
We also write $u_i(x)=\gamma_i x +\delta_i$, $x\in\R^p$, for all
$i\leq p+1$, in other words
\[
u_i(x)=\Gamma_i \begin{pmatrix}
    1 \\
    x
  \end{pmatrix}.
\]
Let $N=\Gamma^{-1}$. Then we have
\[
  \begin{pmatrix}
    1 \\
    x
  \end{pmatrix}=\Gamma^{-1}\Gamma
  \begin{pmatrix}
    1 \\
    x
  \end{pmatrix}=\sum_{i=1}^{p+1} N^i u_i(x),\mbox{ for $x\in\R^p$}.
\]
Let $\sA=
  \begin{pmatrix}
    A^0 & A^1 &\ldots & A^p
  \end{pmatrix}$ and abuse notation by writing
\[
\sA y = \sum_{i=1}^{p+1} \sA^{i-1} y_{i},\mbox{ for }y\in\R^{p+1}.
\]
Then we can write
\[
\theta(x)= \sA
  \begin{pmatrix}
    1 \\
    x
  \end{pmatrix}=\sum_{i=1}^{p+1} (\sA N^i) u_i(x).
\]
Define $\Lambda^{i}=\sA N^i$ for $i\leq p+1$. It is sufficient to
prove that $\Lambda^i\geq0$ for $i\leq q$ and $\Lambda^i=0$ for
  $i>q$.

Let $i\leq q$ and write $N^i=(N_1^i,N_2^i)$ with $N_1^i\in\R$,
$N_2^i\in\R^p$. We consider two cases.

\emph{Case 1.} Suppose $N_1^i\not=0$. Then
$N_2^i/N_1^i\in\{\theta\geq0\}=:\mathcal{D}$. Indeed, for $j\in Q$
we have
\[
u_j(N_2^i/N_1^i)=\frac{1}{N_1^i}\Gamma_j N^i = \frac{1}{N_1^i}
  1_{\{i=j\}}.
\]
So $u_j(N_2^i/N_1^i)=0$ for $j\in Q\backslash\{i\}$, whence by
assumption $u_i(N_2^i/N_1^i)\geq0$, i.e.\ $N_1^i>0$ and
$N_2^i/N_1^i\in\mathcal{X}\subset\mathcal{D}$. It follows that
\[
0\leq\theta(N_2^i/N_1^i)= \frac{1}{N_1^i}\sA N^i=
\frac{1}{N_1^i}\Lambda^i,
\]
and thus $\Lambda^i\geq0$, as $N_1^i>0$.

\emph{Case 2.} Suppose $N_1^i=0$. Then we have
\begin{align*}\Lambda^i=
\sA
  N^i=\lim_{\varepsilon\downarrow0}\varepsilon \sA
  \begin{pmatrix}
    1 \\
    N_2^i/\varepsilon
  \end{pmatrix}=\lim_{\varepsilon\downarrow0}\varepsilon\theta(N_2^i/\varepsilon).
\end{align*}
So we have to prove $N_2^i/\varepsilon\in\mathcal{D}$ for
$\varepsilon$ small enough. For $j\in Q$  it holds that
\begin{align*}
1_{\{i=j\}}=\Gamma_j
  \begin{pmatrix}
    0 \\
    N_2^i
  \end{pmatrix}
=\gamma_j N_2^i.
\end{align*}
This gives
\[
u_j(N_2^i/\varepsilon)=\delta_j+1_{\{i=j\}}/\varepsilon\geq0,\mbox{
for }j\in Q,\varepsilon>0,
\]
since $\delta_j\geq0$ for $j\in Q$ by assumption. Hence
$N_2^i/\varepsilon\in\mathcal{X}\subset\mathcal{D}$ for
$\varepsilon>0$ and thus $\Lambda^i\geq0$.

We have just shown that $\Lambda^i\geq0$ for $i\leq q$. We now
show that $\Lambda^i=0$ for $i>q$. Let $i>q$ and take
$x_0\in\mathcal{X}$. Write $R=P\cup\{p+1\}$. Since $\rank\gamma=p$
and $\rank \gamma_{R\backslash\{i\}}=p-1$, there exists
$\xi\in\R^p$ such that $\gamma_j\xi=0$ for all $j\in R
\backslash\{i\}$ and $\gamma_i\xi\not=0$. Then for all $k\in\R$ we
have $u_j(x_0+k\xi)=u_j(x_0)$ for $j\in R\backslash\{i\}$, in
particular for all $j\in Q$. Hence $x_0+k\xi\in\mathcal{X}$ for
all $k\in\R$. Therefore,
\[
0\leq \theta(x_0+k\xi)=\sum_{j=1}^{p+1}\Lambda^ju_j(x_0+k\xi)=
\theta(x_0)+k\Lambda^i\gamma_i\xi,\mbox{ for all $k\in\R$}.
\]
Since $\gamma_i\xi\not=0$, it follows that $\Lambda^i=0$.

Now consider the general case without the restriction
$\delta_i\geq0$ for all $i\in Q$. Let $x_0\in\mathcal{X}$ and for
$i\in Q$ define
\[
\widetilde{u}_i:\R^p\rightarrow\R:x\mapsto \widetilde{\gamma}_i x
+\widetilde{\delta_i},
\]
by $\widetilde{u}_i(x)=u_i(x+x_0)$, $x\in\R^p$, i.e.\
$\widetilde{\delta}_i=\delta_i+\gamma_i x_0$ and
$\widetilde{\gamma}_i=\gamma_i$. Moreover, write
$\widetilde{\theta}(x)=\theta(x+x_0)$, $x\in\R^p$. Then
$\widetilde{\delta}_i=\widetilde{u}_i(0)=u_i(x_0)\geq0$, $i\in Q$.
Note that if $\widetilde{u}_j(x)=0$ for all $j\in
Q\backslash\{i\}$, then ${u}_j(x+x_0)=0$ for all $j\in
Q\backslash\{i\}$ and hence $\widetilde{u}_i(x)={u}_i(x+x_0)\geq0$
by assumption. Moreover,
\[
  \begin{pmatrix}
    \widetilde{\delta} & \widetilde{\gamma}
  \end{pmatrix}
=  \begin{pmatrix}
    \delta+\gamma x_0 & \gamma
  \end{pmatrix}
  =  \begin{pmatrix}
    \delta& \gamma
  \end{pmatrix}
  \begin{pmatrix}
    1 & 0 \\
    x_0 & \Id
  \end{pmatrix},
\]
which has full row-rank. Therefore we are in the previous
situation, for which we have proved the existence of positive
semi-definite $\Lambda^i\in\R^{p\times p}$ for $i\in Q\cup\{0\}$
such that
\[
{\theta}(x+x_0)=\widetilde{\theta}(x)=\Lambda^0+\sum_{i=1}^q\Lambda^i\widetilde{u}_i(x)=\Lambda^0+\sum_{i=1}^q\Lambda^i{u}_i(x+x_0).
\]
This gives the result.
\end{proof}
\begin{cor}\label{cor:contained}
Let $\mathcal{X}\subset\R^p$ be given by (\ref{eq:convexpolyX}).
Suppose $\mathcal{X}$ is contained in a polyhedron
$\mathcal{C}\subset\{\theta\geq0\}$ which meets the criteria of
Proposition~\ref{prop:gammafullrank}. Then (\ref{eq:thetapossum})
holds.
\end{cor}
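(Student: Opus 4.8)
The plan is to apply Proposition~\ref{prop:gammafullrank} not to $\mathcal{X}$ itself but to the larger polyhedron $\mathcal{C}$, and then to push the resulting representation back down to the defining inequalities of $\mathcal{X}$. Write $\mathcal{C}=\bigcap_{j=1}^{r}\{v_j\geq0\}$ with affine functions $v_j:\R^p\to\R$. First I would check that $\mathcal{C}$ satisfies the standing hypothesis (\ref{eq:XinD}): by assumption $\mathcal{C}\subset\{\theta\geq0\}$, and since $\mathcal{X}\subset\mathcal{C}$ the open set $\mathcal{X}^\circ$ (nonempty by (\ref{eq:XinD}) for $\mathcal{X}$) is an open subset of $\mathcal{C}$, hence $\emptyset\neq\mathcal{X}^\circ\subset\mathcal{C}^\circ$. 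As $\mathcal{C}$ meets the criteria of Proposition~\ref{prop:gammafullrank} by assumption, that proposition yields positive semi-definite matrices $C^0,C^1,\ldots,C^r$ with
\[
\theta(x)=C^0+\sum_{j=1}^{r} C^j v_j(x),\qquad x\in\R^p .
\]

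Next, I would express each $v_j$ in terms of the defining functions $u_1,\ldots,u_q$ of $\mathcal{X}$. Since $\mathcal{X}\subset\mathcal{C}\subset\{v_j\geq0\}$, each $v_j$ is a nonnegative affine function on the polyhedron $\mathcal{X}$. Hence, by the same convex-analytic result used in the proof of Proposition~\ref{prop:phipos} (namely Proposition~\ref{prop:sht}), there exist nonnegative scalars $\lambda_{j0},\lambda_{j1},\ldots,\lambda_{jq}$ such that
\[
v_j(x)=\lambda_{j0}+\sum_{i=1}^q \lambda_{ji}\,u_i(x),\qquad x\in\R^p .
\]
Substituting this into the representation of $\theta$ and collecting the constant term together with the coefficient of each $u_i$ gives
\[
\theta(x)=\Bigl(C^0+\sum_{j=1}^{r}\lambda_{j0}C^j\Bigr)+\sum_{i=1}^q\Bigl(\sum_{j=1}^{r}\lambda_{ji}C^j\Bigr)u_i(x).
\]
Setting $B^0=C^0+\sum_{j}\lambda_{j0}C^j$ and $B^i=\sum_{j}\lambda_{ji}C^j$ for $i\in Q$, each $B^i$ is a nonnegative linear combination of the positive semi-definite matrices $C^j$ and is therefore itself positive semi-definite. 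This is exactly (\ref{eq:thetapossum}).

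The only genuinely delicate point is the middle step: rewriting the inequalities defining $\mathcal{C}$ as nonnegative combinations of those defining $\mathcal{X}$. This is where the containment $\mathcal{X}\subset\mathcal{C}$ enters, translated into the statement that each $v_j$ is a nonnegative affine function on $\mathcal{X}$, and where one needs the Farkas-type representation with provably nonnegative coefficients (including a nonnegative constant) supplied by Proposition~\ref{prop:sht}. Everything else---the verification that $\mathcal{C}$ satisfies (\ref{eq:XinD}) and the closure of the positive semi-definite cone under nonnegative combinations---is routine.
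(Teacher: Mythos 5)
Your proposal is correct and follows essentially the same route as the paper's own proof: apply Proposition~\ref{prop:gammafullrank} to $\mathcal{C}$ to obtain $\theta$ as a combination of the $v_j$ with positive semi-definite coefficients, then use Proposition~\ref{prop:sht} to express each $v_j$ as a nonnegative affine combination of the $u_i$ and substitute. Your additional verification that $\mathcal{C}$ satisfies (\ref{eq:XinD}) is a small point the paper leaves implicit, but otherwise the two arguments coincide.
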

\begin{proof}
Write $\mathcal{C}=\bigcap_{i=1}^r \{v_i\geq0\}$, for some affine
functions $v_i$, with $r\in\N$. By
Proposition~\ref{prop:gammafullrank} it holds that
\begin{align}\label{eq:thetasumvi}
\theta(x)=B^0+\sum_{i=1}^r  B^i v_i(x),
\end{align}
for some positive semi-definite $B^i$. By
Proposition~\ref{prop:sht} there exist $\lambda_{ij}\geq0$,
$c_i\geq0$ such that
\[
v_i=\sum_{j=1}^r \lambda_{ij}u_j+c_i.
\]
Plugging this in into~(\ref{eq:thetasumvi}) we get the result.
\end{proof}

\begin{example}
In the $2$-dimensional case, the polyhedrons which satisfy the
conditions of Proposition~\ref{prop:gammafullrank} are the
``triangles'' (including those with vertices and edges in
``infinity'', like $\{0\leq x_1\leq 1\}\cap\{x_2\geq0\}$ and
$\{x_1\geq0\}$). Thus by Corollary~\ref{cor:contained}, if
$\mathcal{X}$ is contained in a triangle that is a subset of
$\{\theta\geq0\}$, then (\ref{eq:thetapossum}) holds and
$\mathcal{X}$ can be transformed in canonical form (see the proof
of Proposition~\ref{prop:phipos}). However, this condition is
sufficient, but not necessary. For example let
\begin{align*}
\theta(x)&=
  \begin{pmatrix}
    x_1 & 1 \\
    1 & x_2
  \end{pmatrix}=\begin{pmatrix}
    0 & 1 \\
    1 & 0
  \end{pmatrix}+\begin{pmatrix}
    1 & 0 \\
    0 & 0
  \end{pmatrix}x_1+\begin{pmatrix}
    0 & 0 \\
    0 & 1
  \end{pmatrix}x_2,
\end{align*}
and take $u_1(x)=2x_1-x_2$, $u_2(x)=-\half x_1 +x_2$,
 $u_3(x)=-\frac{9}{4}+x_1+x_2$. Then $\{\theta\geq0\}=\{x\in\R^2_{\geq0}:x_2\geq\frac{1}{x_1}\}$ and
$\mathcal{X}=\bigcap_{i=1}^3\{u_i\geq0\}\subset\{\theta\geq0\}$,
but $\mathcal{X}$ is not contained in a triangle which is a subset
of $\{\theta\geq0\}$. Still we can write $\theta$ as an affine
transformation of the $u_i$'s with positive semi-definite
coefficients, namely
\[
\theta=\frac{1}{2}
  \begin{pmatrix}
    1 & 1 \\
    1 & 1
  \end{pmatrix}
+\frac{1}{9}
  \begin{pmatrix}
    4 & 2 \\
    2 & 1
  \end{pmatrix}u_1+\frac{1}{9}
  \begin{pmatrix}
    2 & 4 \\
    4 & 8
  \end{pmatrix}u_2+\frac{1}{9}
  \begin{pmatrix}
    2 & -2 \\
    -2 & 2
  \end{pmatrix}u_3.
\]
\end{example}

\subsection{Classical model} In this
subsection we revisit the classical model as introduced by Duffie
and Kan in \cite{dk96}. We assume $\theta$ is of the form
\begin{align}\label{eq:specialtheta}
\theta=\Sigma\,\diag(v)\Sigma^\top=\sum_{i=1}^p
\Sigma^i(\Sigma^i)^\top v_i,
\end{align}
with $\Sigma\in\R^{p\times p}$, $v:\R^p\rightarrow\R^p:x\mapsto
\beta x+\alpha$ for some $\beta\in\R^{p\times p}$,
$\alpha\in\R^p$. In addition we assume
$\mathcal{X}\subset\bigcap_{i=1}^p\{v_i\geq 0\}$ and
$\mathcal{X}^\circ\not=\emptyset$. Under
conditions~(\ref{al:cond1}) and (\ref{al:cond2}), the proof of
Theorem~\ref{th:constructsigma} constructs a square root $\sigma$
of $\theta$ on $\mathcal{X}$ such that (\ref{eq:SDE}) has a unique
strong solution. We show that the natural choice
\[
\sigma=\Sigma\,\diag(\sqrt{|v|})=
  \begin{pmatrix}
    \Sigma^1\sqrt{|v_1|} &\ldots &\Sigma^p\sqrt{|v_p|}  \end{pmatrix}
\]
also gives strong existence and uniqueness for (\ref{eq:SDE}).
This is not immediately clear in view of
Remark~\ref{rem:generator}.
%\begin{lemma}\label{lem:Adiagx}
%Let $A\in\R^{p\times p}$, $x,y\in\R^p$ arbitrary. If
%$A\,\diag(x)=\diag(y)A$, then
%$A\,\diag(\sqrt{|x|})=\diag(\sqrt{|y|})A$.
%\end{lemma}
\begin{prop}\label{prop:classicstrong}
Suppose $\theta$ is of the form (\ref{eq:specialtheta}) and assume
$\mathcal{X}^\circ\not=\emptyset$,
$\mathcal{X}\subset\bigcap_{i=1}^p\{v_i\geq 0\}$ and
conditions~(\ref{al:cond1}) and (\ref{al:cond2}) are met. Then
(\ref{eq:SDE}) is an affine SDE for
$\sigma=\Sigma\,\diag(\sqrt{|v|})$.
\end{prop}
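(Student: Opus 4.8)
The plan is to verify the defining properties of an affine SDE for the choice $\sigma=\Sigma\,\diag(\sqrt{|v|})$, reducing everything to pathwise uniqueness. On $\mathcal{X}$ we have $|v_i|=v_i$, so $\sigma\sigma^\top=\Sigma\,\diag(v)\Sigma^\top=\theta$; thus $\sigma$ is a genuine square root of $\theta$ on $\mathcal{X}$ with the same generator as the root $|\theta|^{1/2}$ used in Theorem~\ref{th:constructsigma}. Weak existence follows from continuity and the linear growth of $\sigma$ (the entries $\Sigma_{ik}\sqrt{|v_k(x)|}$ grow sublinearly since $v$ is affine). Stochastic invariance of $\mathcal{X}$ is inherited from Theorem~\ref{th:constructsigma}: because the law of a solution is governed solely by $(\mu,\sigma\sigma^\top)=(\mu,\theta)$ (Remark~\ref{rem:generator}), the invariant solution constructed there solves the present SDE for a suitable Brownian motion on a possibly enlarged space, so a weak solution staying in $\mathcal{X}$ exists. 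By the Yamada--Watanabe theorem it then remains to establish pathwise uniqueness.

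For pathwise uniqueness I would first pass, via Proposition~\ref{prop:LthetaLT}, to the canonical coordinates in which $\theta(x)=\diag(x_M,0_N)\oplus\Psi(x_{M\cup N})$ and $\mathcal{X}=\R^m_{\geq0}\times\mathcal{C}\times\R^{p-m-n}$ with $u(x)=x_{M\cup N}$; the affine transformation preserves the product form, so still $\theta=\Sigma\,\diag(v)\Sigma^\top$, $\sigma=\Sigma\,\diag(\sqrt{|v|})$ and $\mathcal{X}\subset\bigcap_i\{v_i\geq0\}$. Two structural facts drive the argument. First, since $\mathcal{X}$ contains every line in the $x_R$-directions and each $v_k$ is affine and nonnegative on $\mathcal{X}$, each $v_k$ must be independent of $x_R$. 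Second, and this is the crux, for $i\in M$ the identity $\sum_k\Sigma_{ik}^2 v_k=\theta_{ii}=x_i$ (valid in the canonical form secured by~(\ref{al:cond1})) together with $v_k\geq0$ forces every factor with $\Sigma_{ik}\neq0$ to vanish on the facet $\{x_i=0\}\cap\mathcal{X}$, hence to be proportional to $x_i$; consequently the martingale part of $X_i$ collapses to $\sqrt{|X_i|}\,\dd B^i$ for a Brownian motion $B^i$ that is a \emph{fixed} linear combination of the $W^k$ (and $\theta_{ii}=0$ kills the martingale part for $i\in N$). In other words, although $\Sigma$ blends the driving Brownian motions, each constrained coordinate still obeys a self-coupled square-root equation.

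With these in hand the uniqueness proof follows the pattern of Theorem~\ref{th:constructsigma}. By~(\ref{al:cond2}) and Proposition~\ref{prop:driftpos} the drift of $u(X)=X_{M\cup N}$ equals $\wa X_{M\cup N}+\wb$, which is Lipschitz and closed in $X_{M\cup N}$; combined with the self-coupled diffusion just identified, $X_{M\cup N}$ solves a canonical-form system to which the Yamada--Watanabe estimate~\cite[Theorem~1]{YamadaI} applies componentwise (the square-root terms are controlled through $\phi_n''(X_i-\wX_i)\,|X_i-\wX_i|$ and the drift coupling through Gronwall), giving $X_{M\cup N}=\wX_{M\cup N}$ for two solutions driven by the same $W$. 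Since every $v_k$ depends only on $x_{M\cup N}$, this already yields $\sigma(X)=\sigma(\wX)$, so $X_R-\wX_R$ satisfies the homogeneous linear ODE $\dd(X_R-\wX_R)=a_{RR}(X_R-\wX_R)\,\dd t$ with zero initial value and hence vanishes, completing pathwise uniqueness and the proof.

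The step I expect to be the genuine obstacle is the self-coupling claim for $i\in M$: extracting, from the canonical reduction and the nonnegativity of the factors, the proportionalities $v_k\propto x_i$ that make the mixed noise $\sum_k\Sigma_{ik}\sqrt{|v_k|}\,\dd W^k$ collapse to $\sqrt{|X_i|}\,\dd B^i$. Everything else --- weak existence, invariance, the deterministic $x_R$-directions, and the final Yamada--Watanabe estimate --- is either routine or already available from Theorem~\ref{th:constructsigma}.
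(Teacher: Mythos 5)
Your proposal is correct and follows the same overall architecture as the paper's proof: reduce to the canonical coordinates of Proposition~\ref{prop:LthetaLT}, show that the mixed noise driving $X_{M\cup N}$ collapses to the canonical square-root form, invoke Yamada--Watanabe \cite[Theorem~1]{YamadaI} as in Theorem~\ref{th:constructsigma}, and finish the $x_R$-coordinates with the linear ODE argument. Where you differ is in how the collapse is established. The paper writes $\Sigma\,\diag(\sqrt{v(x)})$ as the block square root $\diag(\sqrt{x_M},0_N)\oplus\Psi^{1/2}$ times an $x$-dependent orthogonal matrix $O(x)$, and then uses Proposition~\ref{prop:uiscv} to show that the relevant block of $O(x)$ is in fact constant, equal to some $U$ with orthonormal rows, so that $UW$ is again a Brownian motion. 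You instead read off the proportionalities directly from the diagonal identity $\theta_{ii}=\sum_k\Sigma_{ik}^2v_k=x_i$ and nonnegativity of the $v_k$ on $\mathcal{X}$: each $v_k$ with $\Sigma_{ik}\neq0$ vanishes on the facet $\{x_i=0\}\cap\mathcal{X}$, hence $v_k=c_kx_i$ (this ``hence'' is exactly Proposition~\ref{prop:uiscv}, which you should cite), and the normalization $\sum_k\Sigma_{ik}^2c_k=1$ makes each $B^i$ a standard Brownian motion. This is a somewhat more elementary route to the same key fact, avoiding the orthogonal-square-root argument; similarly, your line argument for $v_k$ being independent of $x_R$ replaces the paper's appeal to Proposition~\ref{prop:sht}. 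The one step you should make explicit is that $(B^1,\dots,B^m)$ is jointly a multidimensional Brownian motion, which Yamada--Watanabe needs: this follows within your framework because no $v_k$ can be proportional to two distinct coordinates $x_i,x_j$ with positive constants, so the cross-variations $\langle B^i,B^j\rangle$ vanish and L\'evy's characterization applies --- the paper gets this for free from the orthonormality of the rows of $U$.
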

\begin{proof}
%Since the invariance conditions are sufficient, we only have to
%prove that existence and uniqueness of a strong solution holds for
%(\ref{eq:SDE}).
Since conditions~(\ref{al:cond1}) and (\ref{al:cond2}) are
sufficient for invariance by Theorem~\ref{th:constructsigma}, it
suffices to prove existence and uniqueness of a strong solution.

Note that $X$ solves an affine SDE with
$\sigma(x)=\Sigma\,\diag(\sqrt{|v(x)|})$ if and only if $LX+\ell$
solves an affine SDE with
$\sigma(x)=L\Sigma\,\diag(\sqrt{|\wv(x)|})$, where
$\wv(x)=v(L^{-1}(x-\ell))$, for some non-singular matrix $L$ and
vector $\ell$. By Proposition~\ref{prop:LthetaLT} we can therefore
assume without loss of generality that $\theta$ is of the form
(\ref{eq:canonicaltheta}) and $\mathcal{X}$ of the form
(\ref{eq:canonicalX}).
%By extending the dimension and taking
%\[
%  \mu(x)=
%  \begin{pmatrix}
%    ax+b \\
%    0
%  \end{pmatrix},\quad
%   \sigma(x)=\begin{pmatrix}
%    \Sigma & \Id \\
%    \Id & 0
%  \end{pmatrix}\begin{pmatrix}
%    \diag(\sqrt{|v(x)|}) & 0 \\
%    0 & 0
%  \end{pmatrix},
%\]
%we might as well assume that $\Sigma$ is non-singular. Applying
%the linear transformation
%\[
%  x\mapsto\begin{pmatrix}
%    \Id_{M\cup N}\\
%    \Sigma^{-1}_{P\backslash (M\cup N)}
%  \end{pmatrix}x,
%\]
%we can take $\theta$ of the form
%\[
%\theta(x)=\begin{pmatrix}
%    \Sigma_{M\cup N}\\
%  \begin{array}{cc}
%    0 & \Id
%  \end{array}
%    \end{pmatrix}\diag(v(x))\begin{pmatrix}
%    \Sigma_{M\cup N}\\
%  \begin{array}{cc}
%    0 & \Id
%  \end{array}
%    \end{pmatrix}^\top
%\]
%which equals
%\[
%  \begin{pmatrix}
%    \diag(x_M,0_N) & 0 \\
%    0 & \diag(v_{P\backslash (M\cup N)}(x))
%  \end{pmatrix},
%\]
%by (\ref{eq:canonicaltheta}).
Since all square roots of a positive semi-definite matrix are
related by an orthogonal transformation, we have
\begin{align}\label{eq:ortho}
    \Sigma\,\diag(\sqrt{v(x)})=\begin{pmatrix}
    \diag(\sqrt{x_M},0_N) & 0 \\
    0 & (\Psi(x_{M\cup N}))^{1/2}
  \end{pmatrix}O(x),
\end{align}
for $x\in\mathcal{X}$, with $O(x)$ an orthogonal matrix, possibly
depending on $x$. We show that there exists a matrix
$U\in\R^{(m+n)\times p}$ with orthonormal rows such that
\[
\Sigma_{M\cup N}
\,\diag(\sqrt{|v(x)|})=\diag(\sqrt{|x_M|},0_N)U,\mbox{ for all
}x\in\R^p.
\]
Let $x_0\in\mathcal{X}^\circ$ and define $U=O_{M\cup N}(x_0)$. We
have to show that
\begin{align}\label{eq:orthoU}
\Sigma_{ij}\sqrt{|v_j(x)|}=\sqrt{|x_i|}U_{ij},\mbox{ for all $i\in
M$, all $j$}.
\end{align}
For $i\in M$ we have
\[
\Sigma_{ij}\sqrt{v_j(x)}=\sqrt{x_i}O_{ij}(x),\mbox{ for all $j$
and all $x\in\mathcal{X}$}.
\]
If $\Sigma_{ij}\neq 0$, then
\[
v_j(x)=\Sigma_{ij}^{-2} x_i O_{ij}^2(x),\mbox{ for all
}x\in\mathcal{X},
\]
which yields $v_j(x)=c x_i$ for some $c\geq0$, by
Proposition~\ref{prop:uiscv}. If $c>0$, then $O_{ij}(x)$ is
constant, whence equal to $U_{ij}$ and (\ref{eq:orthoU}) follows.
If $c=0$ or $\Sigma_{ij}=0$, then $O_{ij}(x)=0$ for $x_i\neq 0$,
whence $U_{ij}=0$ and again (\ref{eq:orthoU}) holds.

Let $(X,W)$ be a weak solution to (\ref{eq:SDE}) on some filtered
probability space $\filpspace$. Then $UW$ is an
$(m+n)$-dimensional Brownian motion and it follows that $X_{M\cup
N}$ solves an SDE with diffusion part $\diag(\sqrt{|x_M|},0_N)$.
The strong existence and uniqueness for (\ref{eq:SDE}) follows
along the same lines as the proof of
Theorem~\ref{th:constructsigma}, as $\sigma(x)$ only depends on
$x_{M\cup N}$ by Proposition~\ref{prop:sht}.
\end{proof}
\medskip

In the case that $\{\theta>0\}\neq\emptyset$, we can strengthen
the result of Proposition~\ref{prop:phipos}.
\begin{prop}
Consider the situation of Proposition~\ref{prop:classicstrong}. If
$\{\theta>0\}\neq\emptyset$, then the solution to (\ref{eq:SDE})
can be obtained by a bijective affine transformation of an affine
diffusion with diagonal diffusion matrix and canonical state space
of the form $\R^m_{\geq0}\times\R^{p-m}$.
\end{prop}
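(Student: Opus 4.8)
The plan is to combine the reduction already available from Proposition~\ref{prop:LthetaLT} with a commutativity argument that is made possible by the extra hypothesis $\{\theta>0\}\neq\emptyset$. First I record that $\{\theta>0\}\neq\emptyset$ forces $\Sigma$ to be non-singular: if $\theta(\bar x)=\Sigma\diag(v(\bar x))\Sigma^\top$ is positive definite, then $\Sigma$ has full rank and every entry of $v(\bar x)$ is strictly positive. In particular, writing $\theta(x)=A^0+\sum_{i=1}^p A^i x_i$, the matrices $A^0,\dots,A^p$ are \emph{simultaneously congruence-diagonalizable}, since $\Sigma^{-1}A^i\Sigma^{-\top}$ is diagonal for every $i$. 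Because the classical structure (\ref{eq:specialtheta}) is preserved by the affine change $x\mapsto Lx+\ell$ of Proposition~\ref{prop:LthetaLT} (with $\Sigma$ replaced by the still non-singular $L\Sigma$), I may assume $\theta$ is of the block form (\ref{eq:canonicaltheta}) and $\mathcal{X}$ of the form (\ref{eq:canonicalX}). The zero block $0_N$ in (\ref{eq:canonicaltheta}) would give $\theta$ a permanent zero eigenvalue, so $\{\theta>0\}\neq\emptyset$ forces $N=\emptyset$; hence $\mathcal{C}$ is trivial, the state space is already the canonical $\R^m_{\geq0}\times\R^{p-m}$, and $\theta(x)=\diag(\diag(x_M),\Psi(x_M))$ with $\Psi(\bar x_M)>0$ for some $\bar x_M\in\R^m_{\geq0}$.

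The heart of the argument is to diagonalize the block $\Psi$ by a \emph{single} non-singular congruence. Writing $R=\{m+1,\dots,p\}$, the reduced form makes every $A^i$ block diagonal along the split $M\mid R$: one has $(A^i)_{MR}=0$, $(A^i)_{MM}=e_ie_i^\top$ for $i\in M$ and $A^i=0$ for $i\in R$, while the blocks $(A^i)_{RR}$ with $i\in M\cup\{0\}$ are exactly the coefficients of $\Psi$. Now fix $\bar x$ with $\theta(\bar x)>0$, so that both diagonal blocks $\diag(\bar x_M)$ and $\Psi(\bar x)$ are positive definite. Since the whole family $\{A^i\}$ is simultaneously congruence-diagonalizable and $\theta(\bar x)$ is a positive-definite member of the pencil, the matrices $\theta(\bar x)^{-1}A^i$ are simultaneously similar to diagonal matrices and therefore pairwise commute. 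As $\theta(\bar x)^{-1}$ and all $A^i$ are block diagonal, the $R$-blocks $\Psi(\bar x)^{-1}(A^i)_{RR}$ pairwise commute as well; being self-adjoint for the inner product induced by $\Psi(\bar x)$, they are simultaneously diagonalizable by a congruence, i.e.\ there is a non-singular $H\in\R^{(p-m)\times(p-m)}$ with $H(A^i)_{RR}H^\top$ diagonal for all $i$. Equivalently, $H\Psi(x_M)H^\top$ is diagonal for every $x_M$.

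Finally I would assemble the transformations. The block map $\diag(\Id_m,H)$ is linear, non-singular, fixes the first $m$ coordinates and acts bijectively on the free coordinates indexed by $R$; hence it leaves the state space equal to $\R^m_{\geq0}\times\R^{p-m}$ and turns $\theta$ into the fully diagonal matrix $\diag(\diag(x_M),H\Psi H^\top)$. Composing it with the reduction $x\mapsto Lx+\ell$ of the first paragraph exhibits $X$ as a bijective affine transformation of an affine diffusion with diagonal diffusion matrix and canonical state space, the strong existence, uniqueness and invariance being inherited through the affine-covariance observation of Section~\ref{sec:def} and Proposition~\ref{prop:classicstrong}. I expect the diagonalization of $\Psi$ in the second paragraph to be the crux: in general $\Psi$ need \emph{not} be congruence-diagonalizable by a single matrix — this is precisely the obstruction behind Proposition~\ref{prop:phipos} and the non-diagonalizable example, where diagonalization is only achieved after extending the dimension and yields equality in distribution rather than a pathwise map. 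The role of $\{\theta>0\}\neq\emptyset$ is exactly to furnish a positive-definite member of the pencil $\{A^i\}$, which upgrades simultaneous congruence-diagonalizability to pairwise commutativity; the latter descends to the $R$-block and makes the bijective, state-space-preserving diagonalization of $\Psi$ possible. A minor point to verify carefully is that a state-space-preserving linear map must be block lower-triangular with a positive monomial $M$-block, and that diagonality then forces its off-diagonal block to vanish, so that only the diagonalizability of $\Psi$ itself is at stake.
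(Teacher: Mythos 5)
Your proof is correct, and its crux is genuinely different from the paper's. Both arguments begin identically: reduce via Proposition~\ref{prop:LthetaLT} to $\theta(x)=\diag(\diag(x_M),\Psi(x_M))$ on the canonical state space, and use $\{\theta>0\}\neq\emptyset$ to rule out the $0_N$ block. The difference lies in how one produces a single constant matrix diagonalizing $\Psi(x_M)$ for every $x_M$. The paper first normalizes --- choosing $x_0$ with $x_{0,i}=1$ for $i\in M$, applying $\diag(\Id,\Psi(x_{0,M})^{-1/2})$ and rescaling $v$ and $\Sigma$ so that $\theta(x_0)=\Id$ --- which makes $\Sigma$ orthogonal; then the identity $\theta(x)\Sigma=\Sigma\,\diag(v(x))$ shows that the columns of $\Sigma_{P\backslash M}$ are $x$-independent eigenvectors of $\Psi(x_M)$, so one orthogonal $O$ diagonalizes all of them, and the proof ends by absorbing the orthogonal $\Sigma$ into the Brownian motion (thereby also exhibiting the classical square root $\Sigma\,\diag(\sqrt{|v|})$ as a diagonal coefficient times a rotation of the driving noise). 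You bypass the normalization entirely with a matrix-pencil argument: simultaneous congruence diagonalization by $\Sigma^{-1}$ plus the positive definite member $\theta(\bar x)$ forces the matrices $\theta(\bar x)^{-1}A^i$ to commute, commutation descends to the $RR$ blocks, and commuting $\Psi(\bar x)$-self-adjoint operators admit one diagonalizing congruence $H$; since $\diag(\Id_m,H)$ preserves $\R^m_{\geq0}\times\R^{p-m}$, orthogonality of the diagonalizer is never needed, and the affine-covariance observation of Section~\ref{sec:def} together with Proposition~\ref{prop:classicstrong} transfers strong existence, uniqueness and invariance. Your route is more self-contained linear algebra and avoids the rescalings; the paper's route yields the extra pathwise information that the original solution is driven, after the transformation, by a mere rotation of the Brownian motion. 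One remark: your closing ``minor point'' about classifying all state-space-preserving linear maps is unnecessary --- you only need that your particular map $\diag(\Id_m,H)$ preserves the state space, which is immediate from its block structure.
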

\begin{proof}
As in the proof of Proposition~\ref{prop:classicstrong} we may
assume $\theta$ is of the form (\ref{eq:canonicaltheta}) and
$\mathcal{X}$ of the form (\ref{eq:canonicalX}). Since
$\{\theta>0\}\neq\emptyset$ we have $N=\emptyset$. Take
$x_0\in\mathcal{X}^\circ=\R^m_{>0}\times\R^{p-m}$ with $x_{0,i}=1$
for $i\in M$. By strict positive definiteness of $\Psi(x_{0,M})$
we can apply the linear transformation
\[
x\mapsto
  \begin{pmatrix}
    I_{MM} & 0 \\
    0 & (\Psi(x_{0,M}))^{-1/2}
  \end{pmatrix}x,
\]
so that we may assume without loss of generality that
$\Psi(x_{0,M})=\Id$, i.e.\ $\theta(x_0)=\Id$. Since $v_i(x_0)>0$
for all $i$, we can substitute $v_i/v_i(x_0)$ for $v_i$ and
$\sqrt{v_i(x_0)}\Sigma^i$ for $\Sigma^i$, which does not affect
$\sigma(x)$ and yields $v_i(x_0)=1$ for all $i$. Hence
\[
\Sigma\Sigma^\top=\theta(x_0)=\Id,
\]
in other words, $\Sigma$ is orthogonal. Thus we can write
\[
\Sigma\,\diag(v(x))=
  \begin{pmatrix}
    \diag(x_M) & 0 \\
    0 & \Psi(x_M)
  \end{pmatrix}\Sigma.
\]
By symmetry, $\Psi(x_M)$ can be diagonalized by an orthogonal
matrix $U(x_M)$ for all $x_M$. We show that $U(x_M)$ is constant.
The above display yields
\[
\Sigma_{P\backslash M}\,\diag(v(x))=\Psi(x_M)\Sigma_{P\backslash
M},
\]
so for all $x_M$ the eigenvectors of $\Psi(x_M)$ are in the span
of the columns of $\Sigma_{P\backslash M}$, since
$\rank\Sigma_{P\backslash M}=p-m\geq \rank\Psi(x_M)$. Hence the
eigenvectors do not depend on $x_M$, which implies that $U(x_M)$
is constant, say equal to an orthogonal matrix $O$. Thus
$O^\top\Psi(x_M)O=\diag(w(x_M))$, for some affine vector-valued
function $w$. Applying the orthogonal transformation
\[
x\mapsto
  \begin{pmatrix}
    I_{MM} & 0 \\
    0 & O^\top
  \end{pmatrix}x,
\]
we may assume $\Psi(x_M)$ is diagonal. Hence
\[
\Sigma\,\diag(v(x))=\begin{pmatrix}
    \diag(x_M) & 0 \\
    0 & \diag(w(x_M))
  \end{pmatrix}\Sigma,
\]
with $\Sigma$ orthogonal. It is easy to show that this yields
\[
\Sigma\,\diag(\sqrt{|v(x)|})=\begin{pmatrix}
    \diag(\sqrt{|x_M|}) & 0 \\
    0 & \diag(\sqrt{|w(x_M)|})
  \end{pmatrix}\Sigma.
\]
Since an orthogonal transformation of a Brownian motion is again a
Brownian motion, the $\Sigma$ on the right-hand side can be
absorbed in the underlying Brownian motion. The result follows.
\end{proof}
\medskip

Using the above we are able to give an alternative proof of the
existence and uniqueness results from \cite{dk96}, slightly
strengthening the statements made there, see
Remark~\ref{remark413} below. We first show how the well-known
condition for invariance of the open state space $\R^p_{>0}$
follows from Proposition~\ref{prop:nablaphi2}.
\begin{prop}\label{prop:feller}
Let $\theta(x)=\diag(x)$ and $\mathcal{X}=\R^p_{\geq0}$. Then
$\mathcal{X}^\circ$ is stochastically invariant if
\[
\forall i,\forall j\neq i: a_{ij}\geq0\mbox{ and } b_i\geq\half.
\]
\end{prop}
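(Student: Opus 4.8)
The plan is to invoke Proposition~\ref{prop:nablaphi2} with the single defining function $\Phi(x)=\prod_{i=1}^p x_i$. For this $\Phi$ the positive orthant $\R^p_{>0}$ is exactly the connected component of $\{\Phi>0\}$ whose closure is $\R^p_{\geq0}$: any path leaving $\R^p_{>0}$ inside $\{\Phi>0\}$ would have to cross a coordinate hyperplane, on which $\Phi=0$. Since $\theta(x)=\diag(x)$ forces $A^0=0$ and $A^i=e_ie_i^\top$ (with $e_i$ the $i$-th standard basis vector), all ingredients are explicit.

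First I would verify (\ref{al:phiv}). The gradient has entries $\partial_j\Phi(x)=\prod_{k\neq j}x_k$, so the $j$-th entry of $\nabla\Phi(x)\theta(x)$ is $x_j\prod_{k\neq j}x_k=\Phi(x)$; hence $\nabla\Phi(x)\theta(x)=\Phi(x)\mathbf{1}^\top$ and (\ref{al:phiv}) holds with the constant vector $v=\mathbf{1}=\vect(1,\ldots,1)$. Next $(A^i)^i=A^ie_i=e_ie_i^\top e_i=e_i$, so $\sum_{i=1}^p(A^i)^i=\mathbf{1}$ and condition (\ref{al:phiv2}) reduces to $\nabla\Phi(x)\bigl(\mu(x)-\half\mathbf{1}\bigr)\geq0$. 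The core computation is to expand this on $\R^p_{>0}$: writing $\mu_j(x)=b_j+\sum_l a_{jl}x_l$ and separating the diagonal $l=j$ from $l\neq j$ gives
\[
\nabla\Phi(x)\bigl(\mu(x)-\half\mathbf{1}\bigr)=\sum_{j}\bigl(b_j-\half\bigr)\prod_{k\neq j}x_k+\sum_{j\neq l}a_{jl}\,x_l^2\prod_{k\neq j,l}x_k+\tr(a)\prod_{i}x_i.
\]
On $\R^p_{>0}$ the first sum is nonnegative because $b_j\geq\half$, and the second because $a_{jl}\geq0$ for $j\neq l$; these are precisely the two hypotheses.

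The third, diagonal, term $\tr(a)\prod_i x_i$ carries the sign of $\tr(a)$ and is unconstrained, and this is the main obstacle. It is also why I would actually invoke the \emph{local} criterion of Proposition~\ref{prop:nablaphi} rather than the global form (\ref{al:phiv2}): when $\tr(a)<0$ the displayed quantity is strictly negative deep in the interior (already visible for a single critical CIR coordinate), so (\ref{al:phiv2}) fails on all of $\mathcal{X}$. It does hold near $\partial\mathcal{X}$, however. At a boundary-approaching point some coordinate $x_{j_0}<1$ is small, and since then $\prod_{k\neq j_0}x_k\geq\prod_i x_i$, the face term $(b_{j_0}-\half)\prod_{k\neq j_0}x_k$ dominates $|\tr(a)|\prod_i x_i$ once $x_{j_0}$ is small enough, provided $b_{j_0}>\half$. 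Thus under strict inequalities $b_j>\half$ one produces an open neighborhood $O$ of $\partial\mathcal{X}$ on which the inequality holds, and Proposition~\ref{prop:nablaphi} applies directly.

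The genuinely delicate point is the critical case $b_j=\half$, where the dominating face term vanishes and the log-drift criterion underlying Proposition~\ref{prop:nablaphi} is borderline; there I would obtain the result by a limiting/comparison argument (approximating the drift by $b_j+\varepsilon$, or comparing the $j$-th coordinate with a critical one-dimensional CIR diffusion whose origin is known to be unattainable). As a convenient preliminary I would first establish invariance of the closed orthant $\R^p_{\geq0}$ through Corollary~\ref{cor:stochinvarcan}, whose hypotheses $a_{ij}\geq0$ and $b_i\geq0$ are implied; this confines $X$ and guarantees that the off-diagonal drift contributions keep the favourable sign throughout.
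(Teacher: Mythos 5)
Your opening coincides with the paper's own proof: Proposition~\ref{prop:nablaphi2} applied with $\Phi(x)=x_1x_2\cdots x_p$, the identity $\nabla\Phi(x)\theta(x)=\Phi(x)\,(1,\ldots,1)$, the computation $\sum_{i=1}^p(A^i)^i=(1,\ldots,1)^\top$, and the same expansion of $\nabla\Phi(x)(\mu(x)-\half\sum_i(A^i)^i)$. You also correctly diagnose the obstruction: the diagonal entries $a_{ii}$ are unconstrained in the statement, so the global condition (\ref{al:phiv2}) can genuinely fail (the term $\tr(a)\prod_i x_i$ dominates far from the origin when $\tr(a)<0$). But here you and the paper part ways. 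The paper's device is a change of measure: with density $\mathcal{E}(\lambda^\top\sigma(X)\cdot W)$ (a true martingale by Corollary~A.9 of the companion paper), Girsanov shifts the drift by $\theta(x)\lambda=\diag(x)\lambda$, i.e.\ replaces $a_{ii}$ by $a_{ii}+\lambda_i$ while leaving $b$ and the off-diagonal entries of $a$ untouched; since the two measures are equivalent, invariance of $\mathcal{X}^\circ$ holds under one if and only if it holds under the other, so one may assume $a_{ii}\geq0$. Then every summand of $\sum_i\prod_{j\neq i}x_j(a_ix+b_i-\half)$ is nonnegative on all of $\R^p_{\geq0}$ and Proposition~\ref{prop:nablaphi2} applies globally, critical values $b_i=\half$ included.

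Your substitute --- checking the inequality only on a neighborhood $O$ of $\partial\mathcal{X}$ and invoking Proposition~\ref{prop:nablaphi} --- is sound as far as it goes, but it only goes as far as $b_j>\half$ for all $j$, and in the critical case it cannot be repaired: take $p=1$ and $\mu(x)=a_{11}x+\half$ with $a_{11}<0$ (the mean-reverting CIR process at the Feller boundary). The required inequality reads $a_{11}x_1\geq0$, which fails at every interior point of every neighborhood of $\{0\}$; the same phenomenon persists in higher dimensions (let $x_{j_0}\downarrow0$ with the remaining coordinates large, so that the negative trace term dominates the surviving face terms). Thus for the critical case --- which is the substantive content of the proposition, since it is exactly Feller's condition --- your proof consists of a deferral to ``a limiting/comparison argument.'' Of the two options you name, approximating $b_j$ by $b_j+\varepsilon$ proves nothing on its own, because invariance does not pass to parameter limits without precisely the kind of comparison argument that is at issue; the comparison of $X_j$ with a one-dimensional critical CIR process does work (invariance of the closed orthant via Corollary~\ref{cor:stochinvarcan} gives $\mu_j(X_t)\geq a_{jj}X_{j,t}+b_j$, and then a one-dimensional comparison theorem plus Feller's test for non-attainment of the origin finish the argument), but this would have to be carried out, and it replaces the entire mechanism of Propositions~\ref{prop:nablaphi} and~\ref{prop:nablaphi2} rather than supplementing it. As written, the proposal does not prove the stated result; the single missing idea relative to the paper is the Girsanov removal of the diagonal.
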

\begin{proof}
We apply Proposition~\ref{prop:nablaphi2} with
$\Phi(x)=\det\theta(x)=x_1 x_2\ldots x_p$. It holds that
\[
\nabla\Phi(x)\theta(x)=\Phi(x)
  \begin{pmatrix}
    1 & \ldots & 1
  \end{pmatrix},
\]
and $\sum_{i=1}^p (A^i)^i)=\begin{pmatrix}
    1 & \ldots & 1
  \end{pmatrix}^\top$, whence
\[
\nabla\Phi(x)(\mu(x)-\half\sum_{i=1}^p (A^i)^i)=\sum_i
\prod_{j\neq i} x_j (a_i x + b_i-\half).
\]
This is non-negative for all $x\in\mathcal{X}=\R^p_{\geq0}$ if
$a_{ij}\geq 0$ and $b_i\geq\half$ for all $i,j$. By applying a
measure transformation with density
$\mathcal{E}(\lambda^\top\sigma(X)\cdot W)$ for some
$\lambda\in\R^p$ (which yields a probability measure
by~\cite[Corollary~A.9]{part1}), we see that the sign of the
diagonal elements $a_{ii}$ is irrelevant for stochastic invariance
of $\mathcal{X}^\circ$.
\end{proof}
\begin{cor}\label{cor:polyinvar}
Suppose $\theta$ is of the form (\ref{eq:specialtheta}) and assume
$\mathcal{X}^\circ\not=\emptyset$,
$\mathcal{X}\subset\bigcap_{i=1}^p\{v_i\geq 0\}$ and
$\{\theta>0\}\not=\emptyset$. Then (\ref{eq:SDE}) is an affine SDE
for $\sigma=\Sigma\sqrt{|v|}$ if and only if, up to a
reparametrization of $\Sigma$ and $v$,
$\mathcal{X}=\bigcap_{i=1}^q\{v_i\geq
0\}=\{\theta\geq0\}=:\mathcal{D}$ for some $q\leq p$ and
\begin{align}
\forall i\leq q,\forall j\leq p&:\beta_i\Sigma^j=0\mbox{ or
}v_i=v_j,\label{al:wf1}\\
\forall i\leq q,\forall x\in\Di&:\beta_i(a x
+b)\geq0.\label{al:wf2}
\end{align}
Moreover, if we strengthen (\ref{al:wf2}) to
\begin{align}\label{eq:strongfeller}
\forall i\leq q,\forall x\in\Di:\beta_i(a x
+b)\geq\half\beta_i\Sigma\Sigma^\top\beta_i^\top,
\end{align}
then $\mathcal{D}^\circ=\bigcap_{i=1}^q\{v_i>0\}$ is
stochastically invariant.
\end{cor}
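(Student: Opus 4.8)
The plan is to route the statement through the general polyhedral criterion, namely Proposition~\ref{prop:necinvar} (the necessity half of Theorem~\ref{th:constructsigma}) and Proposition~\ref{prop:classicstrong} (sufficiency for the square root $\sigma=\Sigma\,\diag(\sqrt{|v|})$), and to translate the boundary conditions (\ref{al:cond1})--(\ref{al:cond2}) into the structural form (\ref{al:wf1})--(\ref{al:wf2}). First I would record what $\{\theta>0\}\neq\emptyset$ buys us: any nonzero $z$ with $\Sigma^\top z=0$ would give $z^\top\theta(x)z=\sum_i v_i(x)(z^\top\Sigma^i)^2=0$ for all $x$, contradicting strict positivity somewhere, so $\Sigma$ is nonsingular. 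Consequently $\theta(x)\geq0$ iff $v_i(x)\geq0$ for all $i$, whence $\mathcal{D}=\bigcap_i\{v_i\geq0\}$; each constant $v_i$ must be a \emph{positive} constant (otherwise $\{\theta>0\}=\emptyset$) and is redundant, so it is the nonconstant $v_i$ that carve out the facets of $\mathcal{D}$.

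For the equivalence itself, note that being an affine SDE with state space $\mathcal{X}$ is, under the standing hypotheses $\mathcal{X}^\circ\neq\emptyset$ and $\mathcal{X}\subset\bigcap\{v_i\geq0\}$, equivalent to (\ref{al:cond1})--(\ref{al:cond2}): necessity is Proposition~\ref{prop:necinvar}, sufficiency is Proposition~\ref{prop:classicstrong}. In the forward direction I would start from (\ref{al:cond1}) on a facet of $\mathcal{X}$ with normal $\gamma_i$: since $\Sigma^\top$ is nonsingular, $\gamma_i\theta(x)=0$ is equivalent to $(\gamma_i\Sigma^j)\,v_j(x)=0$ for every $j$ on that facet. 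Thus every $v_j$ with $\gamma_i\Sigma^j\neq0$ vanishes on the $(p-1)$-dimensional facet and is therefore a positive multiple of the facet's affine function; as $\gamma_i\Sigma\neq0$ at least one such $j$ exists, so every facet of $\mathcal{X}$ has the form $\{v_j=0\}$. Hence $\mathcal{X}$ is an intersection of half-spaces $\{v_j\geq0\}$, giving $\mathcal{X}\supseteq\mathcal{D}$, and with $\mathcal{X}\subseteq\mathcal{D}$ we get $\mathcal{X}=\mathcal{D}$. Rescaling $v_j\mapsto c_jv_j$ and $\Sigma^j\mapsto c_j^{-1/2}\Sigma^j$ (which changes neither $\theta$ nor $\sigma$) so that mutually proportional $v_j$ coincide, and labelling the $q$ distinct facet functions $v_1,\dots,v_q$, turns ``$\beta_i\Sigma^j\neq0\Rightarrow v_j\propto v_i$'' into (\ref{al:wf1}), while (\ref{al:cond2}) is precisely (\ref{al:wf2}). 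The converse is the one-line check $\beta_i\theta(x)=\sum_j(\beta_i\Sigma^j)v_j(x)(\Sigma^j)^\top$, each term of which vanishes on $\Di$ (either $\beta_i\Sigma^j=0$, or $v_j=v_i=0$ by (\ref{al:wf1})), so (\ref{al:cond1}) holds and Proposition~\ref{prop:classicstrong} applies.

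For the ``moreover'' part I would imitate the proof of Proposition~\ref{prop:feller}, applying Proposition~\ref{prop:nablaphi2} to $\Phi=\prod_{i=1}^q v_i$, whose positivity set has $\mathcal{D}^\circ=\bigcap_{i\leq q}\{v_i>0\}$ as a connected component, and $\nabla\Phi=\sum_{i\leq q}(\prod_{k\leq q,\,k\neq i}v_k)\beta_i$. Condition (\ref{al:phiv}) drops out of (\ref{al:wf1}): a term of $\nabla\Phi(x)\theta(x)$ survives only when $\beta_i\Sigma^j\neq0$, forcing $v_j=v_i$ and hence $(\prod_{k\leq q,\,k\neq i}v_k)v_j=\Phi$, so $\nabla\Phi(x)\theta(x)=\Phi(x)\,(\sum_{i\leq q}\beta_i)\Sigma\Sigma^\top$ with constant second factor. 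Using $\sum_{i=1}^p(A^i)^i=\sum_j(\beta_j\Sigma^j)\Sigma^j$ together with the fact that $\beta_i\Sigma^j\neq0$ forces $\beta_i=\beta_j$ (equal affine functions have equal gradients), the left-hand side of (\ref{al:phiv2}) collapses to $\sum_{i\leq q}(\prod_{k\leq q,\,k\neq i}v_k)\,\psi_i$ with $\psi_i=\beta_i(ax+b)-\half\,\beta_i\Sigma\Sigma^\top\beta_i^\top$, and (\ref{eq:strongfeller}) is exactly $\psi_i\geq0$ on $\Di$.

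The main obstacle is the final gap: (\ref{eq:strongfeller}) only gives $\psi_i\geq0$ on the face $\{v_i=0\}$, whereas (\ref{al:phiv2}) needs $\sum_{i\leq q}(\prod_{k\leq q,\,k\neq i}v_k)\psi_i\geq0$ on all of $\mathcal{D}$, and the ``diagonal'' part of the drift (the analogue of $a_{ii}$ in Proposition~\ref{prop:feller}) is not controlled. I would resolve this precisely as there: apply an equivalent change of measure with density $\mathcal{E}(\lambda^\top\sigma(X)\cdot W)$, which is a genuine probability measure by \cite[Corollary~A.9]{part1} and, by Girsanov, replaces the drift by $\mu+\theta\lambda$ while leaving $\theta$ and hence (\ref{al:phiv}) intact. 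Because (\ref{al:wf1}) gives $\beta_i\theta=v_i\zeta_i$ for a constant $\zeta_i$, this perturbs $\psi_i$ only by a multiple of $v_i$, so its values on $\Di$---and thus (\ref{eq:strongfeller})---are preserved, while $\lambda$ can be chosen (a Farkas-type argument on the polyhedron $\mathcal{D}$) so that each $\psi_i\geq0$ on all of $\mathcal{D}$. Proposition~\ref{prop:nablaphi2} then yields invariance of $\mathcal{D}^\circ$ under the new measure, and since a.s.\ non-exit of an open set is invariant under an equivalent change of measure, $\mathcal{D}^\circ$ is stochastically invariant under the original measure as well.
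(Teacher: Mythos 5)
Your first part follows the paper's own route: nonsingularity of $\Sigma$ from $\{\theta>0\}\neq\emptyset$, hence $\mathcal{D}=\bigcap_{i=1}^p\{v_i\geq0\}$ and $\mathcal{X}=\mathcal{D}$, necessity from Proposition~\ref{prop:necinvar}, sufficiency from Proposition~\ref{prop:classicstrong}, and the translation of (\ref{al:cond1})--(\ref{al:cond2}) into (\ref{al:wf1})--(\ref{al:wf2}); your facet-dimension argument is exactly what the paper gets from Proposition~\ref{prop:uiscv}. For the ``moreover'' part you genuinely diverge: the paper rescales $u_i=(\beta_i\Sigma\Sigma^\top\beta_i^\top)^{-1}v_i$, uses Proposition~\ref{prop:LthetaLT} to pass to $Y=(LX+\ell)_Q$ with diffusion matrix $\diag(y)$ and state space $\R^q_{\geq0}$, and then just cites Proposition~\ref{prop:feller}; you instead inline the proof of Proposition~\ref{prop:feller} in the original coordinates, applying Proposition~\ref{prop:nablaphi2} with $\Phi=\prod_{i\leq q}v_i$. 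Your computations are correct: under (\ref{al:wf1}) one indeed gets $\nabla\Phi(x)\theta(x)=\Phi(x)\bigl(\sum_{i\leq q}\beta_i\bigr)\Sigma\Sigma^\top$ and the collapse of (\ref{al:phiv2}) to $\sum_{i\leq q}\bigl(\prod_{k\neq i}v_k\bigr)\psi_i$ with $\psi_i=\beta_i(ax+b)-\half\beta_i\Sigma\Sigma^\top\beta_i^\top$, and your change-of-measure mechanism (drift $\mapsto\mu+\theta\lambda$, with $\beta_i\theta=v_i\,\beta_i\Sigma\Sigma^\top$, so values of $\psi_i$ on $\Di$ are untouched) is sound.

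The one under-justified step is the existence of $\lambda$. You need a single $\lambda$ with $\psi_i+v_i\,(\beta_i\Sigma\Sigma^\top\lambda)\geq0$ on all of $\mathcal{D}$ for every $i\leq q$, and you attribute this to ``a Farkas-type argument on the polyhedron $\mathcal{D}$''. That cannot work as stated: for a polyhedron whose facet normals admit a nontrivial non-negative dependence (e.g.\ a slab, $v_1=x_1$, $v_2=1-x_1$, so $\beta_1+\beta_2=0$), the system $\beta_i\Sigma\Sigma^\top\lambda\geq d_i$, $i\leq q$, is infeasible for some right-hand sides, so no argument using only the geometry of $\mathcal{D}$ suffices. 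What rescues the claim is (\ref{al:wf1}) itself: for $i\neq j\leq q$ the functions $v_i,v_j$ are non-proportional, so every term of $\beta_i\Sigma\Sigma^\top\beta_j^\top=\sum_k(\beta_i\Sigma^k)(\beta_j\Sigma^k)$ vanishes (a nonzero term would force $v_k=v_i$ and $v_k=v_j$ simultaneously). Hence the nonzero vectors $\beta_i\Sigma$, $i\leq q$, are mutually orthogonal, the map $\lambda\mapsto(\beta_i\Sigma\Sigma^\top\lambda)_{i\leq q}$ is onto $\R^q$, and any prescribed values are attainable. To know which values to prescribe, use Proposition~\ref{prop:driftpos}: since $\psi_i\geq0$ on $\Di$, write $\psi_i=\sum_{j\leq q}\lambda_{ij}v_j+c_i$ with $\lambda_{ij}\geq0$ for $j\neq i$ and $c_i\geq0$, and choose $\beta_i\Sigma\Sigma^\top\lambda\geq-\lambda_{ii}$; then $\psi_i+v_i\,\beta_i\Sigma\Sigma^\top\lambda\geq0$ on $\mathcal{D}$. (In passing, this orthogonality also shows the slab cannot occur under (\ref{al:wf1}).) With this supplement your route is complete; note that the paper's coordinate change avoids the issue entirely, since for $\theta=\diag(y)$ the perturbation $\theta\lambda$ shifts each diagonal drift coefficient independently, so feasibility is trivial there.
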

\begin{proof}
Note that $\{\theta>0\}\neq\emptyset$ implies $\Sigma$ is
non-singular. Hence $\theta\geq0$ if and only if
$\Sigma^{-1}\theta(\Sigma^{-1})^\top=\diag(v)\geq0$, so
\begin{align}\label{eq:Disv}
\mathcal{D}=\bigcap_{i=1}^p\{v_i\geq0\}.
\end{align}
Therefore, if $\mathcal{X}$ is stochastically invariant, then
$\mathcal{X}=\mathcal{D}$, since $\theta(x)$ is singular for
$x\in\partial\mathcal{X}$ by condition (\ref{al:cond1}), required
for invariance. For the first part of the proposition it remains
to show that conditions~(\ref{al:cond1}) and (\ref{al:cond2}) are
equivalent to (\ref{al:wf1}) and (\ref{al:wf2}). Permuting and
rescaling the elements in $v$ and column vectors in $\Sigma$ we
may assume $v_i=u_i$ for $i\leq q$, i.e.\
\[
\mathcal{X}=\bigcap_{i=1}^q\{v_i\geq0\}.
\]
Now condition~(\ref{al:cond1}) reads
\[
\forall i\in Q,j\in P:\beta_i\Sigma^j=0\mbox{ or }v_j=c v_i \mbox{
for some }c>0,
\]
by Proposition~\ref{prop:uiscv}. The constant $c$ can be taken
equal to $1$ by rescaling $v$ and $\Sigma$. This gives the
equivalence between (\ref{al:cond1}) and (\ref{al:cond2}), while
the equivalence between (\ref{al:wf1}) and (\ref{al:wf2}) is
immediate.

For the second part of the proposition we follow the proof of
Proposition~\ref{prop:LthetaLT} up to Step~4. If we take
\[
u_i=(\beta_i\Sigma\Sigma^\top\beta_i^\top)^{-1} v_i,\mbox{ for
$i\leq q$},
\]
then (\ref{eq:Bi}) gives
\[
c_i=B_i\gamma_i^\top=v_i(x_0)^{-1}(\beta_i\Sigma\Sigma^\top\beta_i^\top)^{-1}
\beta_i\Sigma\,\diag(v(x_0))\Sigma^\top\beta_i^\top=1,
\]
by choosing $x_0\in\mathcal{D}^\circ$ such that $v_i(x_0)=1$ for
$i\leq q$. Hence the affine transformation $Lx+\ell$ from
Proposition~\ref{prop:LthetaLT} satisfies
\[
(Lx+\ell)_i=u_i(x)=(\beta_i\Sigma\Sigma^\top\beta_i^\top)^{-1}
v_i(x),\mbox{ for $i\in Q$}.
\]
Let $X$ be an affine diffusion with SDE~(\ref{eq:SDE}). We have
that $Y:=(LX+\ell)_Q$ satisfies an affine SDE of the form
\[
\dd Y_t =\wmu(Y_t)\dd t+\diag(\sqrt{|Y_t|})\dd W_t,
\]
with state space $\R^q_{\geq0}$, where we write
\[
\wmu(x)=L\mu(L^{-1}(x-\ell))=\wa x+\wb,
\]
for some matrix $\wa$ with non-negative off-diagonal elements and
vector $\wb$ with non-negative components. Note that
$\mathcal{D}^\circ$ is invariant for $X$ if and only if
$\R^q_{>0}$ is invariant for $Y$. By
Proposition~\ref{prop:feller}, the latter holds if
$\wb_i\geq\half$ for all $i$, or equivalently, if
$\wmu_i(y)\geq\half$ for all $y\in\R^q_{\geq 0}$ with $y_i=0$.
Substituting $y=Lx+\ell$ gives
\[
\mbox{$\wmu_i(Lx+\ell)\geq\half$ for all $x\in\Di$}.
\]
Since $\wmu(x)=L\mu(L^{-1}(x-\ell))$ and
$L_i=(\beta_i\Sigma\Sigma^\top\beta_i^\top)^{-1}\beta_i$, the
result follows.
\end{proof}
\begin{remark}\label{remark413}
In \cite{dk96} it is assumed that $q=p$. Moreover, the strong
existence and uniqueness is only proved under (\ref{al:wf1}) and
(\ref{eq:strongfeller}), whereas the case that the process $X$
might hit the boundary $\partial\mathcal{X}$ is not treated. Note
also that in \cite{dk96} the inequality in (\ref{eq:strongfeller})
is strict.
\end{remark}
\section{Quadratic state space}\label{sec:quadratic}
In this section we consider affine diffusions where the boundary
of the state space $\mathcal{X}$ is quadratic instead of linear.
Let us be given a quadratic function
%i.e.\
%\begin{align}\label{eq:xquadratic}
%\partial\mathcal{X}\subset \{\Phi=0\},
%\end{align}
%with
\begin{align}\label{eq:quadraticPhi}
\Phi(x)=x^\top A x + b^\top x+ c,
\end{align}
for some symmetric non-zero $A\in\R^{p\times p}$, $b\in\R^p$,
$c\in\R$.  We take $\mathcal{X}=\{\theta\geq0\}$ where $\theta$ is
given by (\ref{eq:affinetheta}) (thus $\mathcal{X}$ is convex) and
we assume $\mathcal{X}^\circ$ is a non-empty connected component
(maximal connected subset) of $\{\Phi>0\}$ or $\{\Phi<0\}$. Note
that then automatically the boundary of the state space is
quadratic, i.e.\ $\partial\mathcal{X}\subset \{\Phi=0\}$.
 By the following proposition, there are only three types possible for $\Phi$.
\begin{prop}\label{prop:twophis}
Let $\mathcal{X}\subset\R^p$ be convex and assume
$\mathcal{X}^\circ$ is a non-empty connected component of
$\{\Phi>0\}$ or $\{\Phi<0\}$, with $\Phi$ given by
(\ref{eq:quadraticPhi}). Then there exists an affine
transformation such that either $\Phi(x)=x_1-\sum_{i=2}^q x_i^2$,
$\Phi(x)=\sum_{i=1}^q x_i^2 + d$ or $\Phi(x)=x_1^2-\sum_{i=2}^q
x_i^2+d$, for some $1\leq q\leq p$, $d\in\R$.
\end{prop}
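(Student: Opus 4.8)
The plan is to first put $\Phi$ into a normal form by a sequence of affine changes of variable, and then to invoke convexity of $\mathcal{X}$ to cut down the possible signatures. Since $A$ is symmetric, I would begin by diagonalizing it with an orthogonal matrix and rescaling each eigendirection belonging to a nonzero eigenvalue by $|\lambda_i|^{-1/2}$, so that after this orthogonal-plus-diagonal linear map the quadratic part becomes $\sum_{i=1}^{p_+}x_i^2-\sum_{i=p_++1}^{p_++p_-}x_i^2$, where $(p_+,p_-,p_0)$ is the signature of $A$. Completing the square in each of the $p_++p_-$ nonzero-eigenvalue directions removes the corresponding linear terms at the cost of changing the constant, and a rotation within the $p_0$-dimensional kernel of $A$ consolidates whatever linear part survives into a single coordinate. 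Thus, up to an affine change of variable,
\[
\Phi(x)=\sum_{i=1}^{p_+}x_i^2-\sum_{i=p_++1}^{p_++p_-}x_i^2+\varepsilon\, x_k+c',
\]
with $\varepsilon\in\{0,1\}$, $x_k$ a kernel coordinate and $c'\in\R$. All of this is routine linear algebra; since $A\neq0$ we have $p_++p_-\geq1$. Because the hypothesis is symmetric under $\Phi\mapsto-\Phi$ (which merely interchanges $\{\Phi>0\}$ and $\{\Phi<0\}$ and swaps $p_+\leftrightarrow p_-$), I may assume $\mathcal{X}^\circ$ is a component of $\{\Phi>0\}$.

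The heart of the argument, and the step I expect to be the main obstacle, is to show that convexity of $\mathcal{X}=\overline{\mathcal{X}^\circ}$ forces $p_+\leq1$, and moreover that $p_+=1$ is incompatible with the presence of a linear term. Both follow from one geometric observation: if $e_1$ is a direction with $e_1^\top A e_1>0$ that happens to be tangent to the boundary at some smooth boundary point $x_0$, then convexity is violated. Concretely, at a point $x_0\in\partial\mathcal{X}$ with $\nabla\Phi(x_0)\neq0$ and $\nabla\Phi(x_0)e_1=0$, the exact expansion $\Phi(x_0+te_1)=\Phi(x_0)+t\nabla\Phi(x_0)e_1+t^2 e_1^\top Ae_1=t^2e_1^\top Ae_1>0$ shows that $x_0\pm t e_1\in\{\Phi>0\}$ for small $t\neq0$; since $\nabla\Phi(x_0)\neq0$ the set $\{\Phi>0\}$ is locally a smooth half-space-like region, so both points lie in the same component $\mathcal{X}^\circ$, while their midpoint $x_0\notin\mathcal{X}^\circ$, contradicting convexity. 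It then remains to produce such a tangent configuration whenever $p_+\geq2$, or whenever $p_+=1$ and $\varepsilon=1$. For $p_+\geq2$ this is automatic: at any smooth boundary point the tangent space has dimension $p-1$ and meets the $p_+$-dimensional positive eigenspace in dimension at least $p_++(p-1)-p\geq1$, yielding a tangent positive direction. For $p_+=1$ with $\varepsilon=1$ I would instead exhibit an explicit smooth boundary point at which $e_1$ is tangent, namely one with first coordinate $x_{0,1}=0$ (so that $\nabla\Phi(x_0)e_1=2x_{0,1}=0$); such a point exists because the surviving term $\varepsilon x_k$ lets one solve $\Phi(x_0)=0$ freely, and it is smooth since $\partial\Phi/\partial x_k=\varepsilon=1\neq0$ guarantees $\nabla\Phi(x_0)\neq0$.

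With these constraints in hand the classification is a short case analysis on $(p_+,\varepsilon)$. If $p_+=0$ then $A$ is negative semidefinite and, since $A\neq0$, $p_-\geq1$; when $\varepsilon=1$ a final translation absorbing $c'$ into $x_k$ together with an affine relabelling gives $\Phi(x)=x_1-\sum_{i=2}^q x_i^2$, the parabolic form, while when $\varepsilon=0$ one has $\Phi=-\sum_{i=1}^{p_-}x_i^2+c'$, which after replacing $\Phi$ by $-\Phi$ becomes $\sum_{i=1}^q x_i^2+d$ with $d=-c'$, the spherical form. If $p_+=1$ then $\varepsilon=0$ by the obstacle step, so $\Phi=x_1^2-\sum_{i=2}^{q}x_i^2+c'$, the conical form with $d=c'$. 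In every case the remaining $p-q$ coordinates, on which $\Phi$ does not depend, account for the freedom $q\leq p$, which completes the proof.
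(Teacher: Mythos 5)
Your route is genuinely different from the paper's: after the common normalization, the paper represents $\mathcal{X}$ as a region bounded by the graph of a function and extracts sign constraints on the Hessian of $f^{1/2}$ (killing mixed signatures with a $2\times2$ minor computation), whereas you argue via a tangent direction of positive curvature and the midpoint property of the convex set $\mathcal{X}^\circ$. That geometric lemma itself is sound. But your pivotal claim, that convexity forces $p_+\leq 1$, is false, and the counterexample is not exotic: take $\Phi(x)=x_1^2+x_2^2+1$, or more generally $\sum_{i=1}^q x_i^2+d$ with $d>0$, $q\geq2$. Then $\{\Phi>0\}=\R^p$ is connected, so $\mathcal{X}^\circ=\R^p$ and $\mathcal{X}=\R^p$ is convex, yet $p_+=q\geq 2$. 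This is precisely the spherical normal form that the proposition's conclusion allows, and your final case analysis, which treats only $p_+\in\{0,1\}$ and obtains the spherical form solely through $p_+=0$ and negation, never produces it. The reason your proof of the claim silently breaks here is that it presupposes a boundary point with $\nabla\Phi\neq0$: when $\{\Phi>0\}=\R^p$ there is no boundary point at all, smooth or otherwise. The paper's dichotomy is deliberately weaker --- in your notation, $p_+\leq1$ \emph{or} $p_-\leq1$ (its ``$Q'=\emptyset$ or $\#Q\leq1$'') --- so the degenerate spherical case lands harmlessly on the second horn. To repair your argument you must split off the case $\partial\mathcal{X}=\emptyset$ (then $\Phi>0$ everywhere forces $A\geq0$, $\varepsilon=0$, $c'>0$, i.e.\ the spherical form) and, when $\partial\mathcal{X}\neq\emptyset$, actually prove that a smooth boundary point exists before hunting for tangent directions.

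A second, smaller gap: in the case $p_+=1$, $\varepsilon=1$ you exhibit a point $x_0$ with $\Phi(x_0)=0$ and $\nabla\Phi(x_0)\neq0$, but your lemma needs $x_0\in\partial\mathcal{X}$, i.e.\ on the boundary of the \emph{given} component $\mathcal{X}^\circ$; otherwise the midpoint argument only shows that some other component of $\{\Phi>0\}$ fails to be convex, which contradicts nothing. This is fixable --- in that normal form $\{\Phi>0\}$ is the region strictly above the graph $x_k=\sum_{i\geq2}x_i^2-x_1^2-c'$ and hence connected, so the component is unique --- but the step is missing as written. The same remark applies to your $p_+\geq2$ step, where existence of a smooth boundary point is asserted rather than proved.
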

\begin{proof}
Since $A$ is symmetric, it is diagonalizable by an orthogonal
matrix. By further scaling one can take the diagonal elements
equal to $-1$, $0$ or $1$. Using the equality
\[
x^\top x + b^\top x = (x^\top + \half b^\top)(x+\half b)-\quart
b^\top b,
\]
we can apply an affine transformation such that for some disjoint
$Q,Q'\subset P\backslash\{1\}$ the quadratic function $\Phi$ is of
the form
\begin{align}\label{eq:absurdPhi2}
\Phi(x)=x_1-\sum_{i\in Q} x_i^2 +\sum_{i\in Q' } x_i^2,
\end{align}
or
\begin{align}\label{eq:absurdPhi}
\Phi(x)=x_1^2-\sum_{i\in Q} x_i^2 +\sum_{i\in Q' } x_i^2+d,
\end{align}
for some $d\in\R$.  If $\Phi$ is of the form
(\ref{eq:absurdPhi2}), then $\mathcal{X}$ is of the form
\[
\mathcal{X}=\{x\in\R^p:x_1\geq \sum_{i\in Q} x_i^2 -\sum_{i\in Q'
} x_i^2\},
\]
possibly after replacing $x_1$ by $-x_1$ and interchanging $Q$ and
$Q'$. Convexity of $\mathcal{X}$ yields that the Hessian of
$\sum_{i\in Q} x_i^2 -\sum_{i\in Q' } x_i^2$ is positive
semi-definite, which implies that $Q'=\emptyset$. Permuting
coordinates gives $Q=\{2,\ldots,q\}$ with $q=\# Q+1$.

Now assume $\Phi$ is of the form (\ref{eq:absurdPhi}). We have to
show that either $Q'=\emptyset$ or $\# Q\leq 1$. Define the
function $f$ by $f(x_{Q\cup Q'})=\sum_{i\in Q} x_i^2 -\sum_{i\in
Q' } x_i^2-d$. There are two possible forms $\mathcal{X}$ can
assume, namely
\begin{align*}
\mathcal{X}&=\{x\in\R^p:x_1\geq f(x_{Q\cup Q'})^{1/2},x_{Q\cup
Q'}\in K\}\\
\mbox{or }\mathcal{X}&=\{x\in\R^p:|x_1|\leq f(x_{Q\cup
Q'})^{1/2},x_{Q\cup Q'}\in K\},
\end{align*}
where $K$ is convex with $K^\circ$ a non-empty connected component
of $\{f\geq 0\}$. In the first case we have that the Hessian of
$f(x_{Q\cup Q'})^{1/2}$ is positive semi-definite, while in the
second case the Hessian is negative semi-definite. Now suppose
$Q'\neq\emptyset$. We show that in that case $\#Q \leq 1$. Let
$x_{Q\cup Q'}\in K^\circ$. For $i\in Q'$ we have
\[
  \frac{\partial^2}{\partial x_i^2} f(x_{Q\cup Q'})^{1/2} = -f(x_{Q\cup
  Q'})^{-1/2}-x_i^2 f(x_{Q\cup Q'})^{-3/2}<0,
\]
hence the Hessian is negative semi-definite. For $i\in Q$ we have
\begin{align*}
\frac{\partial^2}{\partial x_i^2} f(x_{Q\cup Q'})^{1/2} &=
f(x_{Q\cup
  Q'})^{-1/2}-x_i^2 f(x_{Q\cup Q'})^{-3/2},
\end{align*}
which therefore also has to be negative. Now if $i,j\in Q$ and
$i\neq j$, then
\[
\frac{\partial^2}{\partial x_i\partial x_j} f(x_{Q\cup Q'})^{1/2}
= -f(x_{Q\cup
  Q'})^{-3/2}x_ix_j,
\]
from which we deduce that
\begin{align*}
&\det
  \begin{pmatrix}
    \frac{\partial^2}{\partial x_i^2} & \frac{\partial^2}{\partial x_i\partial x_j} \\
    \frac{\partial^2}{\partial x_j\partial x_i} & \frac{\partial^2}{\partial x_j^2}
  \end{pmatrix} f(x_{Q\cup Q'})^{-1/2}= \\&\qquad= f(x_{Q\cup
  Q'})^{-1/2}\left(f(x_{Q\cup
  Q'})^{-1/2}-(x_i^2+x_j^2) f(x_{Q\cup Q'})^{-3/2}\right)<0.
\end{align*}
This contradicts the negative semi-definiteness of the Hessian.
Thus it holds that $\# Q\leq 1$.
%\[
%
%\frac{\partial^2}{\partial x_i\partial x_j} f(x_{Q\cup Q'})^{1/2}
%= -f(x_{Q\cup
%  Q'})^{-3/2}x_ix_j.
%\]
\end{proof}
\medskip

If (\ref{eq:SDE}) is an affine SDE with drift $\mu$, diffusion
matrix $\theta$ and state space $\mathcal{X}=\{\theta\geq0\}$ with
non-empty quadratic boundary $\partial\mathcal{X}\subset
\{\Phi=0\}$, then stochastic invariance of $\mathcal{X}$ yields
\begin{align}\label{eq:gammaxthetaxis0}
\nabla\Phi(x) \theta(x) = 0,\mbox{ for all
}x\in\partial\mathcal{X}.
\end{align}
by Proposition~\ref{prop:necinvar} and the remark preceding
Proposition~\ref{prop:invargeneral}. This excludes that $\Phi$ is
of the form $\Phi(x)=\sum_{i=1}^q x_i^2 + d$ or
$\Phi(x)=x_1^2-\sum_{i=2}^q x_i^2+d$ with $d\neq 0$. Indeed,
suppose $\Phi(x)=\sum_{i=1}^p x_i^2 + d$ with $d<0$ (for
simplicity we take $q=p$), then (\ref{eq:gammaxthetaxis0}) reads
\[
x^\top \theta(x) = 0, \mbox{ for all $x$ such that $\sum_{i=1}^p
x_i^2 =-d$}.
\]
By Lemma~\ref{lem:psiiscphi} below and the observation that the
degree of $x^\top\theta(x)$ does not exceed the degree of
$\Phi(x)$, it follows that
%Each component $x^\top\theta^i(x)$ is a multivariate polynomial of
%maximal degree 2. Regarding $x^\top\theta^i(x)$ as a univariate
%polynomial in $x_1$, we see that it has roots $\pm(d-\sum_{j\neq
%1}x_j^2)^{1/2}$, whence by the factor theorem for polynomials it
%holds that
\[
x^\top\theta(x)= (\sum_{i=1}^p x_i^2 +d)c^\top,\,\mbox{ for all
}x\in\R^p,
\]
for some constant vector $c\in\R^p$. However, since $x^\top
\theta(x)$ has no constant terms, this yields $x^\top \theta(x) =
0$ for all $x\in\R^p$. Lemma~\ref{lem:thetais0} below gives that
$\theta(x)=0$ for all $x$, which contradicts the assumption that
$\mathcal{X}=\{\theta\geq0\}$ and
$\partial\mathcal{X}\neq\emptyset$. Likewise one can show that
$\Phi(x)=x_1^2-\sum_{i=2}^q x_i^2+d$ with $d\neq 0$ is impossible.
\begin{lemma}\label{lem:psiiscphi}
Let $\Phi,\Psi:\R^p\rightarrow\R$ be multivariate polynomials and
$O\subset\R^p$ a non-empty open set such that for $x\in O$ we have
$\Psi(x)=0$ whenever $\Phi(x)=0$. Suppose in addition that for
$x\in O$, $\Phi(x)$ is of the form
\[
\Phi(x)=\prod_{j=1}^d (x_1-r_j(x_{P\backslash\{1\}})),
\]
for some $d\in\N$ and some $r_j(x_{P\backslash\{1\}})\in\R$ which
are mutually different for all $x\in O$. Then
$\Psi(x)=C(x)\Phi(x)$ for some multivariate polynomial $C$.
%for all $i\leq p$ and $x(i)\in O_i$ it holds that $x_i\mapsto
%\Phi(x)$ is a univariate polynomial with degree $d_i$ independent
%of $x(i)$ and equal to the number of distinct roots. Then
%$\Psi(x)=C(x)\Phi(x)$ for some multivariate polynomial $C$.
\end{lemma}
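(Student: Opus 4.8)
The plan is to treat both $\Phi$ and $\Psi$ as polynomials in the single variable $x_1$, with coefficients that are polynomials in the remaining variables $x_{P\backslash\{1\}}$, and to divide $\Psi$ by $\Phi$. First I would establish that, viewed in $\R[x_2,\ldots,x_p][x_1]$, the polynomial $\Phi$ is monic of degree exactly $d$ in $x_1$. This follows from the factorization hypothesis: for a fixed value $y^*$ of $x_{P\backslash\{1\}}$ whose $x_1$-slice in $O$ contains an open interval, the two polynomials $x_1\mapsto\Phi(x_1,y^*)$ and $x_1\mapsto\prod_{j=1}^d(x_1-r_j(y^*))$ agree on that interval, hence coincide as polynomials in $x_1$; so $\Phi(\cdot,y^*)$ is monic of degree $d$. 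Since the coefficient of $x_1^k$ in $\Phi$ is itself a polynomial in $x_{P\backslash\{1\}}$ that equals $1$ (for $k=d$) or $0$ (for $k>d$) on the non-empty open set of such $y^*$, these coefficient-polynomials are identically $1$ resp.\ $0$, so $\Phi$ is globally monic of degree $d$ in $x_1$.

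Because $\Phi$ is monic in $x_1$, Euclidean division is available over the coefficient ring without introducing denominators: I would write $\Psi = C\,\Phi + S$ with $C,S\in\R[x_2,\ldots,x_p][x_1]$ and $\deg_{x_1}S\le d-1$. The remaining task is to show $S=0$, since then $\Psi=C\Phi$ with $C$ a genuine multivariate polynomial, as claimed. To do this I would evaluate at the roots of $\Phi$. Fix $y$ in the open set of $x_{P\backslash\{1\}}$-values for which the factorization holds with $d$ mutually distinct real roots $r_1(y),\ldots,r_d(y)$, each root being the $x_1$-coordinate of a point $(r_j(y),y)\in O$. At each such point $\Phi$ vanishes, so by hypothesis $\Psi(r_j(y),y)=0$, and therefore $S(r_j(y),y)=\Psi(r_j(y),y)-C(r_j(y),y)\Phi(r_j(y),y)=0$ as well. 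Thus the one-variable polynomial $x_1\mapsto S(x_1,y)$, of degree at most $d-1$, has $d$ distinct roots and hence vanishes identically in $x_1$; equivalently, every coefficient of $S$ (a polynomial in $x_{P\backslash\{1\}}$) vanishes at this $y$. As $y$ ranges over a non-empty open set, each such coefficient vanishes on an open set and is therefore identically zero, giving $S=0$.

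The step I expect to be the main obstacle is the reduction that justifies evaluating $\Psi$ at the roots, namely that for an open set of values $y$ of $x_{P\backslash\{1\}}$ the $d$ distinct roots $r_j(y)$ of $\Phi(\cdot,y)$ correspond to points $(r_j(y),y)$ lying in $O$ --- this is exactly what couples the purely algebraic division to the hypothesis ``$\Psi=0$ whenever $\Phi=0$'' on $O$, and it is what the distinctness of the $r_j$ on $O$ is there to guarantee (each such root is a simple zero of $\Phi$, so the zero set of $\Phi$ inside $O$ is a smooth hypersurface meeting every root branch). Once this geometric input is in hand, the algebra --- monicity, division, and counting roots of the remainder --- is routine.
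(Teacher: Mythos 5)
Your proof is correct in substance but takes a genuinely different route from the paper's. The paper fixes $x_{P\backslash\{1\}}$ and applies the factor theorem slice-wise: since $\Psi(\cdot,y)$ vanishes at the $d$ distinct roots of $\Phi(\cdot,y)$, it gets $\Psi=C\Phi$ on $O$ with $C$ a polynomial in $x_1$; it then argues (rather tersely) that $C$ is in fact a multivariate polynomial, and finally extends the identity from $O$ to all of $\R^p$ by a uniqueness theorem for holomorphic functions, with a citation to Rudin. You instead work globally from the start: you show $\Phi$ is monic of degree $d$ in $x_1$ over the ring $\R[x_{P\backslash\{1\}}]$, perform Euclidean division $\Psi=C\,\Phi+S$ in that ring, and kill the remainder $S$ by root-counting on each slice together with the elementary fact that a polynomial vanishing on a non-empty open set vanishes identically. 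This buys you two things the paper has to work for: the polynomiality of $C$ is automatic (it is produced by the division algorithm), and no analytic-continuation argument is needed, because $\Psi=C\Phi$ is an identity of polynomials from the outset. The paper's version is shorter on the page, but leans on the external citation and on the unelaborated claim that the slice-wise quotient is jointly polynomial; your monicity step makes that point rigorous.

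One caveat, which affects both proofs equally. Both arguments need the points $(r_j(y),y)$ to lie in $O$ for (at least an open set of) $y$, since the hypothesis only forces $\Psi$ to vanish at zeros of $\Phi$ \emph{inside} $O$. You correctly single this out as the crux, but your justification --- that distinctness of the roots makes the zero set of $\Phi$ a smooth hypersurface ``meeting every root branch'' --- does not follow: distinctness gives smoothness of $\{\Phi=0\}$ near its points, but nothing prevents $O$ from missing the zero set entirely. Indeed, take $\Phi=x_1$, $O=\{x\in\R^p:x_1>1\}$, $\Psi\equiv1$: every hypothesis as literally stated holds (vacuously, since $\Phi\neq0$ on $O$), yet the conclusion fails. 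This is really a defect of the lemma's formulation rather than of your argument; the paper's proof silently assumes the same thing when it applies the factor theorem, and in the paper's application $O$ is a cylinder of the form $\R\times U$ in the $x_1$-direction, for which the assumption is automatic. So on this point you are on the same footing as the paper --- neither proof closes the gap, and you at least make it explicit.
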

\begin{proof}
Viewing $x_1\mapsto \Phi(x)$ as a univariate polynomial in $x_1$
with distinct roots $r_j(x_{P\backslash\{1\}})$ for fixed
$x_{P\backslash\{1\}}$, it follows by the factor theorem for
polynomials that
\[
\Psi(x)=C(x)\Phi(x),\,\mbox{ for }x\in O,
\]
for some function $C$ that is a polynomial in $x_1$. Since $\Psi$
is a multivariate polynomial that only depends on $x_1$ by the
term $C(x) x_1^d$, it follows that $C$ is a multivariate
polynomial. The equality in the above display can be extended to
$\R^p$ by applying a uniqueness theorem for holomorphic functions,
see \cite[p.\ \!226]{rudin}. This yields the result.
\end{proof}
\begin{lemma}\label{lem:thetais0}
Let $\theta:\R^p\rightarrow\R^{p\times p}:x\mapsto
A^0+\sum_{k=1}^p A^k x_k$ with $A^k\in\R^{p\times p}$ symmetric
and assume $x^\top\theta(x)=0$ for all $x$. Then $\theta(x)=0$ for
all $x$.
\end{lemma}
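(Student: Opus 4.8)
The plan is to expand the vector identity $x^\top\theta(x)=0$ into a system of polynomial identities in the entries of the matrices $A^k$, and then to exploit the interplay between the symmetry of each $A^k$ and the antisymmetry that the quadratic terms force. First I would write out the $j$-th component of the row vector $x^\top\theta(x)$, namely
\[
[x^\top\theta(x)]_j=\sum_{i=1}^p (A^0)_{ij}\,x_i+\sum_{i,k=1}^p (A^k)_{ij}\,x_i x_k,
\]
and observe that this is a polynomial of degree $2$ in $x$ which, by hypothesis, vanishes identically. Matching coefficients, the degree-one part gives $(A^0)_{ij}=0$ for all $i,j$, hence $A^0=0$. Collecting the degree-two terms, the vanishing of the coefficient of $x_i x_k$ forces, for all $i,k,j$,
\[
(A^k)_{ij}+(A^i)_{kj}=0,
\]
where the case $i=k$ recovers the vanishing of the coefficient of $x_i^2$. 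I will call this relation $(\ast)$; together with the symmetry $(A^k)_{ij}=(A^k)_{ji}$, which I call $(\ast\ast)$, it is all that is needed.

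The core of the argument is then a short index chase. Fixing $i,j,k$ and alternately applying $(\ast)$ (which swaps the superscript with the first subscript and flips the sign) and $(\ast\ast)$ (which swaps the two subscripts), I obtain
\[
(A^k)_{ij}=-(A^i)_{kj}=-(A^i)_{jk}=(A^j)_{ik}=(A^j)_{ki}=-(A^k)_{ji}=-(A^k)_{ij}.
\]
Hence $(A^k)_{ij}=0$ for all $i,j,k$, so every $A^k=0$. Combined with $A^0=0$ this gives $\theta(x)=A^0+\sum_{k=1}^p A^k x_k=0$ for all $x$, which is the assertion.

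I do not anticipate any real obstacle: the only delicate point is the bookkeeping in the index chase, where the symmetry and antisymmetry relations must be applied in the correct alternating order so that the chain closes back onto $-(A^k)_{ij}$ after six steps; everything else is elementary coefficient matching. Conceptually the lemma is just the statement that the $3$-tensor $T_{kij}:=(A^k)_{ij}$, being symmetric in its last two indices (by $(\ast\ast)$) and antisymmetric in its first two (by $(\ast)$), must vanish — the familiar fact that a tensor cannot be simultaneously symmetric and antisymmetric across overlapping index pairs.
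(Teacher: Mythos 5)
Your proof is correct and follows essentially the same route as the paper: match coefficients to get $A^0=0$ and the relation $(A^k)_{ij}+(A^i)_{kj}=0$, then use the symmetry of each $A^k$ in a cyclic index chase ending in $(A^k)_{ij}=-(A^k)_{ij}$. The only (cosmetic) difference is that you state the antisymmetry relation uniformly for all $i,k,j$, which lets you avoid the paper's separate treatment of the diagonal case $A^i_{ij}=0$.
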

\begin{proof}
It is clear that $A^0=0$. We show that $A^k=0$ for $k>0$. It holds
that
\begin{align*}
0=\big(x^\top \sum_{k=1}^{p} A^k
x_k\big)_j=\sum_{k=1}^{p}\sum_{i=1}^{p}  x_i x_k
A^k_{ij}&=\sum_{1\leq i<k\leq p} x_i x_k
(A^k_{ij}+A^i_{kj})\\&\qquad+\sum_{i=1}^{p} x_i^2 A^i_{ij},
\end{align*}
for all $j$. Hence for all $i,j$ we have $A^i_{ij}=0$ and
$A^k_{ij}=-A^i_{kj}$ for $k\neq i$. Since $A^k$ is symmetric, the
latter gives $A^k_{ij}=-A^i_{kj}=-A^i_{jk}$. So if we permute the
indices $i,j,k$ by the cycle $(i\mapsto j, j\mapsto k, k\mapsto
i)$, then $A^k_{ij}$ gets a minus sign. Permuting the indices
repeatedly we obtain
\[
A^k_{ij}=-A^i_{jk}=A^j_{ki}=-A^k_{ij},
\]
which implies $A^k_{ij}=0$ for all $i,j$ and $k\neq i$. Hence
$A^k=0$ for all $k$, as we have already shown that $A^i_{ij}=0$
for all $i,j$.
\end{proof}
\medskip

Thus in order to characterize all affine diffusions with quadratic
state space, there are two cases to consider, namely
$\Phi(x)=x_1-\sum_{i=2}^q x_i^2$ and $\Phi(x)=x_1^2-\sum_{i=2}^q
x_i^2$. In the next subsections we characterize for these two
forms of $\Phi$ all possible $\theta$ which can act as a diffusion
matrix of an affine SDE, i.e.\ which $\theta$ satisfy
(\ref{eq:gammaxthetaxis0}). Moreover, we are able to construct a
square root $\sigma$ of $\theta$ such that (\ref{eq:SDE}) is an
affine SDE with quadratic state space $\mathcal{X}$, generalizing
the 2-dimensional setting as treated in \cite[Section~12]{dfs03}
and \cite{gs06}. In particular we show existence and uniqueness of
a strong solution.

\subsection{Parabolic state space} Assume $\Phi$  is of the form
$ \Phi(x)=x_1-\sum_{i=2}^q x_i^2, $ with $1<q\leq p$. The state
space $\mathcal{X}$ then necessarily equals
$\mathcal{X}=\{\Phi\geq 0\}$. For $x\in\R^p$ we write
$x=(x_1,y,z)\in\R^{1}\times\R^{q-1}\times\R^{p-q}$ and we define
affine matrix-valued functions $\zeta$ and $\eta$ by
\[
\zeta(x)=\begin{pmatrix}
    4x_1 & 2y^\top \\
    2y & \Id
  \end{pmatrix},\quad \eta(x)=\begin{pmatrix}
    0 & 0 & \ldots & 0 \\
    T_{12}(y) & T_{13}(y) & \ldots & T_{q-2,q-1}(y)
  \end{pmatrix},
\]
with $T_{ij}:\R^{q-1}\rightarrow\R^{q-1}$ for $1\leq i<j<q$ given
by $T_{ij}(y)_i=y_j$, $T_{ij}(y)_j=-y_i$, $T_{ij}(y)_k=0$ for
$k\neq i,j$. Moreover, we write $Q=\{2,\ldots,q\}$. Now,
condition~(\ref{eq:gammaxthetaxis0}) reads
\begin{align}\label{eq:gammaxthetaxis1}
  \begin{pmatrix}
    1 & -2y^\top & 0
  \end{pmatrix}\theta(x)=0\mbox{ for all $x\in\R^p$ such that }x_1=y^\top y.
\end{align}
We use the following lemmas to characterize those $\theta$ that
satisfy this condition in Proposition~\ref{prop:quadrtheta}.
\begin{lemma}\label{lem:basis}
Consider the linear space
\begin{align}\label{eq:linspace}
\mathcal{L}=\left\{ a:\R^p\rightarrow\R^q\mbox{ affine }\,|\,
  \begin{pmatrix}
    1 & -2y^\top
  \end{pmatrix}a(x)=0\mbox{ for all $x$ with }x_1=y^\top y
\right\}.
\end{align}
Then a basis for $\mathcal{L}$ is formed by the columns of $\zeta$
and $\eta$.
\end{lemma}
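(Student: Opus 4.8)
The plan is to describe a general element of $\mathcal{L}$ by its coefficients, eliminate $x_1$ via the constraint, and reduce the defining condition to a polynomial identity in $y$ that can be read off degree by degree. Write $x=(x_1,y,z)$ and an affine $a:\R^p\to\R^q$ as $a(x)=v+x_1 p+Ny+Cz$, with $v,p\in\R^q$, $N\in\R^{q\times(q-1)}$ and $C\in\R^{q\times(p-q)}$. Put $g(y)=(1,-2y^\top)^\top\in\R^q$, so that membership in $\mathcal{L}$ means $g(y)^\top a(x)=0$ whenever $x_1=y^\top y$.

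First I would dispose of the $z$-dependence. For fixed $y$ (with $x_1=y^\top y$) the map $z\mapsto g(y)^\top a(x)$ is affine in $z$ and must vanish identically, so $g(y)^\top C=0$; writing $C$ in blocks $C=(C_1;C_2)$ (first row, last $q-1$ rows), the identity $C_1-2y^\top C_2=0$ for all $y\in\R^{q-1}$ forces $C=0$. Hence every $a\in\mathcal{L}$ is independent of $z$, which explains why $\zeta$ and $\eta$ involve only $(x_1,y)$.

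Next, substituting $x_1=y^\top y$ turns the remaining condition into $g(y)^\top\bigl(v+(y^\top y)p+Ny\bigr)=0$ for all $y\in\R^{q-1}$. Splitting $v=(v_1,\tilde v)$, $p=(p_1,\tilde p)$ and $N=(N_1;\tilde N)$ into first and remaining components and expanding, the left-hand side becomes a polynomial in $y$ whose homogeneous parts of degrees $0,1,2,3$ are $v_1$, $(N_1-2\tilde v^\top)y$, $p_1\,y^\top y-2\,y^\top\tilde N y$, and $-2(y^\top\tilde p)(y^\top y)$. Setting each to zero gives $v_1=0$; $N_1=2\tilde v^\top$; $\tilde p=0$ (the cubic form vanishes identically only if $\tilde p=0$); and, since a quadratic form determines the symmetric part of its matrix, $\tilde N+\tilde N^\top=p_1\Id$. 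I expect the \emph{degree-$2$ identity to be the crux}: it pins the symmetric part of $\tilde N$ to the scalar matrix $\tfrac{p_1}{2}\Id$ while leaving the antisymmetric part of $\tilde N$ entirely free. This free antisymmetric part is exactly what the columns of $\eta$ supply, whereas the scalar $p_1$ and the free $\tilde v$ (which then fixes $N_1$) are supplied by the columns of $\zeta$.

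Finally I would conclude by a dimension count and an independence check. The surviving free parameters are $\tilde v\in\R^{q-1}$, the scalar $p_1$, and the antisymmetric part of $\tilde N$, so $\dim\mathcal{L}=(q-1)+1+\binom{q-1}{2}=q+\binom{q-1}{2}$, which is the number of columns of $\zeta$ and $\eta$ combined. One checks directly that these columns lie in $\mathcal{L}$: the first column of $\zeta$ vanishes on $\{x_1=y^\top y\}$ since $g(y)^\top(4x_1,2y)^\top=4(x_1-y^\top y)$, the columns $2,\dots,q$ satisfy $g(y)^\top(2y_{j-1},e_{j-1})^\top=0$ identically, and $g(y)^\top(0,T_{ij}(y))^\top=-2y^\top T_{ij}(y)=0$. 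Under the parametrization above they map to $(\tilde v,p_1,\mathrm{antisym}(\tilde N))$ equal to $(0,4,0)$, $(e_{j-1},0,0)$ and $(0,0,E_{ij})$ respectively, where $E_{ij}$ is the elementary antisymmetric matrix; these are visibly linearly independent. Being $\dim\mathcal{L}$ linearly independent elements of $\mathcal{L}$, they form a basis.
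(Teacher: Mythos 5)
Your proof is correct, but it takes a genuinely different route from the paper. The paper also starts by checking that the columns of $\zeta$ and $\eta$ are linearly independent elements of $\mathcal{L}$, but it then computes $\dim\mathcal{L}$ \emph{indirectly}: it views $\mathcal{L}$ as the kernel of the linear operator $L:\mbox{Aff}(\R^p,\R^q)\rightarrow\mbox{Quadr}(\R^p,\R)/(x_1-y^\top y)$, $a\mapsto\begin{pmatrix}1 & -2y^\top\end{pmatrix}a$, identifies $\ker L$ with $\mathcal{L}$ via Lemma~\ref{lem:psiiscphi}, counts a monomial basis of $\mbox{im}\,L$ (which requires the $\binom{q-1}{2}$ correction for the symmetric products $x_ix_j$, $2\leq i<j\leq q$), and invokes rank--nullity. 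You instead parametrize a general $a\in\mathcal{L}$ by its coefficients $(v,p,N,C)$, eliminate $x_1$ by substitution, and solve the resulting polynomial identity in $y$ degree by degree, obtaining $C=0$, $v_1=0$, $N_1=2\tilde v^\top$, $\tilde p=0$ and $\tilde N+\tilde N^\top=p_1\Id$. This is more elementary and self-contained: it avoids the quotient-space construction and any appeal to Lemma~\ref{lem:psiiscphi} (direct substitution replaces the divisibility argument), and it yields an explicit parametrization of $\mathcal{L}$ by $(\tilde v,p_1,\mathrm{antisym}(\tilde N))$ --- which in fact already exhibits every element of $\mathcal{L}$ as a combination of the columns of $\zeta$ and $\eta$, so your closing dimension count is essentially a consistency check rather than the crux. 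What the paper's abstract argument buys in return is reusability: the same rank--nullity template is invoked verbatim for the conical case (Lemma~\ref{lem:basis2}), where your coefficient bookkeeping would have to be redone with the constraint $x_1^2=y^\top y$.
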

\begin{proof}
Clearly these columns are linearly independent elements of
$\mathcal{L}$. To prove that they span $\mathcal{L}$ we use a
dimension argument. Let $\mbox{Aff}(\R^p,\R^q)$ denote the space
of affine functions from $\R^p$ to $\R^q$ and let
$\mbox{Quadr}(\R^p,\R)/(x_1-y^\top y)$ be the space of quadratic
functions from $\R^p$ to $\R$, modulo $x_1-y^\top y$ (that is, $p$
and $q$ are equivalent if $p(x)-q(x)= c(x_1-y^\top y)$ for some
constant $c$). Consider the linear operator
\[
L:\mbox{Aff}(\R^p,\R^q)\rightarrow
\mbox{Quadr}(\R^p,\R)/(x_1-y^\top y): a(x)\mapsto \begin{pmatrix}
    1 & -2y^\top
  \end{pmatrix} a(x),
\]
and note that $\mathcal{L}=\ker L$, in view of
Lemma~\ref{lem:psiiscphi}. By the dimension theorem for linear
operators, we have
\[
\dim\mbox{Aff}(\R^p,\R^q)= \dim\ker L+\dim\mbox{im}\, L.
\]
It holds that $\dim\mbox{Aff}(\R^p,\R^q)=pq+q$. Since $x_1\equiv
y^\top y$, a basis for $\mbox{im}\, L$ is given by
\[
\{1,x_2,\ldots,x_p\}\cup\{x_ix_j:2\leq i \leq q, 1\leq j\leq p\},
\]
whence
\[
\dim\mbox{im}\, L=p+(q-1)p-
  \begin{pmatrix}
    q-1 \\
    2
  \end{pmatrix}=pq-
  \begin{pmatrix}
    q-1 \\
    2
  \end{pmatrix}.
\]
It follows that $\dim\ker L=q+\half(q-1)(q-2)$, which is the
number of columns in $\zeta$ and $\eta$. Thus the columns span the
kernel of $L$.
\end{proof}
\begin{lemma}\label{lem:czeta}
Let $\mathcal{L}$ be defined by (\ref{eq:linspace}) and suppose
$M:\R^p\rightarrow\R^{q\times q}$ is affine and $M(x)$ is
symmetric for all $x\in\R^p$. If the columns of $M$ are in
$\mathcal{L}$, then $M=c\zeta$ for some $c\in\R$.
\end{lemma}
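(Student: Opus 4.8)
The plan is to expand $M$ in the basis of $\mathcal{L}$ provided by Lemma~\ref{lem:basis} and then use symmetry to kill every degree of freedom. Since each column of $M$ lies in $\mathcal{L}$, there are unique constant matrices $C\in\R^{q\times q}$ and $D\in\R^{\binom{q-1}{2}\times q}$ with $M=\zeta C+\eta D$, the columns of $\zeta$ and $\eta$ being ordered as in their definition. Three structural facts will drive everything: $\zeta$ is symmetric, the first row of $\eta$ vanishes identically, and $\zeta,\eta$ (hence $M$) are affine and independent of the $z$-coordinates. Writing $M=M^\top$ as the functional identity $\zeta C+\eta D=C^\top\zeta+D^\top\eta^\top$ and using that both sides are affine, I would compare the coefficients of $1$, of $x_1$, and of each $y_k$ in turn.

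First I would pin down $C$ from the constant and $x_1$ coefficients. At $x=0$ one has $\eta(0)=0$ and $\zeta(0)=\diag(0,\Id)$, and the identity $\zeta(0)C=C^\top\zeta(0)$ says exactly that the lower $(q-1)\times(q-1)$ block of $C$ is symmetric and that the first column of $C$ below the diagonal vanishes. Differentiating in $x_1$ brings in only $\partial_{x_1}\zeta=4e_1e_1^\top$ and forces the first row of $C$ off the $(1,1)$-entry to vanish as well. Hence $C=\bigl(\begin{smallmatrix}a&0\\0&C'\end{smallmatrix}\bigr)$ with $C'$ symmetric. Because $\zeta$ itself is symmetric with columns in $\mathcal{L}$, I may subtract $a\zeta$ from $M$ without leaving the class under consideration; this makes the $(1,1)$-entry of $C$ zero, and it then suffices to prove $M=0$, which returns $M=a\zeta$ for the original $M$, i.e.\ $c=a$.

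With $a=0$ and $C''=C'$, the block form of $M$ relative to $\R^q=\R\times\R^{q-1}$ becomes $M=\bigl(\begin{smallmatrix}0 & 2y^\top C''\\ p(y) & C''+R(y)\end{smallmatrix}\bigr)$, where $\begin{pmatrix}p(y)&R(y)\end{pmatrix}=\widetilde\eta(y)D$ is the (linear in $y$) lower block of $\eta D$ and $\widetilde\eta(y)$ has columns $T_{ij}(y)$, each carrying $y_j$ in slot $i$ and $-y_i$ in slot $j$. Equating the top-right and bottom-left blocks gives $p(y)=2C''y$; matching the coefficient of each $y_k$ in this linear identity forces the diagonal of $C''$ to vanish, and combined with the symmetry of $C''$ this forces $C''=0$ together with the vanishing of the first column of $D$.

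Finally, with $C''=0$ we are left with $M=\eta D$, so proving $M=0$ reduces to showing that the only symmetric combination $\widetilde\eta(y)D$ of the ``rotational'' columns $T_{ij}(y)$ is the trivial one, $D=0$. I would obtain this by equating, for each pair $m\neq n$, the coefficient of every $y_k$ in the symmetry relation $R(y)_{mn}=R(y)_{nm}$; because each $T_{ij}$ contributes the antisymmetric pattern described above, these scalar identities chain together to annihilate every entry $D_{(i,j),l}$, as one already sees in the smallest case $q=3$. This last coefficient bookkeeping --- that no nonzero combination of the antisymmetric columns of $\eta$ is symmetric --- is the main obstacle; all the earlier steps are routine extraction of linear constraints. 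Once $D=0$ and $C''=0$, we get $M=0$, hence the original $M$ equals $a\zeta$, establishing the claim with $c=a$.
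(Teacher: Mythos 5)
Your skeleton is the same as the paper's: expand $M=\zeta C+\eta D$ via Lemma~\ref{lem:basis} and let symmetry eliminate the coefficients. The first two stages are correct and match the paper's proof in content (the paper reads off the same constraints by comparing the blocks of $M(x)$ directly, rather than evaluating at $x=0$ and differentiating in $x_1$). One phrasing slip: ``the diagonal of $C''$ vanishes, and combined with symmetry this forces $C''=0$'' is not literally a valid inference, since a symmetric matrix with zero diagonal need not vanish. What actually closes that step is that matching \emph{all} $y_k$-coefficients in $p(y)=2C''y$ exhibits $2C''$ as an antisymmetric matrix (for $k<l$ its $(k,l)$ and $(l,k)$ entries are $D_{(k,l),1}$ and $-D_{(k,l),1}$), so symmetry of $C''$ kills everything, including the first column of $D$. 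Since this full matching is what you describe anyway, the slip is cosmetic; the paper reaches the same conclusion from the identity $y^\top T(y)=0$, which gives $y^\top C''y\equiv0$.

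The genuine gap is the final step, which you yourself call ``the main obstacle'': proving that symmetry of $R(y)=\widetilde{\eta}(y)\widehat{D}$ for all $y$ forces $\widehat{D}=0$, where $\widehat{D}$ denotes the last $q-1$ columns of $D$. Asserting that the scalar identities ``chain together'', with a check of $q=3$ only, is not a proof, and this is precisely the crux of the lemma. It can be closed in either of two ways. (i) Directly: let $F_{mkn}$ be the coefficient of $y_k$ in $R(y)_{mn}$. The structure of the columns $T_{ij}$ (entry $y_j$ in slot $i$, entry $-y_i$ in slot $j$, nothing in slot $k$ for $k\neq i,j$) makes $F$ antisymmetric under swapping its first two indices, while $R(y)=R(y)^\top$ makes $F$ symmetric under swapping its first and third indices; alternating the two swaps gives
\begin{equation*}
F_{mkn}=-F_{kmn}=-F_{nmk}=F_{mnk}=F_{knm}=-F_{nkm}=-F_{mkn},
\end{equation*}
so $F\equiv0$, hence $R\equiv0$, and then $\widehat{D}=0$ because the functions $y\mapsto T_{ij}(y)$ are linearly independent. (ii) As the paper does: note $y^\top T_{ij}(y)=y_iy_j-y_jy_i=0$, hence $y^\top R(y)\equiv0$; since $R$ is linear in $y$ with symmetric coefficient matrices, Lemma~\ref{lem:thetais0} (whose proof is exactly the index-cycling in (i)) yields $R\equiv0$, and again $\widehat{D}=0$. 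Without one of these arguments your proposal stops just short of the statement it needs.
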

\begin{proof}
By Lemma~\ref{lem:basis} there exist matrices $A$ and $B$ such
that
\[
M(x)=\zeta(x) A + \eta(x) B.
\]
Write  $T(y)=(T_{ij}(y))_{1\leq i<j<q}$ and $B=
  \begin{pmatrix}
    B^1 & \wB
  \end{pmatrix}$. Then the above display reads
\[
M(x)=
  \begin{pmatrix}
    4x_1 A_{11} +2 y^\top A_{Q1} & 4 x_1 A_{1Q} + 2y^\top A_{QQ} \\
    2y A_{11} + A_{Q1}+T(y) B^1 & 2y A_{1Q} + A_{QQ}+T(y) \wB
  \end{pmatrix}
%  \begin{pmatrix}
%    0 & 0 \\
%    T(y) B^1 & T(y) \wB
%  \end{pmatrix}.
\]
Since $M(x)$ is symmetric it immediately follows that $A_{1Q}=0$
and $A_{Q1}=0$. Define $N(x)=M(x)-A_{11}\zeta(x)$. Then $N$ is
also symmetric. We have
\[
N(x)=\begin{pmatrix}
    0  &  2y^\top C \\
    T(y) B^1  &  C+T(y)\wB
  \end{pmatrix},
%  +\begin{pmatrix}
%    0 & 0 \\
%    T(y) B^1 & T(y) \wB
%  \end{pmatrix},
\]
with $C=A_{QQ}-A_{11}\Id$. This yields $C=C^\top$ and $T(y)B^1=2C
y$. Since $y^\top T(y)=0$, the latter implies $y^\top C y =0$,
whence $C=0$, as $C$ is diagonalizable by an orthogonal matrix.
Thus $A_{QQ}=A_{11}\Id$ and it remains to show that $\wB=0$.

It holds that $T(y)\wB$ is symmetric and $y^\top T(y)\wB=0$.
Lemma~\ref{lem:thetais0} yields $T(y)\wB=0$,
% , whence we can write
%it as
%\[
%T(y)\wB=\sum_{k=1}^{q-1} C^k y_k,
%\]
%for some symmetric $((q-1)\times(q-1))$-matrices $C^k$. Now
%$y^\top T(y)\wB=0$ yields
%\[
%0=\big(y^\top\sum_{k=1}^{q-1} C^k
%y_k\big)_j=\sum_{k=1}^{q-1}\sum_{i=1}^{q-1}  y_i y_k
%C^k_{ij}=\sum_{1\leq i<k<q} y_i y_k
%(C^k_{ij}+C^i_{kj})+\sum_{i=1}^{q-1} y_i^2 C^i_{ij},
%\]
%for all $j$. Hence for all $i,j$ we have $C^i_{ij}=0$ and
%$C^k_{ij}=-C^i_{kj}$ for $k\neq i$. Since $C^k$ is symmetric, the
%latter gives $C^k_{ij}=-C^i_{kj}=-C^i_{jk}$. So if we permute the
%indices $i,j,k$ by the cycle $(i\mapsto j, j\mapsto k, k\mapsto
%i)$, then $C^k_{ij}$ gets a minus sign. Permuting the indices
%repeatedly we obtain
%\[
%C^k_{ij}=-C^i_{jk}=C^j_{ki}=-C^k_{ij},
%\]
%which implies $C^k_{ij}=0$ for all $i,j$ and $k\neq i$. Hence
%$C^k=0$, as we have already shown that $C^i_{ij}=0$ for all $i,j$.
%Thus $T(y)\wB=0$ and
whence $\wB=0$ by linear independence of the columns of $T(y)$, as
we needed to prove.
\end{proof}

\begin{prop}\label{prop:quadrtheta}
%Assume $p\geq q>1$ and suppose $\mathcal{X}=\{\theta\geq0\}$ is of the form
%(\ref{eq:xquadratic}) and (\ref{eq:xhalfspaces}) with $\Phi$ given
%by
%\[
%\Phi(x)=x_1-\sum_{i=2}^q x_i^2.
%\]
If (\ref{eq:gammaxthetaxis1}) holds, then necessarily $\theta$ is
of the form
\begin{align}\label{eq:quadrtheta2}
\theta(x)=
 \begin{pmatrix}
    c\zeta(x) & A(x) \\
    A(x)^\top & B(x)
  \end{pmatrix},
\end{align}
for some $c\geq0$, with $A(x)=\zeta(x) A_1+\eta(x) A_2$ for some
matrices $A_1,A_2$ and $B:\R^p\rightarrow\R^{(p-q)\times (p-q)}$
affine and symmetric. Moreover, if $c=1$ and $A_1=0$, then it
holds that
\begin{align}\label{eq:Bgeqbla}
B(x)- A_2^\top\eta(x)^\top\eta(x)A_2\geq0,
\end{align}
for all $x\in\mathcal{X}=\{\theta\geq0\}$.
\end{prop}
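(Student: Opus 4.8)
The plan is to extract the block structure of $\theta$ forced by (\ref{eq:gammaxthetaxis1}) using Lemmas~\ref{lem:basis} and~\ref{lem:czeta}, and then to derive (\ref{eq:Bgeqbla}) from a Schur complement argument resting on a single algebraic identity relating $\zeta$ and $\eta$.

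First I would partition $\theta(x)$ conformably with $\R^p=\R^q\times\R^{p-q}$ (the first $q$ coordinates $(x_1,y)$ versus the $z$-coordinates), writing $\theta=\begin{pmatrix}\Theta_{11}&\Theta_{12}\\\Theta_{12}^\top&B\end{pmatrix}$ with $\Theta_{11}$ of size $q\times q$, $\Theta_{12}$ of size $q\times(p-q)$, and $B$ symmetric and affine. Since the $z$-block of the gradient $\begin{pmatrix}1&-2y^\top&0\end{pmatrix}$ vanishes, condition (\ref{eq:gammaxthetaxis1}) decouples into $\begin{pmatrix}1&-2y^\top\end{pmatrix}\Theta_{11}(x)=0$ and $\begin{pmatrix}1&-2y^\top\end{pmatrix}\Theta_{12}(x)=0$, both on the parabola $x_1=y^\top y$. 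The first equation says each column of the symmetric affine matrix $\Theta_{11}$ lies in $\mathcal{L}$, so Lemma~\ref{lem:czeta} gives $\Theta_{11}=c\zeta$ for some scalar $c$; the second says each column of $\Theta_{12}$ lies in $\mathcal{L}$, so the basis of Lemma~\ref{lem:basis} yields $\Theta_{12}=\zeta A_1+\eta A_2=A$ for suitable $A_1,A_2$. This establishes the form (\ref{eq:quadrtheta2}). To obtain $c\ge0$ I would evaluate at an interior point $x_0\in\mathcal{X}^\circ$: there $\Phi(x_0)>0$, and since the Schur complement of the $\Id$-block of $\zeta(x_0)$ equals $4\Phi(x_0)>0$ we have $\zeta(x_0)>0$; as $c\zeta(x_0)$ is a principal submatrix of $\theta(x_0)\ge0$, this forces $c\ge0$.

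For the second part I set $c=1$ and $A_1=0$, so that $\theta=\begin{pmatrix}\zeta&\eta A_2\\A_2^\top\eta^\top&B\end{pmatrix}$, and apply the Schur complement criterion at interior points, where $\zeta>0$: there $\theta\ge0$ is equivalent to $B-A_2^\top\eta^\top\zeta^{-1}\eta A_2\ge0$. The crux, and what I expect to be the real content of the proof, is the identity $\zeta(x)\eta(x)=\eta(x)$, equivalently $\zeta(x)^{-1}\eta(x)=\eta(x)$. Writing $\eta=\begin{pmatrix}0\\T(y)\end{pmatrix}$ with $T(y)=(T_{ij}(y))$, a direct multiplication gives $\zeta\eta=\begin{pmatrix}2y^\top T(y)\\T(y)\end{pmatrix}$, and the top block vanishes because $y^\top T(y)=0$, the relation already exploited in the proof of Lemma~\ref{lem:czeta}. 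Hence $\eta^\top\zeta^{-1}\eta=\eta^\top\eta$, and the Schur complement condition reads exactly $B-A_2^\top\eta^\top\eta A_2\ge0$ on $\mathcal{X}^\circ$. Finally, since $B$ is affine and $\eta^\top\eta$ is polynomial in $y$, the left-hand side is continuous, so I would extend the inequality from $\mathcal{X}^\circ$ to its closure $\mathcal{X}=\{\theta\ge0\}$. The only genuinely delicate point is the block inversion of $\zeta$ together with the verification that $\zeta^{-1}\eta=\eta$; everything else is a bookkeeping application of the two preceding lemmas.
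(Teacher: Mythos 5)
Your proof is correct. The first part coincides with the paper's, which simply invokes Lemmas~\ref{lem:basis} and~\ref{lem:czeta}; your explicit decoupling of the $z$-block of the gradient and your argument for $c\geq0$ (positive definiteness of $\zeta$ at an interior point, $c\zeta$ being a principal submatrix of $\theta\geq0$) fill in details the paper leaves implicit. For the second part you take a mildly different route. The paper fixes $w$, uses $\theta(x)\geq0$ to get $v^\top\zeta(x)v+2v^\top\eta(x)A_2w+w^\top B(x)w\geq0$ for all $v$, and evaluates at the particular vector $v=-\eta(x)A_2w$; the identity $\zeta(x)\eta(x)=\eta(x)$ --- exactly the identity you isolate as the crux --- then collapses this to (\ref{eq:Bgeqbla}) pointwise at \emph{every} $x\in\mathcal{X}$, boundary included. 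Your Schur-complement argument is the completion-of-squares form of the same computation: your test vector $-\zeta(x)^{-1}\eta(x)A_2w$ equals the paper's $-\eta(x)A_2w$ precisely because $\zeta^{-1}\eta=\eta$. The difference is that your version needs $\zeta(x)>0$, hence only applies on $\mathcal{X}^\circ$, and so forces the extra closure step $\overline{\mathcal{X}^\circ}=\mathcal{X}$. That step is sound here (for $\Phi(x)=x_1-y^\top y$, perturbing $x_1$ upward shows $\{\Phi\geq0\}$ is the closure of $\{\Phi>0\}$, and the left-hand side of (\ref{eq:Bgeqbla}) is continuous), so nothing is lost; the paper's choice of test vector just buys the boundary points for free by never inverting $\zeta$.
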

\begin{proof}
The first part follows from Lemma~\ref{lem:basis} and
Lemma~\ref{lem:czeta}. It remains to show (\ref{eq:Bgeqbla}).

Suppose $c=1$ and $A_1=0$. By positive semi-definiteness of
$\theta$, we have
\begin{align*}
  0&\leq\begin{pmatrix}
    v^\top & w^\top
  \end{pmatrix}\theta(x)
  \begin{pmatrix}
    v \\
    w
  \end{pmatrix}\\&=v^\top\zeta(x) v +2v^\top\eta(x)
  A_2 w+w^\top
  B(x) w,
\end{align*}
for all $v\in\R^q$, $w\in\R^{p-q}$, $x\in\mathcal{X}$.  Fix
$w\in\R^{p-q}$, $x\in\mathcal{X}$ arbitrarily and take
$v=-\eta(x)A_2w$. Noting that $\zeta(x)\eta(x)=\eta(x)$ for all
$x\in\R^p$, the above display then reads
\[
w^\top
  B(x) w -
  w^\top A_2^\top\eta(x)^\top\eta(x)A_2 w \geq0,
\]
which proves (\ref{eq:Bgeqbla}).
\end{proof}

\medskip

To show the existence of an affine diffusion with parabolic state
space in Theorem~\ref{th:quadr}, we need the following result. Its
proof is based on a modification of a result by Yamada and
Watanabe \cite[Theorem~1]{YamadaI}.
%whose proof is not entirely trivial.
\begin{prop}\label{prop:strongquadr}
There exists a unique strong solution to the SDE
\[
\dd
  \begin{pmatrix}
    X_{1,t} \\
    Y_{t}
  \end{pmatrix}=
  \begin{pmatrix}
    a_{11} X_{1,t} + a_{1Q} Y_t + b_1 \\
    a_{QQ} Y_t + b_Q
  \end{pmatrix}\dd t +
  \begin{pmatrix}
    2\sqrt{|X_{1,t}-Y_t^\top Y_t|} & 2Y_t^\top \\
    0 & \Id
  \end{pmatrix}\dd W_t.
\]
\end{prop}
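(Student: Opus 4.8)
The plan is to disentangle the two noise sources and reduce the degeneracy to a single scalar equation of Yamada--Watanabe type. The component $Y$ solves the autonomous linear SDE
\[
\dd Y_t=(a_{QQ}Y_t+b_Q)\,\dd t+\begin{pmatrix}0 & \Id\end{pmatrix}\dd W_t,
\]
whose coefficients are globally Lipschitz, so $Y$ has a unique strong solution, adapted to $W$. It then remains to treat $X_1$ with $Y$ regarded as given. I would proceed in the two standard steps---weak existence for the full system and pathwise uniqueness---and conclude with the Yamada--Watanabe theorem in the form \cite[Theorem~21.14]{Kallenberg}.

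Weak existence is immediate: the drift is affine and the diffusion coefficient is continuous with at most linear growth (the entry $2\sqrt{|X_1-Y^\top Y|}$ grows like a square root, $2Y^\top$ linearly, and $\Id$ is bounded), so the linear growth condition~(\ref{eq:lingrow}) holds and a weak solution exists by \cite[Theorems~IV.2.3 and IV.2.4]{Ikwat}.

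The crux is pathwise uniqueness, and here I would pass to the variable $Z_t:=X_{1,t}-Y_t^\top Y_t$. Applying It\^o's formula to $Y_t^\top Y_t$, whose quadratic-variation term equals $(q-1)\dd t$, the martingale part $2Y_t^\top\dd W_{Q,t}$ of $Y_t^\top Y_t$ cancels exactly against the $2Y_t^\top\dd W_{Q,t}$ in $\dd X_{1,t}$, leaving the scalar equation
\[
\dd Z_t=\big(a_{11}Z_t+h(Y_t)\big)\,\dd t+2\sqrt{|Z_t|}\,\dd W_{1,t},
\]
with $h(Y_t)=a_{11}Y_t^\top Y_t+a_{1Q}Y_t+b_1-2Y_t^\top(a_{QQ}Y_t+b_Q)-(q-1)$. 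Thus $Z$ is driven by the \emph{single} Brownian motion $W_1$, with diffusion coefficient $2\sqrt{|z|}$ and a drift that is globally Lipschitz in $z$ (constant $|a_{11}|$), the $Y$-dependence entering only additively through $h$. For two solutions $(X_1,Y)$ and $(\wX_1,\wY)$ driven by the same $W$ with the same initial value, uniqueness for the $Y$-equation gives $Y=\wY$, hence $h(Y_t)=h(\wY_t)$ and $Z_t-\wZ_t=X_{1,t}-\wX_{1,t}$. Since $|2\sqrt{|z|}-2\sqrt{|z'|}|\le2\sqrt{|z-z'|}$ with $\int_{0+}(2\sqrt{u})^{-2}\dd u=\infty$, the Yamada--Watanabe comparison of \cite[Theorem~1]{YamadaI}, applied to $\phi_n(Z_t-\wZ_t)$ for the usual approximations $\phi_n$ of $|\cdot|$, yields $\E|Z_t-\wZ_t|=0$, so $Z=\wZ$ and therefore $X_1=\wX_1$.

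The sole obstacle is the non-Lipschitz coefficient $2\sqrt{|X_1-Y^\top Y|}$, which is exactly what the substitution is designed to isolate: after passing to $Z$ it becomes the canonical $2\sqrt{|z|}$ driven by one Brownian motion, placing us squarely in the scalar Yamada--Watanabe framework. The ``modification'' referred to before the statement is merely the exogenous, time-inhomogeneous drift $h(Y_t)$; as it enters additively and the $z$-dependence stays globally Lipschitz, it leaves the uniqueness estimate untouched.
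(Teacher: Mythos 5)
Your proposal is correct and follows essentially the same route as the paper: solve the autonomous $Y$-equation (Lipschitz, hence unique strong solution), substitute $Z=X_1-Y^\top Y$ so that the $2Y^\top\dd W_{Q}$ noise cancels and the difference of two solutions satisfies a scalar SDE with Lipschitz drift and H\"older-$\tfrac12$ diffusion, then conclude pathwise uniqueness by the Yamada--Watanabe approximation argument (the paper cites \cite[Proposition~5.2.13]{Karshr} for this step) and combine with weak existence. Your explicit computation of the drift $h(Y_t)$ is a detail the paper leaves implicit, but the decomposition and key lemma are identical.
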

\begin{proof}
By continuity of the coefficients and satisfaction of the linear
growth condition~(\ref{eq:lingrow}), there exists a weak solution
$((X_1,Y),W)$ on some filtered probability space $\filpspace$
carrying a Brownian motion $W$. For strong existence and
uniqueness it suffices to prove pathwise uniqueness. Therefore,
assume $((\wX_1,\wY),W)$ is another solution on the same
probability space. We see that the equation for $Y$ does not
contain $X_1$ and in fact it is an SDE which has a unique strong
solution, whence $Y_t=\wY_t$ a.s.\ for all $t\geq0$. Write
$Z=X_1-Y^\top Y$ and $\wZ=\wX_1-\wY^\top \wY$. Then it follows
that
\[
\dd (Z_t-\wZ_t)=a_{11} (Z_{1,t}-\wZ_{1,t})\dd t
+2\left(\sqrt{|Z_t|}-\sqrt{|\wZ_t|}\right)\dd W_{1,t}.
\]
Arguing as in the proof of \cite[Proposition~5.2.13]{Karshr} (for
instance), we deduce that $Z_t=\wZ_t$ a.s.\ for all $t\geq0$. Thus
$X_{1,t}=Z_t+Y_t^\top Y_t=\wZ_t+\wY_t^\top \wY_t=\wX_{1,t}$ a.s.\
for all $t\geq0$.
\end{proof}

\begin{theorem}\label{th:quadr}
Let $\Phi(x)=x_1-y^\top y$ and suppose $\{\theta\geq
0\}=\{\Phi\geq 0\}$. Then there exists an affine SDE with drift
$\mu$, diffusion matrix $\theta$ and state space
$\mathcal{X}=\{\theta\geq0\}$ if and only if
\begin{align}
  \nabla\Phi(x) \theta(x)&=0\label{con:invarPhi2},\\
\nabla\Phi(x) \mu(x)&\geq \tr(\theta_{QQ}(x))\label{con:invarPhi1}
\end{align}
for all $x\in\partial\mathcal{X}=\{\Phi=0\}$.
\end{theorem}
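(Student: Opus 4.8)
The plan is to establish the two implications from the invariance criterion of Proposition~\ref{prop:invargeneral}, combined with the structural description of $\theta$ in Proposition~\ref{prop:quadrtheta} and the uniqueness result Proposition~\ref{prop:strongquadr}. First note that $\Phi(x)=x_1-f(x_{P\backslash\{1\}})$ with $f(x_{P\backslash\{1\}})=\sum_{i=2}^q x_i^2$ a $C^2$-function, so Proposition~\ref{prop:invargeneral} applies directly. Its conditions (\ref{con:invarPsi2})--(\ref{con:invarPsi1}) translate into exactly (\ref{con:invarPhi2})--(\ref{con:invarPhi1}): the gradient condition $\nabla\Phi(x)\sigma(x)=0$ is equivalent to $\nabla\Phi(x)\theta(x)=0$ since $\theta=\sigma\sigma^\top$ is positive semi-definite, and since the Hessian $\nabla^2\Phi$ equals $-2$ on the diagonal entries indexed by $Q$ and is zero elsewhere, one computes $-\half\tr(\nabla^2\Phi(x)\theta(x))=\tr(\theta_{QQ}(x))$. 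Hence if an affine SDE with these data exists, $\mathcal{X}$ is invariant and the conditions follow; this is the ``only if'' part. Conversely, assuming (\ref{con:invarPhi2})--(\ref{con:invarPhi1}), $\mathcal{X}$ is invariant for \emph{any} continuous square root $\sigma$ of $\theta$, as the conditions involve only $\theta$ and $\mu$. Thus the ``if'' direction reduces to exhibiting a square root for which (\ref{eq:SDE}) admits a unique strong solution.

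For the construction, condition (\ref{con:invarPhi2}) is (\ref{eq:gammaxthetaxis1}), so Proposition~\ref{prop:quadrtheta} gives the block form of $\theta$ with top-left block $c\zeta$. Since $\{\theta\geq0\}=\{\Phi\geq0\}$ has a genuine parabolic boundary, necessarily $c>0$: were $c=0$, the vanishing $(1,1)$-entry would force the whole first row and column of $\theta$ to vanish, making $\{\theta\geq0\}$ independent of $x_1$. By an affine transformation preserving the parabola---the scaling $(x_1,y)\mapsto(s^2x_1,sy)$ to normalize $c=1$, followed by a block-lower-triangular shift of the $z$-coordinates to remove the $\zeta A_1$ term---I would reduce to $c=1$, $A_1=0$, so that $A=\eta A_2$ and (\ref{eq:Bgeqbla}) holds. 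Writing $\rho$ for the square root of $\zeta$ on $\mathcal{X}$ (the diffusion coefficient block appearing in Proposition~\ref{prop:strongquadr}) and using the identities $\zeta\eta=\eta$ and $\rho\eta=\eta$, I would take
\[
\sigma(x)=\begin{pmatrix} \rho(x) & 0 \\ A_2^\top\eta(x)^\top & |B(x)-A_2^\top\eta(x)^\top\eta(x)A_2|^{1/2} \end{pmatrix}
\]
and verify $\sigma\sigma^\top=\theta$ on $\mathcal{X}$ block by block: the top-left yields $\rho\rho^\top=\zeta$, the off-diagonal yields $\rho\eta A_2=\eta A_2=A$, and the bottom-right yields $A_2^\top\eta^\top\eta A_2+(B-A_2^\top\eta^\top\eta A_2)=B$, the last square root being genuine by (\ref{eq:Bgeqbla}). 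This $\sigma$ is continuous, has linear growth, and depends only on $(x_1,y)$.

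Weak existence then holds by continuity and the linear growth condition (\ref{eq:lingrow}). For pathwise uniqueness, the idea is that the only non-Lipschitz behaviour of $\sigma$ is channelled through $\Phi(X)=X_1-Y^\top Y$: an It\^o computation shows $Z:=\Phi(X)$ satisfies the scalar SDE $\dd Z=(\nabla\Phi(X)\mu(X)-\tr(\theta_{QQ}(X)))\dd t+2\sqrt{|Z|}\,\dd W_1$, whose diffusion coefficient depends on $Z$ alone. Thus the parabolic block $(X_1,Y)$ falls precisely under the Yamada--Watanabe modification of Proposition~\ref{prop:strongquadr}. Once $(X_1,Y)$ is pinned down pathwise, $\sigma$ becomes a determined adapted process, and the remaining coordinates $X_z$ satisfy an SDE whose only dependence on $X_z$ is through the affine, hence Lipschitz, drift; their difference then solves a linear ODE with zero initial value and vanishes, exactly as in the concluding step of Theorem~\ref{th:constructsigma}.

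The hard part will be the last reduction, namely confining \emph{all} roughness to the $\Phi$-direction. The identity $\rho\eta=\eta$ (together with $\eta$ having zero first row) is what removes $\sqrt{|x_1-y^\top y|}$ from the off-diagonal block $A_2^\top\eta^\top$, leaving it linear in $y$; and $B$ is forced to be independent of the $z$-coordinates by $B\geq0$ on the $z$-unbounded region $\mathcal{X}$. The delicate point is the bottom-right square root $|B-A_2^\top\eta^\top\eta A_2|^{1/2}$, whose Hölder-$\tfrac12$ roughness would otherwise obstruct a Gronwall estimate for $X_z$. I expect to resolve this by splitting $B-A_2^\top\eta^\top\eta A_2=B^1\Phi(x)+N(y)$ with $B^1\geq0$ constant and $N(y)\geq0$, and choosing the square root so as to expose this splitting, so that its sole non-Lipschitz dependence is again through $\Phi(X)$ and the single Yamada--Watanabe estimate on $Z$ closes the argument. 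Checking that such a square root exists and carries no roughness transverse to $\Phi$ is where the proof demands the most care.
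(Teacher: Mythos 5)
Your route is the paper's own: Proposition~\ref{prop:invargeneral} for necessity and for invariance, Proposition~\ref{prop:quadrtheta} for the block structure, the normalization $c=1$, $A_1=0$, the identical lower block-triangular square root, and Proposition~\ref{prop:strongquadr} plus a linear ODE for the $z$-coordinates. But there is a genuine error where you rule out $c=0$. You argue that $c=0$ forces the first row and column of $\theta$ to vanish and hence makes $\{\theta\geq0\}$ ``independent of $x_1$''. The first half is right (positive semi-definiteness on $\mathcal{X}^\circ$ plus affineness gives $A\equiv0$), but the conclusion is false: the lower-right block $B$ is an affine function of \emph{all} of $x$, including $x_1$ and $y$, so $\{\theta\geq0\}=\{B\geq0\}$ can still be the parabola. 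Concretely, take $p=4$, $q=2$, $\Phi(x)=x_1-x_2^2$ and
\[
\theta(x)=\begin{pmatrix}0&0&0&0\\0&0&0&0\\0&0&x_1&x_2\\0&0&x_2&1\end{pmatrix}.
\]
Then $\nabla\Phi(x)\theta(x)=0$ everywhere and $\{\theta\geq0\}=\{x_1\geq x_2^2\}=\{\Phi\geq0\}$, so this is an admissible instance of the theorem (with, say, $\mu\equiv(1,0,0,0)$, which satisfies (\ref{con:invarPhi1}) since $\theta_{QQ}\equiv0$) that your proof simply does not cover. The paper keeps $c=0$ as a separate case and constructs $\sigma=\bigl(\begin{smallmatrix}0&0\\0&|B|^{1/2}\end{smallmatrix}\bigr)$ for it.

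A second, smaller gap: you assert that $(X_1,Y)$ ``falls precisely under'' Proposition~\ref{prop:strongquadr}, but that proposition requires the drift of the parabolic block to have the form (\ref{eq:mu1cupQ}), i.e.\ $\mu_1$ free of $z$ and $\mu_Q$ free of $x_1$ and $z$; otherwise the block is not autonomous (its drift depends on the $z$-coordinates, and the drift of your scalar equation for $Z=\Phi(X)$ does too) and the cited uniqueness result does not apply. This must be derived from (\ref{con:invarPhi1}): on $\{x_1=y^\top y\}$ it reads $\mu_1(x)-2y^\top\mu_Q(x)\geq q-1$ with $y$ and $z$ ranging freely, and an affine-in-$z$ (respectively cubic-in-$y$) expression bounded below forces the corresponding entries of $a$ to vanish --- the very same ``bounded below on an unbounded set'' argument you correctly used to show $B$ is independent of $z$ (a point the paper glosses over, to your credit). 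Once these two points are repaired, the rest stands, and your closing worry is unfounded: since $B$ depends only on $(x_1,y)$, the block $|B-A_2^\top\eta^\top\eta A_2|^{1/2}$ is a determined process once $(X_1,Y)$ is pathwise fixed, the difference of two solutions has identical diffusion terms, and no splitting of the form $B^1\Phi+N(y)$ is needed.
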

\begin{proof}
Conditions~(\ref{con:invarPsi2}) and (\ref{con:invarPsi1}) reduce
to (\ref{con:invarPhi2}) and (\ref{con:invarPhi1}). The ``only
if''-part follows, as these boundary conditions are necessary for
stochastic invariance of $\mathcal{X}$ by
Proposition~\ref{prop:invargeneral}.

Suppose the boundary conditions (\ref{con:invarPhi2}) and
(\ref{con:invarPhi1}) hold. Then by
Proposition~\ref{prop:quadrtheta} it holds that $\theta$ is of the
form (\ref{eq:quadrtheta2}) for some $c\geq0$. If $c=0$, then
necessarily $\mathcal{X}=\{B\geq0\}$ and we take
\[
\sigma(x)=
  \begin{pmatrix}
    0 & 0 \\
    0 & |B(x)|^{1/2}
  \end{pmatrix}.
\]
It is easy to see that (\ref{eq:SDE}) admits a unique strong
solution and that $\mathcal{X}$ is invariant. If $c>0$, then we
apply the linear transformation $x_1\mapsto c^{-1} x_1$, $y\mapsto
c^{-1/2}y$, $z\mapsto z-c^{-1}A_1(x_1,y)$, so that we may assume
$c=1$ and $A_1=0$. Then (\ref{eq:Bgeqbla}) holds and as a square
root of $\theta$ on $\mathcal{X}$ we take
\[
\sigma(x)=
  \begin{pmatrix}
    \xi(x) & 0 \\
    A_2^\top\eta(x)^\top & \rho(x)
  \end{pmatrix},
\]
with $\xi$ and $\rho$ defined by
\begin{align*}
\xi(x)&=\begin{pmatrix}
    2\sqrt{|x_1-y^\top y|} & 2y^\top \\
    0 & \Id
  \end{pmatrix},\\
  \rho&=|B - A_2^\top\eta^\top\eta A_2|^{1/2}.
\end{align*}
To see that $\sigma(x)\sigma(x)^\top=\theta(x)$ for
$x\in\mathcal{X}$, note that $\xi(x)\eta(x)=\eta(x)$ for all
$x\in\mathcal{X}$. Proposition~\ref{prop:invargeneral} gives that
$\mathcal{X}$ is invariant for (\ref{eq:SDE}). It remains to prove
existence and uniqueness of a strong solution.

The boundary condition (\ref{con:invarPhi1}) reads
\[
\mu_1(x)-2y^\top\mu_Q(y)\geq q-1,\mbox{ for all $x$ such that
$x_1=y^\top y$},
\]
so necessarily $\mu_{\{1\}\cup Q}$ admits the form
\begin{align}\label{eq:mu1cupQ}
\mu_{\{1\}\cup Q}(x)=
  \begin{pmatrix}
    a_{11} & a_{1Q} & 0 \\
    0 & a_{QQ} & 0
  \end{pmatrix}
  \begin{pmatrix}
    x_1 \\
    y \\
    z
  \end{pmatrix}+b_{\{1\}\cup Q}.
\end{align}
Proposition~\ref{prop:strongquadr} gives the result.
\end{proof}
\medskip

For stochastic invariance of a parabolic state space
$\mathcal{X}$, we now give sufficient conditions on the diffusion
matrix and sufficient and necessary conditions on the drift,
analogous to the admissibility conditions for polyhedral state
spaces in canonical form.
\begin{prop}
Condition~(\ref{eq:Bgeqbla}) holds if $B$ admits the form
\[
B(x)=(q-2)x_1 A_2^\top A_2 .
\]
\end{prop}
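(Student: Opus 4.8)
The plan is to reduce the matrix inequality (\ref{eq:Bgeqbla}) to a single eigenvalue bound on $\eta(x)^\top\eta(x)$ that can be read off from the defining inequality of the parabola. Substituting $B(x)=(q-2)x_1A_2^\top A_2=A_2^\top\big((q-2)x_1\Id\big)A_2$ into (\ref{eq:Bgeqbla}), the expression to be shown positive semi-definite factors as
\[
B(x)-A_2^\top\eta(x)^\top\eta(x)A_2=A_2^\top\big((q-2)x_1\Id-\eta(x)^\top\eta(x)\big)A_2.
\]
Since conjugation $M\mapsto A_2^\top M A_2$ preserves positive semi-definiteness, it suffices to prove that $(q-2)x_1\Id-\eta(x)^\top\eta(x)\geq0$ for every $x\in\mathcal{X}$.

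The key computation is to determine the spectrum of $\eta(x)^\top\eta(x)$. As $\eta(x)^\top\eta(x)$ and $\eta(x)\eta(x)^\top$ share their non-zero eigenvalues, I would work with the latter. Because the first row of $\eta(x)$ vanishes, $\eta(x)\eta(x)^\top$ is block-diagonal with a zero $(1,1)$ entry and lower-right block $S:=\sum_{1\leq i<j<q}T_{ij}(y)T_{ij}(y)^\top$. Writing $T_{ij}(y)=y_je_i-y_ie_j$ with $e_k$ the standard basis of $\R^{q-1}$ and expanding the sum, tracking diagonal and off-diagonal contributions separately, I expect to obtain the closed form
\[
S=(y^\top y)\Id-yy^\top.
\]
This bookkeeping is the main, though routine, obstacle. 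From this form the eigenvalues of $S$ are immediate: $0$ on the line spanned by $y$ and $y^\top y$ with multiplicity $q-2$ on its orthogonal complement. Hence the largest eigenvalue of $\eta(x)\eta(x)^\top$, and therefore of $\eta(x)^\top\eta(x)$, equals $y^\top y$, so that $\eta(x)^\top\eta(x)\leq(y^\top y)\Id$.

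It then remains to combine this spectral bound with the state space constraint. On $\mathcal{X}=\{\Phi\geq0\}=\{x_1\geq y^\top y\}$ we have $0\leq y^\top y\leq x_1$, so for $q\geq3$ this yields $y^\top y\leq x_1\leq(q-2)x_1$ and thus
\[
(q-2)x_1\Id-\eta(x)^\top\eta(x)\geq\big((q-2)x_1-y^\top y\big)\Id\geq0.
\]
The degenerate case $q=2$ is trivial, since then $\eta\equiv0$ and $B\equiv0$. Conjugating the resulting positive semi-definite matrix by $A_2$ as above gives (\ref{eq:Bgeqbla}), which completes the argument.
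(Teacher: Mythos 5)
Your proof is correct, but the central estimate is genuinely different from the paper's. The paper never computes $\eta(x)\eta(x)^\top$ explicitly: it applies Cauchy--Schwarz row by row, dominating each rank-one matrix $\eta(x)_i^\top\eta(x)_i$ by $\bigl(\eta(x)_i\eta(x)_i^\top\bigr)\Id$, and sums to get $\eta(x)^\top\eta(x)\leq \tr\bigl(\eta(x)\eta(x)^\top\bigr)\Id=(q-2)\bigl(y^\top y\bigr)\Id\leq(q-2)x_1\Id$ on $\mathcal{X}$ --- this trace bound is precisely where the factor $q-2$ in the hypothesis comes from. You instead compute the Gram matrix exactly: your identity $\sum_{1\leq i<j<q}T_{ij}(y)T_{ij}(y)^\top=(y^\top y)\Id-yy^\top$ is correct (the diagonal entries collect $y^\top y-y_k^2$ and the off-diagonal entries collect $-y_ky_l$), giving the sharp spectral bound $\eta(x)^\top\eta(x)\leq(y^\top y)\Id\leq x_1\Id$ on $\mathcal{X}$. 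This is stronger by a factor $q-2$: it shows condition~(\ref{eq:Bgeqbla}) would already hold with $B(x)=x_1A_2^\top A_2$, and the stated hypothesis then follows since $x_1\geq y^\top y\geq0$ on $\mathcal{X}$ and $q\geq3$ (the case $q=2$ being trivial, as you note). Both arguments treat the conjugation step $A_2^\top\bigl((q-2)x_1\Id-\eta(x)^\top\eta(x)\bigr)A_2\geq0$ identically; the paper's route is shorter and avoids the combinatorial bookkeeping, while yours yields the exact spectrum of $\eta(x)\eta(x)^\top$ and a quantitatively stronger conclusion.
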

\begin{proof}
One can show using Cauchy-Schwarz that
\[
\eta(x)_i \eta(x)_i^\top \Id \geq \eta(x)_i^\top \eta(x)_i,\mbox{
for all $x$ and $i$}.
\]
Hence
\[
\left(\sum_i \eta(x)_i \eta(x)_i^\top\right) \Id \geq \sum_i
\eta(x)_i^\top \eta(x)_i.
\]
The right-hand side equals $\eta(x)^\top\eta(x)$, while the
left-hand side is equal to $(q-2)\left(\sum_i y_i^2\right) \Id$,
which is smaller than $(q-2)x_1\Id$ for $x\in\mathcal{X}$. Hence
\[
(q-2) x_1 A_2^\top A_2 - A_2^\top\eta(x)^\top\eta(x) A_2=
A_2^\top((q-2)x_1\Id-\eta(x)^\top\eta(x)) A_2\geq0,
\]
for $x\in\mathcal{X}$, which yields the result.
\end{proof}
\begin{prop}\label{prop:admissiblequad}
Suppose $X$ is an affine diffusion with state space
$\mathcal{X}=\{\theta\geq0\}=\{x\in\R^p:x_1\geq y^\top y\}$ and
$\theta_{QQ}\neq 0$. Then $X$ is a linear transformation of an
affine diffusion with the same state space $\mathcal{X}$,
diffusion matrix $\theta$ of the form (\ref{eq:quadrtheta2}) with
$c=1$ and with drift $\mu(x)=a x + b$ satisfying
(\ref{eq:mu1cupQ}) as well as
\begin{align}
\label{eq:aQdiag} a_{11}\Id-2a_{QQ}&=\diag(d_Q)\\ d_{Q_1} &>0\\
 d_{Q_2} &=0\\ a_{1 Q_2}&=2b_{Q_2}\\
\label{eq:b1geqbla} b_1&\geq q-1+\sum_{i\in Q_1} \quart d_{i}^{-1}
(a_{1i}-2b_{i})^2,
\end{align}
for some vector $d$ and some disjoint $Q_1$ and $Q_2$ with
$Q=Q_1\cup Q_2$.
\end{prop}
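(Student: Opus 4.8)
The plan is to normalise $\theta$ and the state space first, and then to read off the drift conditions directly from the boundary inequality~(\ref{con:invarPhi1}). Since $X$ is an affine diffusion with $\mathcal{X}=\{\theta\geq0\}$ and quadratic boundary $\{\Phi=0\}$ with $\Phi(x)=x_1-y^\top y$, Proposition~\ref{prop:quadrtheta} applies, so $\theta$ has the form~(\ref{eq:quadrtheta2}) for some $c\geq0$. The block $\theta_{QQ}$ is the lower-right $(q-1)\times(q-1)$ corner of $c\zeta(x)$, which equals $c\Id$; hence $\theta_{QQ}\neq0$ forces $c>0$. As in the proof of Theorem~\ref{th:quadr}, the transformation $x_1\mapsto c^{-1}x_1$, $y\mapsto c^{-1/2}y$, $z\mapsto z-c^{-1}A_1(x_1,y)$ preserves $\mathcal{X}=\{x_1\geq y^\top y\}$ and reduces matters to $c=1$, $A_1=0$, so that $\theta_{QQ}=\Id$. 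This $z$-shear does not affect the coefficient $a_{QQ}$ of $y$ in $\mu_Q$ (it fixes $y$), and the $y$-rescaling only rescales constants, so the form of the drift is not yet constrained by this step.

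Next, by the ``only if'' direction of Theorem~\ref{th:quadr} the boundary conditions~(\ref{con:invarPhi2}) and~(\ref{con:invarPhi1}) hold. As established in the proof of that theorem, condition~(\ref{con:invarPhi1}) forces $\mu_{\{1\}\cup Q}$ to be of the form~(\ref{eq:mu1cupQ}): for the affine map $x\mapsto\mu_1(x)-2y^\top\mu_Q(x)$ to stay bounded below on the unbounded boundary $\{x_1=y^\top y\}$, the cubic-in-$y$, the $z$-linear and the bilinear $y$--$z$ terms must vanish. Using $\tr\theta_{QQ}=q-1$, the condition then reduces on $\{x_1=y^\top y\}$ to the scalar inequality
\[
y^\top(a_{11}\Id-2a_{QQ})y+(a_{1Q}-2b_Q^\top)y+b_1\geq q-1,\quad\mbox{for all }y\in\R^{q-1}.
\]

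Now I would apply an orthogonal transformation $y\mapsto Oy$. This preserves $\mathcal{X}$, fixes $\zeta$ (hence the normalised form of $\theta$ and $\theta_{QQ}=\Id$) and the form~(\ref{eq:mu1cupQ}) of the drift, while conjugating the coefficient matrix to $a_{11}\Id-2Oa_{QQ}O^\top=O(a_{11}\Id-2a_{QQ})O^\top$. Choosing $O$ to diagonalise the symmetric part of $a_{11}\Id-2a_{QQ}$ yields~(\ref{eq:aQdiag}). Writing $w=a_{1Q}^\top-2b_Q$, the displayed inequality becomes $\sum_{i\in Q}d_iy_i^2+w^\top y+b_1\geq q-1$ for all $y$. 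Boundedness below forces $d_i\geq0$, giving the split $Q=Q_1\cup Q_2$ with $d_{Q_1}>0$ and $d_{Q_2}=0$; on the flat directions the linear term must vanish, $w_{Q_2}=0$, i.e.\ $a_{1Q_2}=2b_{Q_2}$; and completing the square on $Q_1$ gives minimal value $b_1-\sum_{i\in Q_1}\quart d_i^{-1}w_i^2$, so the inequality is equivalent to~(\ref{eq:b1geqbla}).

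The main obstacle I anticipate is bookkeeping: verifying that the three successive transformations (the scaling, the $z$-shear killing $A_1$, and the orthogonal rotation of $y$) are mutually compatible and jointly preserve all the normalisations $\mathcal{X}$, $c=1$, $A_1=0$, $\theta_{QQ}=\Id$ and the form~(\ref{eq:mu1cupQ}). A secondary subtlety is that the boundary inequality only constrains the symmetric part of $a_{QQ}$ via the quadratic form $y^\top(a_{11}\Id-2a_{QQ})y$; the diagonalisation~(\ref{eq:aQdiag}) is therefore to be read as diagonalising this symmetric part, the remaining skew freedom in $a_{QQ}$ being irrelevant to invariance.
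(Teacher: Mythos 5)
Your proposal is correct and follows essentially the same route as the paper's proof: normalise to $c=1$, $A_1=0$ via Proposition~\ref{prop:quadrtheta} and the transformation used in Theorem~\ref{th:quadr}, extract the form (\ref{eq:mu1cupQ}) and the scalar boundary inequality from condition~(\ref{con:invarPhi1}), diagonalise $a_{11}\Id-2a_{QQ}$ by an orthogonal change of the $y$-coordinates, and complete the square to obtain (\ref{eq:b1geqbla}). Your explicit observation that $\theta_{QQ}\neq0$ forces $c>0$ is exactly the (implicit) role that hypothesis plays in the paper.

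One of your closing remarks is in fact a refinement of the paper's argument rather than a private subtlety: the paper diagonalises $M=a_{11}\Id-2a_{QQ}$ ``by an orthogonal matrix'', which tacitly assumes $M$, equivalently $a_{QQ}$, is symmetric. As you note, the invariance inequality only constrains $y^\top My$, hence only the symmetric part of $a_{QQ}$; and since every admissible transformation (one preserving $\mathcal{X}$ and the normalised form of $\theta$) acts on $a_{QQ}$ by orthogonal conjugation — the scalings and $z$-shears leave it untouched once (\ref{eq:mu1cupQ}) is in force — a non-trivial skew part of $a_{QQ}$ can never be removed. Consequently (\ref{eq:aQdiag}), and the subsequent Remark that $a_{QQ}$ is diagonal so that the coordinates of $X_Q$ are independent, hold literally only when $a_{QQ}$ is symmetric; in general the conclusion must be read, as you propose, with $a_{QQ}$ replaced by its symmetric part $\half(a_{QQ}+a_{QQ}^\top)$. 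On this point your treatment is the more careful of the two.
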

\begin{proof}
As in the proof of Theorem~\ref{th:quadr} we can assume $\theta$
is of the form (\ref{eq:quadrtheta2}) with $c=1$. Then
$\tr(\theta_{QQ}(x))=q-1$ for all $x$, so the boundary
condition~(\ref{con:invarPhi1}) for the drift reads
\[
y^\top(a_{11}\Id-2a_{QQ})y+(a_{1Q}-2b_Q^\top)y+b_1-q+1\geq0,\mbox{
for all }y\in\R^{p-1}.
\]
For this it is necessary that $M:=a_{11}\Id-2a_{QQ}$ is positive
semi-definite. Moreover, if $y$ is in the kernel of $M$, then $y$
should also be in the kernel of $a_{1Q}-2b_Q^\top$. We can
diagonalize $M$ by an orthogonal matrix $O$, so $D=O M O^\top$ is
diagonal with positive diagonal elements $d_i$ for $i\in Q_1$ and
$d_i=0$ for $i\in Q_2=Q\backslash Q_1$, for some $Q_1\subset Q$.
Applying the orthogonal transformation $y\mapsto O y$, the above
condition becomes
\[
\sum_{i\in Q_1} d_i x_i^2+\sum_{i\in
Q_1}(a_{1i}-2b_i)x_i+b_1-q+1\geq0,\mbox{ for all }x_{Q_1}.
\]
We can write the left-hand side as
\[
\sum_{i\in Q_1} d_i (x_i + \half d_i^{-1}(a_{1i}-2b_i))^2 -\quart
\sum_{i\in Q_1}d_i^{-1}(a_{1i}-2b_i)^2 +b_1-q+1,
\]
which is non-negative for all $x_{Q_1}$ if and only if
\[
-\quart \sum_{i\in Q_1}d_i^{-1}(a_{1i}-2b_i)^2 +b_1-q+1\geq 0.
\]
This yields the result.
\end{proof}
\begin{remark}
Note that (\ref{eq:aQdiag}) implies that $a_{QQ}$ is diagonal.
Hence the coordinates of $X_Q$ are mutually independent.
\end{remark}
\begin{prop}\label{prop:quadrinvar}
Consider the situation of Proposition~\ref{prop:admissiblequad}.
If we strengthen condition~(\ref{eq:b1geqbla}) to
\begin{align}\label{eq:sterker}\tag{\ref*{eq:b1geqbla}$^\prime$}
b_1\geq q+1+\sum_{i\in Q_1} \quart d_{i}^{-1} (a_{1i}-2b_{i})^2,
\end{align}
then $\mathcal{X}^\circ$ becomes invariant.
\end{prop}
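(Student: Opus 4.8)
The plan is to obtain invariance of $\mathcal{X}^\circ=\{\Phi>0\}$, $\Phi(x)=x_1-y^\top y$, from the affine open-set criterion Proposition~\ref{prop:nablaphi2}. As in the proof of Theorem~\ref{th:quadr}, I would first reduce to $c=1$ and, by the linear change $z\mapsto z-A_1(x_1,y)$, to $A_1=0$; this preserves $\mathcal{X}$, $\mathcal{X}^\circ$ and the entire $\{1\}\cup Q$-block of the drift, since $\mu_{\{1\}\cup Q}$ depends only on $(x_1,y)$ by (\ref{eq:mu1cupQ}), so that the admissibility relations (\ref{eq:aQdiag})--(\ref{eq:sterker}) are unaffected.

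To apply Proposition~\ref{prop:nablaphi2} I must check its two hypotheses. For (\ref{al:phiv}), using $\nabla\Phi(x)=(1,-2y^\top,0)$ I would compute $(1,-2y^\top)\zeta(x)=(4\Phi(x),0)$ and, since $y^\top T(y)=0$, also $(1,-2y^\top)\eta(x)=0$; with $A_1=0$ this gives $\nabla\Phi(x)\theta(x)=4\Phi(x)\,e_1^\top$, i.e.\ (\ref{al:phiv}) holds with $v=4e_1$. For the constant in (\ref{al:phiv2}) I would use the identity from the proof of Proposition~\ref{prop:nablaphi2}, namely $\nabla\Phi(x)\sum_i(A^i)^i=\nabla\Phi(x)v-\tr(\nabla^2\Phi(x)\theta(x))$; here $\nabla\Phi(x)v=4$ and $\tr(\nabla^2\Phi(x)\theta(x))=-2\tr(\theta_{QQ}(x))=-2(q-1)$, so the sum equals $2q+2$ and $\half\nabla\Phi(x)\sum_i(A^i)^i=q+1$. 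Thus (\ref{al:phiv2}) amounts exactly to $\nabla\Phi(x)\mu(x)\geq q+1$ for all $x\in\mathcal{X}$.

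The heart of the argument is the lower bound on $\nabla\Phi(x)\mu(x)=\mu_1(x)-2y^\top\mu_Q(x)$. Substituting (\ref{eq:mu1cupQ}) and using $a_{11}\Id-2a_{QQ}=\diag(d_Q)$ from (\ref{eq:aQdiag}), I would rewrite this as $a_{11}\Phi(x)+\sum_{i\in Q_1}d_iy_i^2+\sum_{i\in Q_1}(a_{1i}-2b_i)y_i+b_1$, the $Q_2$-terms dropping out because there $d_i=0$ and $a_{1i}=2b_i$. Completing the square in each $y_i$ ($i\in Q_1$, where $d_i>0$) makes the $y$-dependent part nonnegative and leaves the constant $b_1-\quart\sum_{i\in Q_1}d_i^{-1}(a_{1i}-2b_i)^2$, which is $\geq q+1$ precisely by the strengthened bound (\ref{eq:sterker}). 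This yields $\nabla\Phi(x)\mu(x)\geq a_{11}\Phi(x)+(q+1)$ for every $x$.

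The only remaining obstacle is the sign of $a_{11}$: when $a_{11}\geq0$ the term $a_{11}\Phi(x)$ is nonnegative on $\mathcal{X}=\{\Phi\geq0\}$ and (\ref{al:phiv2}) holds at once, but when $a_{11}<0$ the bound secures the inequality only on $\partial\mathcal{X}$. I would remove this exactly as at the end of Proposition~\ref{prop:feller}: pass to an equivalent measure $\Q$ through the Girsanov density $\mathcal{E}(\lambda^\top\sigma(X)\cdot W)$ with $\lambda=\kappa e_1$ (a true martingale by \cite[Corollary~A.9]{part1}), which changes the drift to $\tilde\mu=\mu+\theta\lambda$. A short computation shows this leaves $d$, $a_{1Q}$, $b_Q$ and $b_1$ unchanged—hence preserves (\ref{eq:aQdiag})--(\ref{eq:sterker})—while replacing $a_{11}$ by $a_{11}+4\kappa$. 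Choosing $\kappa$ large enough makes the new leading coefficient nonnegative, so $\nabla\Phi(x)\tilde\mu(x)\geq q+1$ holds on all of $\mathcal{X}$ and Proposition~\ref{prop:nablaphi2} gives invariance of $\mathcal{X}^\circ$ under $\Q$; since $\Q\sim\PP$ this transfers back to $\PP$. I expect this measure-change reduction, together with checking that it preserves the admissibility parameters, to be the only subtle point, the remainder being bookkeeping with $\zeta,\eta$ and completion of squares.
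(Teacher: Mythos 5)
Your proposal is correct and takes essentially the same route as the paper's proof: verify the hypotheses of Proposition~\ref{prop:nablaphi2} by completing squares using (\ref{eq:aQdiag})--(\ref{eq:sterker}) under the extra assumption $a_{11}\geq0$, then dispense with that assumption via the Girsanov change of measure $\mathcal{E}(\lambda^\top\sigma(X)\cdot W)$ with $\lambda$ along the first coordinate, observing that it shifts $a_{11}$ and $a_{QQ}$ in tandem so the admissibility conditions are preserved. Your explicit reduction to $A_1=0$ and the computations $v=4e_1$ and $\half\nabla\Phi(x)\sum_i(A^i)^i=q+1$ merely make precise steps the paper leaves implicit.
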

\begin{proof}
It suffices to verify condition~(\ref{al:phiv}) of
Proposition~\ref{prop:nablaphi2}. Recall that $\Phi(x)=x_1-y^\top
y$ and $\theta(x)=A^0+\sum_{i=1}^p A^i x_i$ is of the form
(\ref{eq:quadrtheta2}) with $c=1$. It follows that
$\nabla\Phi(x)(\mu(x)-\half\sum_{i=1}^p
  (A^i)^i)$ equals
\[
a_{11}x_1+a_{1Q}y-2y^\top a_{QQ}y-2y^\top b_Q+b_1-q-1.
\]
If in addition to conditions~(\ref{eq:aQdiag}) -
(\ref{eq:sterker}) also $a_{11}\geq0$ is imposed, then for
$x\in\mathcal{X}$ the above display is bounded from below by
\[
y^\top(a_{11}\Id-2a_{QQ})y+(a_{1Q}-2b_Q^\top)y+b_1-q-1.
\]
This is non-negative for all $y\in\R^{q-1}$ under the imposed
assumptions, similar as in the proof of
Proposition~\ref{prop:admissiblequad}, which yields
(\ref{al:phiv}). The non-negativity of $a_{11}$ can be dispensed
with, as shown as follows.

By applying a measure transformation with density
$\mathcal{E}(\lambda^\top\sigma(X)\cdot W)$ for some
$\lambda\in\R^p$ with $\lambda_i=0$ for $i\neq 1$ (which yields a
probability measure by~\cite[Corollary~A.9]{part1}), we see that
$\mathcal{X}$ is also invariant for the SDE with drift
$\wmu(x)=ax+b+\theta(x)\lambda=\wa x+b$, where
$\wa_{11}=a_{11}+4\lambda$, $\wa_{QQ}=a_{QQ}+2\lambda\Id$ and the
remaining coordinates unaltered. Note that
$\wa_{11}\Id-2\wa_{QQ}=a_{11}\Id-2a_{QQ}$, so
conditions~(\ref{eq:aQdiag}) - (\ref{eq:sterker}) are not affected
by such a measure transformation. This gives the result.
\end{proof}
\subsection{Conical state space}
Let $p\geq q>1$. For $x\in\R^p$ we write
$x=(x_1,y,z)\in\R^{1}\times\R^{q-1}\times\R^{p-q}$. We consider
the quadratic form
\[
\Phi(x)=x_1^2-\sum_{i\in Q} x_i^2,
\]
where $Q=\{2,\ldots,q\}$ and define affine matrix-valued functions
$\zeta$ and $\eta$ by
\[
\zeta(x)=\begin{pmatrix}
    x_1 & y^\top \\
    y & x_1\Id
  \end{pmatrix},\quad \eta(x)=\begin{pmatrix}
    0 & 0 & \ldots & 0 \\
    T_{12}(y) & T_{13}(y) & \ldots & T_{q-2,q-1}(y)
  \end{pmatrix},
\]
with $T_{ij}:\R^{q-1}\rightarrow\R^{q-1}$ for $1\leq i<j<q$ given
by $T_{ij}(y)_i=y_j$, $T_{ij}(y)_j=-y_i$, $T_{ij}(y)_k=0$ for
$k\neq i,j$. By applying a reflection, we may assume the state
space $\mathcal{X}$ is of the form
$\mathcal{X}=\{\Phi\geq0\}\cap\{x_1\geq 0\}$. Analogously to
Lemma~\ref{lem:basis} and Lemma~\ref{lem:czeta} we have the
following.
\begin{lemma}\label{lem:basis2}
Consider the linear space
\begin{align}\label{eq:linspace2}
\mathcal{L}=\left\{ a:\R^p\rightarrow\R^q\mbox{ affine }\,|\,
  \begin{pmatrix}
    x_1 & -y^\top
  \end{pmatrix}a(x)=0\mbox{ for all $x$ with }x_1^2=y^\top y
\right\}.
\end{align}
Then a basis for $\mathcal{L}$ is formed by the columns of $\zeta$
and $\eta$.
\end{lemma}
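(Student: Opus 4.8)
The plan is to follow the proof of Lemma~\ref{lem:basis} in structure: first verify that the columns of $\zeta$ and $\eta$ are linearly independent elements of $\mathcal{L}$, and then show by a rank--nullity argument that $\dim\mathcal{L}$ equals their number, so that they must span $\mathcal{L}$.

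First I would check membership by direct multiplication. Using the block form of $\zeta$ I would compute
\[
\begin{pmatrix} x_1 & -y^\top \end{pmatrix}\zeta(x)=\begin{pmatrix} x_1^2-y^\top y & x_1y^\top-x_1y^\top \end{pmatrix}=\begin{pmatrix} \Phi(x) & 0 \end{pmatrix},
\]
which vanishes exactly on $\{x_1^2=y^\top y\}$, so each column of $\zeta$ lies in $\mathcal{L}$. For $\eta$, its first row is zero and $y^\top T_{ij}(y)=y_iy_j-y_jy_i=0$ holds identically, hence $\begin{pmatrix} x_1 & -y^\top \end{pmatrix}\eta(x)=0$ for all $x$ and the columns of $\eta$ lie in $\mathcal{L}$ as well. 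For linear independence I would consider a vanishing combination $\sum_{j=1}^q c_j\zeta^j+\sum_{i<j}d_{ij}\eta^{(i,j)}=0$ and read off its first component: the $\eta$-columns contribute nothing there and the top row of $\zeta$ is $\begin{pmatrix} x_1 & y^\top \end{pmatrix}$, so this component equals $\sum_{j=1}^q c_jx_j$ and forces all $c_j=0$. The residual identity $\sum_{i<j}d_{ij}T_{ij}(y)=0$ then forces each $d_{ij}=0$, since the coefficient of $y_j$ in the $i$-th slot of the left-hand side isolates $d_{ij}$.

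The dimension count is the crux. As in Lemma~\ref{lem:basis} I would introduce
\[
L:\mbox{Aff}(\R^p,\R^q)\rightarrow\mbox{Quadr}(\R^p,\R)/(x_1^2-y^\top y):a\mapsto\begin{pmatrix} x_1 & -y^\top \end{pmatrix}a(x),
\]
the target being quadratic forms modulo the one-dimensional span of $\Phi=x_1^2-y^\top y$, and identify $\mathcal{L}=\ker L$ via Lemma~\ref{lem:psiiscphi}: on the open set $\{y\neq0\}$ one has $\Phi(x)=(x_1-\sqrt{y^\top y})(x_1+\sqrt{y^\top y})$, a product of two distinct factors linear in $x_1$, so any quadratic $\begin{pmatrix} x_1 & -y^\top \end{pmatrix}a$ vanishing on $\{\Phi=0\}$ equals $C(x)\Phi(x)$ with $C$ constant by a degree count. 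With $\dim\mbox{Aff}(\R^p,\R^q)=(p+1)q$ it remains to compute $\dim\mbox{im}\,L$. The decisive structural point, in contrast to the parabolic case, is that every entry of $\begin{pmatrix} x_1 & -y^\top \end{pmatrix}a(x)$ carries a factor $x_j$ with $1\le j\le q$, so the image has no constant term: it is spanned by $\{x_j:1\le j\le q\}$ and $\{x_jx_k:1\le j\le q,\,1\le k\le p\}$, and modulo $\Phi$ the lone relation $x_1^2\equiv y^\top y$ lets me discard $x_1^2$. This yields $\dim\mbox{im}\,L=q+\big(qp-\binom{q}{2}\big)-1$, hence
\[
\dim\mathcal{L}=(p+1)q-\Big(q+qp-\binom{q}{2}-1\Big)=\binom{q}{2}+1=q+\binom{q-1}{2},
\]
exactly the number of columns of $\zeta$ and $\eta$ (the last equality being Pascal's rule), which therefore form a basis.

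I expect the main obstacle to be the correct accounting of $\dim\mbox{im}\,L$: one must notice that the homogeneous quadratic $\Phi$ (rather than $x_1-y^\top y$ of Lemma~\ref{lem:basis}) both removes the constant monomial from the image and confines its relation to the quadratic monomials, and one must invoke Lemma~\ref{lem:psiiscphi} cleanly by exhibiting the distinct-roots factorization of $\Phi$ on the open set $\{y\neq0\}$.
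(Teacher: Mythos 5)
Your proof is correct and follows exactly the route the paper intends: the paper's own proof of this lemma is simply ``Similar to the proof of Lemma~\ref{lem:basis}'', and you carry out precisely that adaptation, with the dimension count via the operator $L$ into quadratics modulo $\Phi$ and Lemma~\ref{lem:psiiscphi} applied on $\{y\neq 0\}$ where $\Phi$ has the two distinct roots $\pm\sqrt{y^\top y}$. Your accounting of the differences from the parabolic case (no constant term in the image, the relation confined to the quadratic monomials, and the identity $\binom{q}{2}+1=q+\binom{q-1}{2}$) is exactly what makes the kernel dimension match the number of columns of $\zeta$ and $\eta$.
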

\begin{proof}
Similar to the proof of Lemma~\ref{lem:basis}.
\end{proof}
\begin{lemma}
Consider the linear space
\[
\mathcal{M}=\left\{M:\R^p\rightarrow\R^{q\times q}\mbox{
affine}\,|\, M(x) \mbox{ symmetric and }M^i\in\mathcal{L}\mbox{
for all }x,i \right\}
\]
with $\mathcal{L}$ defined by (\ref{eq:linspace2}). Then a basis
for $\mathcal{M}$ is given by
\[
\mathcal{B}=\{\zeta,\rho(1),\ldots,\rho(q-1)\},
\]
with $\rho(i)$ an affine symmetric-matrix valued function defined
by
\begin{align*}
\rho(i)_{i+1}:x&\mapsto
  \begin{pmatrix}
    x & y^\top
  \end{pmatrix},\\
\rho(i)_{11}:x&\mapsto y_i, &\\
%\rho(i)^{i+1}&=\rho(i)_{i+1}^\top, &
\rho(i)_{jj}:x&\mapsto-y_i,\,\mbox{ for
}j\neq 1,i+1,  \\
 \rho(i)_{jk}:x&\mapsto0,\,\mbox{ if $j,k\neq i$ and $j\neq k$.}
\end{align*}
\end{lemma}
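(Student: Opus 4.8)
The plan is to mirror the proof of Lemma~\ref{lem:czeta}, the difference being that here the symmetric elements of $\mathcal{M}$ form a $q$-dimensional space rather than a line. First I would record that $\mathcal{B}\subset\mathcal{M}$: each $\rho(i)$ is symmetric by construction, and a short direct computation (or an appeal to Lemma~\ref{lem:basis2}) shows that each of its columns lies in $\mathcal{L}$, the same being true of $\zeta$. Linear independence of $\mathcal{B}$ is then immediate from the $(1,1)$-entries, since a vanishing combination $c_0\zeta+\sum_i c_i\rho(i)$ has $(1,1)$-entry $c_0 x_1+\sum_i c_i y_i$, and $x_1,y_1,\dots,y_{q-1}$ are linearly independent linear functions, forcing all $c_j=0$. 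It then remains to prove that $\mathcal{B}$ spans $\mathcal{M}$.

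For spanning I would take an arbitrary $M\in\mathcal{M}$ and, since each of its columns lies in $\mathcal{L}$, use Lemma~\ref{lem:basis2} to write $M=\zeta A+\eta B$ for constant matrices $A\in\R^{q\times q}$ and $B$; this representation is unique because the columns of $\zeta$ and $\eta$ form a basis of $\mathcal{L}$. Splitting $A$ into blocks $A_{11},A_{1Q},A_{Q1},A_{QQ}$ and $B=\begin{pmatrix}B^1 & \wB\end{pmatrix}$ exactly as in Lemma~\ref{lem:czeta}, I would expand the four blocks of $M$ and impose symmetry. The key observation is that $T(y)\beta$ equals $S_\beta y$, where $S_\beta$ is the antisymmetric matrix whose strict upper triangle is $\beta$, and dually $y^\top T(y)=0$. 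Matching the $x_1$-homogeneous and $y$-homogeneous parts of the off-diagonal-block identity yields $A_{1Q}=A_{Q1}^\top$ and $A_{QQ}=A_{11}\Id+S_{B^1}$, while symmetry of the bottom-right block gives that $A_{QQ}$ is symmetric; since $S_{B^1}$ is antisymmetric this forces $S_{B^1}=0$, hence $B^1=0$ and $A_{QQ}=A_{11}\Id$.

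With these relations in hand I would subtract off the obvious candidates and show the remainder vanishes. Set $M'=M-A_{11}\zeta-\sum_{i=1}^{q-1}(A_{1Q})_i\,\rho(i)$. Writing each $\rho(i)=\zeta A^{(i)}+\eta B^{(i)}$ and reading off $A^{(i)}_{11}=0$, $A^{(i)}_{1Q}=e_i^\top$, $A^{(i)}_{Q1}=e_i$, $A^{(i)}_{QQ}=0$, a direct block computation (using $A_{QQ}=A_{11}\Id$ and $A_{Q1}=A_{1Q}^\top$) shows that the $\zeta$-coefficient of $M'$ is the zero matrix. Hence $M'=\eta B'$ for some $B'$. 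Symmetry of $M'$ then forces its first row and column to vanish, so $T(y)(B')^1=0$ and thus $(B')^1=0$ by independence of the columns of $T(y)$, while its bottom-right block $T(y)\wB'$ is symmetric. Since $y^\top T(y)\wB'=0$, Lemma~\ref{lem:thetais0} applies to the linear map $y\mapsto T(y)\wB'$ and gives $T(y)\wB'=0$, whence $\wB'=0$ and $M'=0$. Therefore $M$ lies in the span of $\mathcal{B}$.

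I expect the main obstacle to be the symmetry bookkeeping in the middle step, namely correctly separating the affine block identities into their $x_1$- and $y$-homogeneous parts and recognizing the antisymmetric matrix $S_\beta$ hidden inside $T(y)\beta$; this is precisely where the conical $\zeta$ (with its block $x_1\Id$ in place of the parabolic $\Id$) produces the extra $q-1$ solutions $\rho(i)$ that are absent in Lemma~\ref{lem:czeta}. The final reduction is then a clean reuse of Lemma~\ref{lem:thetais0}, exactly as in the parabolic case.
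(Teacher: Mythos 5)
Your proof is correct and follows essentially the same route as the paper's: the same decomposition $M=\zeta A+\eta B$ via Lemma~\ref{lem:basis2}, the same subtraction of $A_{11}\zeta+\sum_i (A_{1Q})_i\,\rho(i)$, and the same final appeal to Lemma~\ref{lem:thetais0} to kill the remainder. The only difference is cosmetic: where the paper gets $A_{QQ}=A_{11}\Id$ from $0=y^\top T(y)B^1=y^\top(A_{QQ}-A_{11}\Id)y$ together with symmetry, you identify $T(y)B^1$ as $Sy$ for an antisymmetric matrix $S$ and use the symmetric/antisymmetric splitting, which in addition hands you $B^1=0$ explicitly (a fact the paper only uses implicitly when asserting the form of $N$).
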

\begin{proof}
Clearly the elements of $\mathcal{B}$ are linearly independent
elements of $\mathcal{M}$. It remains to show that they span
$\mathcal{M}$.

Let $M\in\mathcal{M}$ be arbitrary. By Lemma~\ref{lem:basis2}
there exist matrices $A$ and $B$ such that
\[
M(x)=\zeta(x) A + \eta(x) B.
\]
Write $Q=\{2,\ldots,q\}$, $T(y)=(T_{ij}(y))_{1\leq i<j<q}$ and $B=
  \begin{pmatrix}
    B^1 & \wB
  \end{pmatrix}$. Then the above display reads
\[
M(x)=
  \begin{pmatrix}
    x_1 A_{11} + y^\top A_{Q1} &  x_1 A_{1Q} + y^\top A_{QQ} \\
    y A_{11} + x_1 A_{Q1} +T(y) B^1 & y A_{1Q} + x_1 A_{QQ} +T(y) \wB
  \end{pmatrix}
\]
Symmetry of $M(x)$ yields
\begin{align*}
A_{1Q}&=A_{Q1}^\top,\\
 A_{QQ}&=A_{QQ}^\top,\\ y A_{11}  +T(y) B^1 &=
 A_{QQ}^\top y, \\y A_{1Q}  +T(y)\wB &= (y A_{1Q}
 +T(y)\wB)^\top.
\end{align*}
Since $y^\top T(y)=0$, the second equation together with the third
gives
\[
0=y^\top T(y) B^1= y^\top (A_{QQ}-A_{11}\Id )y,
\]
which implies $A_{QQ}-A_{11}\Id=0$, as $A_{QQ}-A_{11}\Id$ is
symmetric and thus diagonalizable by an orthogonal matrix. Define
\[
N=M-A_{11}\zeta-\sum_{i\in Q} A_{1i}\rho(i).
\]
Then $N\in\mathcal{M}$ and $N$ is of the form
\[
N(x)=
  \begin{pmatrix}
    0 & 0 \\
    0 & \sum_{k\in Q} C^k y_k
  \end{pmatrix},
\]
for some symmetric $((q-1)\times(q-1))$-matrices $C^k$. By
Lemma~\ref{lem:thetais0} it follows that $N=0$.
\end{proof}
\medskip

Unlike the parabolic case, for a general {\em closed} conical
state space we are not able to find a square root
 such that strong existence and
uniqueness for the resulting SDE can be proved.
%There are two
%exceptions though. The first exception is the two-dimensional
%cone, as this is just a polyhedron which has already been
% covered in Section~\ref{sec:polyhedral}. The second one is
An exception is the two-dimensional cone, as this is just a
polyhedron which has already been
 covered in Section~\ref{sec:polyhedral}. The following example
 shows that problems appear for closed cones in higher dimensions.
% We leave the
% question of existence of an affine diffusion with conical state space open for further research.
\begin{example}\label{ex:zeta}
For $p=q=3$, a basis for $\mathcal{M}$ is given by
\begin{align*}
\zeta(x)=
  \begin{pmatrix}
    x_1 & y_1 & y_2  \\
    y_1 & x_1 & 0  \\
    y_2 & 0 & x_1
  \end{pmatrix},\quad   \rho(1)(x)&=\begin{pmatrix}
    y_1 & x_1 & 0  \\
    x_1 & y_1 & y_2  \\
    0 & y_2 & -y_1
  \end{pmatrix},\\  \rho(2)(x)&= \begin{pmatrix}
    y_2 & 0 & x_1  \\
    0 & -y_2 & y_1  \\
    x_1 & y_1 & y_2
  \end{pmatrix}.
\end{align*}
Note that not only $\zeta$ but also $\zeta+\rho(1)$ and
$\zeta+\rho(2)$ are positive semi-definite on
$\mathcal{X}=\{\Phi\geq0\}\cap\{x_1\geq0\}=\{x\in\R^3:x_1\geq0,x_1^2\geq
y^\top y\}$. The structure of these matrices appears to be too
complex to compute a manageable square root.
\end{example}
\medskip

However, Proposition~\ref{prop:nablaphi2} enables us to derive
sufficient conditions for stochastic invariance of the {\em open}
conical state space $\{\zeta>0\}$. This can be used to show
existence of a unique strong solution for the affine
SDE~(\ref{eq:SDE}) with square root $\sigma=|\zeta|^{1/2}$, see
the next proposition. Note that this approach is not applicable
for $\zeta+\rho(1)$ and $\zeta+\rho(2)$ in Example~\ref{ex:zeta},
as these matrices are singular on the whole of $\R^3$. We leave
the question of existence of an affine diffusion with a
\emph{closed} conical state space open for further research.
\begin{theorem}\label{th:strongsolconical}
There exists an affine SDE with drift $\mu(x)=a x+b$, diffusion
matrix
\[
\theta(x)=
  \begin{pmatrix}
    x_1 & y^\top \\
    y & x_1\Id
  \end{pmatrix},
\]
and conical state space $\mathcal{X}=\{\theta>0\}=\{x_1>(y^\top
y)^{1/2}\}$ if
\begin{align}
a_{1Q}-a_{Q1}^\top&=0\label{al:admissiblecone1}\\
a_{11}\Id-a_{QQ}&\geq0\label{al:admissiblecone2}\\
b_1-\half p -\|b_Q\|&\geq\label{al:admissiblecone3}0.
\end{align}
\end{theorem}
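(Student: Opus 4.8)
The plan is to take the square root $\sigma=|\theta|^{1/2}$, which on the open cone $\mathcal{X}=\{\theta>0\}$ coincides with the unique positive definite square root $\theta^{1/2}$ and is therefore locally Lipschitz there. Since $\sigma$ is continuous on $\R^p$ and $\theta$ has linear growth, weak solutions to~(\ref{eq:SDE}) exist. Following \cite{pfaffel09} and the programme outlined in the introduction, once I establish invariance of $\mathcal{X}$ the strong existence and uniqueness will follow: any two solutions driven by the same Brownian motion from a common initial point $x_0\in\mathcal{X}$ both stay in $\{\theta>0\}$, where $\sigma$ is locally Lipschitz, so a standard localisation over a compact exhaustion of $\{\theta>0\}$ gives pathwise uniqueness, and Yamada--Watanabe then yields a unique strong solution. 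Thus the crux is stochastic invariance of $\mathcal{X}^\circ=\{\theta>0\}$, for which I would invoke Proposition~\ref{prop:nablaphi2}.

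To apply Proposition~\ref{prop:nablaphi2} I take $\Phi(x)=x_1^2-y^\top y$, so that $\mathcal{X}^\circ$ is the connected component of $\{\Phi>0\}$ on which $x_1>0$, and $\nabla\Phi(x)=2(x_1,\,-y^\top)$. A block computation gives $\nabla\Phi(x)\theta(x)=\Phi(x)\,(2,0,\ldots,0)$, so~(\ref{al:phiv}) holds with the constant vector $v=2e_1$. Reading off the coefficient matrices of $\theta$ one finds $A^1=\Id$ and $A^k=e_1e_k^\top+e_ke_1^\top$ for $k\in Q=\{2,\ldots,p\}$, whence $\sum_{i=1}^p(A^i)^i=p\,e_1$. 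Substituting this into~(\ref{al:phiv2}) and using~(\ref{al:admissiblecone1}) to cancel the cross terms $x_1\,a_{1Q}y$ and $x_1\,a_{Q1}^\top y$, condition~(\ref{al:phiv2}) reduces to checking that $a_{11}x_1^2-y^\top a_{QQ}y+(b_1-\half p)x_1-y^\top b_Q\ge0$ for all $x$ in the closed cone $\{x_1\ge\|y\|\}$.

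For the quadratic part, (\ref{al:admissiblecone2}) gives $y^\top a_{QQ}y\le a_{11}\|y\|^2$, so $a_{11}x_1^2-y^\top a_{QQ}y\ge a_{11}(x_1^2-\|y\|^2)$, which is non-negative on the cone \emph{provided} $a_{11}\ge0$; for the linear part, Cauchy--Schwarz together with $x_1\ge\|y\|\ge0$ and~(\ref{al:admissiblecone3}) gives $(b_1-\half p)x_1-y^\top b_Q\ge\|y\|(b_1-\half p-\|b_Q\|)\ge0$. The one gap is that $a_{11}\ge0$ is not among the hypotheses—indeed, at $y=0$ and $x_1\to\infty$ one sees it is \emph{forced} by~(\ref{al:phiv2})—and this is the main obstacle. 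I would remove it exactly as in Proposition~\ref{prop:quadrinvar}, by a measure transformation with density $\mathcal{E}(\lambda^\top\sigma(X)\cdot W)$ for $\lambda=\lambda_1 e_1$, which replaces the drift by $\mu(x)+\lambda_1\theta(x)e_1$, i.e.\ shifts $a_{11}\mapsto a_{11}+\lambda_1$ and $a_{QQ}\mapsto a_{QQ}+\lambda_1\Id$ while leaving $a_{1Q}$, $a_{Q1}$, $b$ and the combination $a_{11}\Id-a_{QQ}$ unchanged; hence~(\ref{al:admissiblecone1})--(\ref{al:admissiblecone3}) are preserved, and choosing $\lambda_1$ large forces $a_{11}\ge0$. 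Since this density yields a genuine probability measure equivalent to $\PP$ by~\cite[Corollary~A.9]{part1}, invariance of $\mathcal{X}^\circ$ for the transformed SDE transfers back to the original one. Combining the verified conditions~(\ref{al:phiv}) and~(\ref{al:phiv2}) with Proposition~\ref{prop:nablaphi2} gives invariance, and together with the strong-solution argument of the first paragraph this completes the proof.
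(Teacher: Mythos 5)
Your proposal is correct and follows essentially the same route as the paper: the square root $\sigma=|\theta|^{1/2}$, invariance of the open cone via Proposition~\ref{prop:nablaphi2} with $\Phi(x)=x_1^2-y^\top y$, the same Cauchy--Schwarz estimate, and the same measure-change trick (as in Proposition~\ref{prop:quadrinvar}, justified by \cite[Corollary~A.9]{part1}) to dispense with the auxiliary assumption $a_{11}\geq 0$. The only differences are cosmetic: you spell out the pathwise-uniqueness localisation that the paper delegates to \cite[Theorem~5.2.5]{Karshr}, and you factor out $\|y\|$ rather than $x_1$ in the final inequality (which is fine, since (\ref{al:admissiblecone3}) gives $b_1-\half p\geq\|b_Q\|\geq 0$).
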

\begin{proof}
 Let $\sigma=|\theta|^{1/2}$. Then $\sigma$ is locally Lipschitz continuous on
$\mathcal{X}$, so strong existence and uniqueness for
(\ref{eq:SDE}) follows (see~\cite[Theorem~5.2.5]{Karshr}) as soon
as we have shown stochastic invariance of $\mathcal{X}$. It holds
that $\mathcal{X}$ is a connected component of $\{\Phi>0\}$, with
$\Phi(x)=x_1^2-y^\top y$. Therefore, in view of
Proposition~\ref{prop:nablaphi2}, it suffices to prove
(\ref{al:phiv}) and (\ref{al:phiv2}). The first condition is
immediate. For the second one, a calculation shows that
%\[
%\nabla\Phi(x)\theta(x)=\Phi(x)
%  \begin{pmatrix}
%    2 & 0 & \ldots & 0
%  \end{pmatrix},
%\]
$\nabla\Phi(x)(\mu(x)-\half\sum_{i=1}^p
  (A^i)^i)$ equals
\[
2(a_{11}x_1^2+ x_1(a_{1Q}-a_{Q1}^\top)y + (b_1-\half p) x_1 -
y^\top a_{QQ} y - b_Q^\top y).
\]
This is non-negative for all $x\in\mathcal{X}$ if
(\ref{al:admissiblecone1}) - (\ref{al:admissiblecone3}) hold and
$a_{11}\geq0$. Indeed, in that case we have for
$x\in\mathcal{X}=\{x_1^2> y^\top y \}\cap\{x_1>0\}$ that
\begin{align*}
&a_{11}x_1^2+ x_1(a_{1Q}-a_{Q1}^\top)y + (b_1-\half p) x_1 -
y^\top a_{QQ} y - b_Q^\top y\\&\qquad\geq
y^\top(a_{11}\Id-a_{QQ})y+(b_1-\half p) x_1 -\langle
b_Q,y\rangle\\&\qquad\geq y^\top(a_{11}\Id-a_{QQ})y+(b_1-\half
p-\|b_Q\|) x_1\geq0,
\end{align*}
since $-\langle b_Q,y\rangle \geq -\|b_Q \| \|y\|\geq -\|b_Q \|
x_1$ by Cauchy-Schwarz. The non-negativity of $a_{11}$ can be
dispensed with, by the same arguments as in the proof of
Proposition~\ref{prop:quadrinvar}.
% as shown as follows.
%
%By applying a measure transformation with density
%$\mathcal{E}(\lambda^\top\sigma(X)\cdot W)$ for some
%$\lambda\in\R^p$ with $\lambda_Q=0$ (which is justified by
%Corollary~\ref{cor:sigmainverse}), we see that $\mathcal{X}$ is
%also invariant for the SDE with drift
%$\wmu(x)=ax+b+\theta(x)\lambda=\wa x+b$, where
%$\wa_{11}=a_{11}+\lambda$, $\wa_{QQ}=a_{QQ}+\lambda\Id$ and the
%remaining coordinates unaltered. Note that
%$\wa_{11}\Id-\wa_{QQ}=a_{11}\Id-a_{QQ}$, so
%conditions~(\ref{al:admissiblecone1}) - (\ref{al:admissiblecone3})
%are not affected by such a measure transformation. This gives the
%result.
\end{proof}
\appendix

\section{Convex analysis}\label{sec:convexgeo}
In this section we state and prove the results on convex analysis
applied in Section~\ref{sec:polyhedral}. Let $\mathcal{X}$ be
given by (\ref{eq:convexpolyX}) and in addition to an affine
function $u$ we are given an affine function $d$ by
\[
d:\R^p\rightarrow\R:x\mapsto a x +b,
\]
for some $a\in\R^{1\times p}$, $b\in\R$.
Proposition~\ref{prop:sht} below is the main result, which yields
Proposition~\ref{prop:driftpos} to tackle the drift and
Proposition~\ref{prop:uiscv} to tackle the diffusion matrix of
affine diffusions with non-canonical polyhedral state space.
%These
%results improve upon the results given in the appendix of
%\cite{dk96}.
\begin{prop}\label{prop:sht}
Suppose $\mathcal{X}\subset\{d\geq0\}$. Then there exist $c\geq0$
and $\lambda\in\R^{1\times q}_{\geq0}$, such that
\[
d=\lambda u+c.
\]
\end{prop}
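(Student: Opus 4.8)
The plan is to recognise this statement as the affine version of Farkas' lemma and to prove it by homogenisation, reducing to the classical (homogeneous) Farkas lemma for polyhedral cones. Throughout I would assume $\mathcal{X}\neq\emptyset$, which holds in all our applications since there $\mathcal{X}^\circ\neq\emptyset$. Writing $d(x)=ax+b$ and $u(x)=\gamma x+\delta$, the desired conclusion $d=\lambda u+c$ is equivalent to producing $\lambda\in\R^{1\times q}_{\geq0}$ and $c\geq0$ with $a=\lambda\gamma$ and $b=\lambda\delta+c$. To access Farkas' lemma I would introduce a homogenising coordinate $t$ and work in $\R^{1+p}$ with the polyhedral cone
\[
K=\{(t,x)\in\R^{1+p}: t\geq0,\ \gamma x+\delta t\geq0\}.
\]
The heart of the argument is then to show that the linear functional $(t,x)\mapsto ax+bt$ is nonnegative on all of $K$. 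For $t>0$ this is immediate: $x/t\in\mathcal{X}$ because $\gamma(x/t)+\delta=t^{-1}(\gamma x+\delta t)\geq0$, so the hypothesis gives $a(x/t)+b\geq0$, and multiplying by $t>0$ yields $ax+bt\geq0$.

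The step I expect to be the main obstacle is the boundary case $t=0$, which is exactly where feasibility of $\mathcal{X}$ enters. Here one only knows $\gamma x\geq0$ and must deduce $ax\geq0$; geometrically this says that $a$ is nonnegative on the recession cone of $\mathcal{X}$. To prove it I would fix any $x^\ast\in\mathcal{X}$ and observe that $x^\ast+sx\in\mathcal{X}$ for every $s>0$, since $\gamma(x^\ast+sx)+\delta=(\gamma x^\ast+\delta)+s\,\gamma x\geq0$ with both summands nonnegative. The hypothesis $\mathcal{X}\subset\{d\geq0\}$ then gives $a(x^\ast+sx)+b\geq0$ for all $s>0$; dividing by $s$ and letting $s\to\infty$ forces $ax\geq0$, i.e.\ $ax+bt\geq0$ also when $t=0$. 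This limiting/recession argument is the only genuinely non-formal point, and it is precisely the place where the emptiness of $\mathcal{X}$ would have to be excluded.

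Finally, $K$ is a polyhedral cone of the form $\{z:Mz\geq0\}$ whose defining rows are $(1,0,\dots,0)$, encoding $t\geq0$, together with the rows $(\delta_i,\gamma_i)$ for $i\in Q$. Since the linear functional with coefficient vector $(b,a)$ has just been shown to be nonnegative on $K$, the homogeneous Farkas lemma supplies nonnegative multipliers: a scalar $c\geq0$ for the row $t\geq0$ and coefficients $\lambda_i\geq0$ for the rows $\gamma x+\delta t\geq0$, satisfying $(b,a)=c\,(1,0,\dots,0)+\sum_{i\in Q}\lambda_i(\delta_i,\gamma_i)$. Reading off the two blocks of this identity gives $a=\lambda\gamma$ and $b=\lambda\delta+c$, which is exactly $d=\lambda u+c$ with $\lambda\in\R^{1\times q}_{\geq0}$ and $c\geq0$. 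Everything beyond the recession-cone step is bookkeeping and an appeal to the classical Farkas lemma.
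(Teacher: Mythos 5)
Your proof is correct, and it takes a recognizably different route from the paper's. The paper argues by contradiction: it forms the finitely generated cone $\mathcal{K}=\{(\lambda\gamma,\lambda\delta+c):\lambda\in\R^{1\times q}_{\geq0},\,c\geq0\}$ in the space of coefficient pairs, strictly separates $(a,b)$ from $\mathcal{K}$ via the Separating Hyperplane Theorem, and then uses the separating vector $(y,y_0)$ to manufacture a point of $\mathcal{X}$ where $d<0$ --- the point $y/y_0$ when $y_0>0$, or the ray $x_0+Ny$ with $x_0\in\mathcal{X}$ when $y_0=0$. You instead homogenize, verify directly that $(t,x)\mapsto ax+bt$ is nonnegative on the cone $K=\{t\geq0,\ \gamma x+\delta t\geq0\}$, and invoke the homogeneous Farkas lemma as a black box. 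The two arguments are dual presentations of the same underlying fact: your case split $t>0$ versus $t=0$ mirrors the paper's $y_0>0$ versus $y_0=0$, and your recession argument at $t=0$ is exactly the paper's ray argument. What your packaging buys is modularity (the separation and the closedness of the finitely generated cone $\mathcal{K}$, which the paper asserts without comment, are absorbed into the cited Farkas lemma) and, importantly, an explicit localization of where nonemptiness of $\mathcal{X}$ is needed. The paper uses $x_0\in\mathcal{X}$ silently in its $y_0=0$ case, yet the statement as written is false for $\mathcal{X}=\emptyset$ (take $u\equiv-1$ and $d(x)=x_1$); in the paper's context this is harmless because Section~4 assumes $\mathcal{X}^\circ\neq\emptyset$, but your explicit flag is the more careful formulation. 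What the paper's approach buys in exchange is self-containedness: it proves the affine Farkas statement from the Separating Hyperplane Theorem alone, rather than assuming the reader has the conic Farkas lemma at hand.
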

\begin{proof}
We give a proof by contradiction. Let
\[
\mathcal{K}=\{(\lambda\gamma,\lambda\delta+c):
\lambda\in\R^{1\times q}_{\geq0},c\geq0\}.
\]
Suppose $(a,b)\not\in\mathcal{K}$. Since $\mathcal{K}$ is a closed
convex set, $(a,b)$ is strictly separated from $\mathcal{K}$ by
the Separating Hyperplane Theorem. Therefore, there exist
$y\in\R^p$ and $y_0\in\R$ such that $\langle
(y,y_0),(k,k_0)\rangle>\langle (y,y_0),(a,b)\rangle$ for all
$(k,k_0)\in\mathcal{K}$, i.e.\
\[
k y + k_0 y_0>ay+by_0\quad\mbox{for all $(k,k_0)\in\mathcal{K}$}.
\]
In other words, for all $\lambda_i\geq0$ and $c\geq0$ we have
\[
\sum_i\lambda_i(\gamma_iy+\delta_iy_0) + c y_0>ay+by_0.
\]
It easily follows that
\begin{align}
ay+by_0&<0\\
\gamma_iy+\delta_iy_0&\geq0\\
y_0&\geq0.
\end{align}
Using this we construct $x\in\mathcal{X}$ for which $d(x)<0$.
Suppose $y_0>0$. Then we take $x=y/y_0$. Indeed,
$u_i(x)=(\gamma_iy+\delta_iy_0)/y_0\geq0$, so $x\in\mathcal{X}$.
But $d(x)=(ay+by_0)/y_0<0$, which is a contradiction. Suppose
$y_0=0$. Then we take an arbitrary $x_0\in\mathcal{X}$ and let
$x_N=x_0+Ny$, with $N\in\N$. Then
$u_i(x_N)=u_i(x_0)+N\gamma_iy\geq0$ for all $N$, so
$x_N\in\mathcal{X}$, but $d(x_N)=d(x_0)+Nay<0$ for $N$ big enough.
\end{proof}
\begin{prop}\label{prop:driftpos}
Suppose $\partial{\mathcal{X}}_i\subset\{d\geq 0\}$ for some $i\in
Q$. Then there exist $c\geq0$ and $\lambda\in\R^{1\times q}$ with
$\lambda_{j}\geq0$ for $j\in Q\backslash\{i\}$, such that
\[
d=\lambda u+c.
\]
%That is (recall $\gamma\in\R^{q\times p}$, $\delta\in\R^q$), $\lambda\in\R^{q \times q}$, $c\in\R^q$ such that
%\[
%\gamma a = \lambda \gamma, \quad \gamma b = \lambda\delta+c.
%\]
\end{prop}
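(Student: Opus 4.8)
The plan is to deduce this from Proposition~\ref{prop:sht}, the only difference being that here the nonnegativity of $d$ is assumed merely on the boundary segment $\XXi=\mathcal{X}\cap\{u_i=0\}$ rather than on all of $\mathcal{X}$, and in return we are allowed a coefficient $\lambda_i$ of arbitrary sign. The key observation is that $\XXi$ is itself a convex polyhedron of the form (\ref{eq:convexpolyX}): writing the single equality $u_i=0$ as the two inequalities $u_i\geq0$ and $-u_i\geq0$, we have
\[
\XXi=\{x\in\R^p:u_j(x)\geq0\text{ for }j\in Q\backslash\{i\},\ u_i(x)\geq0,\ -u_i(x)\geq0\}.
\]

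First I would record that $\XXi\neq\emptyset$ (which holds under the standing minimality of $Q$), so that Proposition~\ref{prop:sht} genuinely applies to this polyhedron. Applying it with $d\geq0$ on $\XXi$ and with the enlarged list of defining affine functions $\{u_j\}_{j\neq i}\cup\{u_i,-u_i\}$ produces nonnegative multipliers $\lambda_j\geq0$ ($j\neq i$), $\mu^+\geq0$, $\mu^-\geq0$ and a constant $c\geq0$ with
\[
d=\sum_{j\in Q\backslash\{i\}}\lambda_j u_j+\mu^+u_i-\mu^-u_i+c.
\]
Setting $\lambda_i:=\mu^+-\mu^-$ then gives $d=\lambda u+c$ with $\lambda_j\geq0$ for $j\in Q\backslash\{i\}$ and $c\geq0$, while $\lambda_i$ is unconstrained in sign, exactly as claimed.

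The whole argument is a short reduction, so I do not expect a genuine obstacle. The one point requiring a little care is the step that frees the sign of $\lambda_i$: it is precisely the fact that $u_i$ enters the defining list of $\XXi$ with both signs that allows its (a priori nonnegative) multipliers to be combined into a difference $\mu^+-\mu^-$ of arbitrary sign, whereas each $u_j$ with $j\neq i$ enters only once and keeps its nonnegative multiplier. I would also double-check that Proposition~\ref{prop:sht} is invoked correctly, namely that its proof uses only that the underlying set is a nonempty intersection of halfspaces and not the minimality of the original index set $Q$.
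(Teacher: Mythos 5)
Your proof is correct and is essentially identical to the paper's own argument: the paper likewise sets $u_0:=-u_i$, applies Proposition~\ref{prop:sht} to the polyhedron $\partial\mathcal{X}_i=\bigcap_{j=0}^q\{u_j\geq0\}$, and merges the two multipliers of $\pm u_i$ into a sign-unconstrained $\widetilde{\lambda}_i=\lambda_i-\lambda_0$, exactly as you merge $\mu^+-\mu^-$. Your two cautionary remarks are also well placed, since the proof of Proposition~\ref{prop:sht} does use nonemptiness of the underlying polyhedron (guaranteed here by Lemma~\ref{lem:cinietleeg} under minimality of $Q$) but nowhere uses minimality of the defining list of inequalities.
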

\begin{proof}
Let $u_0:=-u_i$. Then $\partial{\mathcal{X}}_i=\bigcap_{j=0}^q
\{u_j\geq0\}$ and $d(x)\geq0$ for $x\in\partial{\mathcal{X}}_i$.
Hence we can apply Proposition~\ref{prop:sht}, which gives the
existence of $\lambda_{j}\geq0$ with $j=0,\ldots,q$ and $c\geq0$
such that
\[
d(x)=\sum_{j=0}^q \lambda_{j} u_j(x)+c=\sum_{j=1}^q
\widetilde{\lambda_{j}} u_j(x)+c,
\]
with $\widetilde{\lambda_{j}}=\lambda_{j}\geq0$ for $j\not=i$ and
$\widetilde{\lambda_{i}}=\lambda_{i}-\lambda_{0}$.
\end{proof}
\begin{lemma}\label{lem:cinietleeg}
Assume $Q$ is minimal. It holds that
$\partial\mathcal{X}_i\not=\emptyset$ for all $i\in Q$.
\end{lemma}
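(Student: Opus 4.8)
The plan is to argue by contradiction, exploiting the minimality of $Q$. Suppose that $\partial\mathcal{X}_i=\emptyset$ for some $i\in Q$. Since $\mathcal{X}\subset\{u_i\geq0\}$ by construction, the identity $\partial\mathcal{X}_i=\mathcal{X}\cap\{u_i=0\}$ then forces $u_i(x)>0$ for every $x\in\mathcal{X}$. My aim is to deduce that the $i$-th constraint is redundant, i.e.\ that $\mathcal{Y}:=\bigcap_{j\in Q\backslash\{i\}}\{u_j\geq0\}$ equals $\mathcal{X}$, which contradicts the minimality of $Q$ applied to $Q'=Q\backslash\{i\}$.

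Since $\mathcal{X}\subset\mathcal{Y}$ is immediate, the only thing to establish is the reverse inclusion $\mathcal{Y}\subset\mathcal{X}$, equivalently $\mathcal{Y}\subset\{u_i\geq0\}$. I would prove this by contradiction as well: assume there is $y\in\mathcal{Y}$ with $u_i(y)<0$, and fix any $x\in\mathcal{X}$, which exists because $\mathcal{X}^\circ\neq\emptyset$ by (\ref{eq:XinD}). As $\mathcal{Y}$ is an intersection of halfspaces it is convex, so the segment $[x,y]$ lies entirely in $\mathcal{Y}$. Along this segment the affine function $u_i$ runs from the positive value $u_i(x)$ to the negative value $u_i(y)$, hence vanishes at some point $z\in[x,y]$. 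This point satisfies $u_j(z)\geq0$ for all $j\neq i$ (since $z\in\mathcal{Y}$) together with $u_i(z)=0$, so $z\in\mathcal{X}\cap\{u_i=0\}=\partial\mathcal{X}_i$, contradicting $\partial\mathcal{X}_i=\emptyset$. Therefore $\mathcal{Y}=\mathcal{X}$, which is the desired contradiction with minimality.

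The argument is elementary, and the only delicate points are bookkeeping rather than substance: one must ensure a base point $x\in\mathcal{X}$ is available, which is guaranteed by the standing assumption $\mathcal{X}^\circ\neq\emptyset$, and one uses the convexity of $\mathcal{Y}$ to keep the connecting segment inside the remaining halfspaces. Once these are in place, the intermediate value property of the affine map $u_i$ along $[x,y]$ does all the work. I do not anticipate a serious obstacle here; the statement is essentially the geometric fact that in a minimal halfspace representation every constraint must actually be attained on the polyhedron.
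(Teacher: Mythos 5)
Your proposal is correct and is essentially the paper's own argument in contrapositive dress: the paper directly invokes minimality of $Q$ to produce a point $y$ with $u_i(y)<0$ and $u_j(y)\geq0$ for $j\neq i$, then runs the same intermediate-value argument along the segment joining $y$ to a point of $\mathcal{X}$ to land on $\partial\mathcal{X}_i$. The double-contradiction framing you use adds no substance beyond the paper's direct version, so the two proofs coincide in all essentials.
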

\begin{proof}
Fix $i\leq q$. By minimality of $Q$ we can choose $x\in\R^p$ such
that $u_i(x)<0$ and $u_j(x)\geq0$ for all $j\not=i$. Since
$\mathcal{X}\not=\emptyset$, we can choose $y\in\mathcal{X}$. Then
$u_j(y)\geq 0$ for all $j$. For $t\in[0,1]$ it holds that
\[
u_j(tx+(1-t)y)=t u_j(x)+(1-t)u_j(y),
\]
which is non-negative for $j\not=i$. For
$t=u_i(y)/(u_i(y)-u_i(x))$ we have $u_i(tx+(1-t)y)=0$, so
$tx+(1-t)y\in\partial\mathcal{X}_i$.
\end{proof}
\begin{prop}\label{prop:uiscv}
Assume $Q$ is minimal. Suppose
$\partial{\mathcal{X}}_i\subset\{d=0\}$ for some $i\in Q$. Then
there exists $\lambda_i\in\R$ such that $v(x)=\lambda_i u_i(x)$
for $x\in\mathcal{X}$. If $\mathcal{X}^\circ\not=\emptyset$, then
$v(x)=\lambda_i u_i(x)$ for all $x\in\R^p$.
\end{prop}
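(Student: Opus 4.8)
We first observe that the affine map called $v$ in the conclusion is the function $d$ of the hypothesis (the statement uses the two symbols for the same affine function); writing $v:=d$, the goal is to produce $\lambda_i$ with $v=\lambda_i u_i$ on $\mathcal{X}$, and on all of $\R^p$ when $\mathcal{X}^\circ\neq\emptyset$. The plan is to apply Proposition~\ref{prop:driftpos} to both $d$ and $-d$. Since $\partial\mathcal{X}_i\subset\{d=0\}$ we have $\partial\mathcal{X}_i\subset\{d\geq0\}$ and $\partial\mathcal{X}_i\subset\{-d\geq0\}$, so Proposition~\ref{prop:driftpos} furnishes
\[
d=\sum_{j\in Q}\lambda_j u_j+c,\qquad -d=\sum_{j\in Q}\mu_j u_j+c',
\]
with $\lambda_j,\mu_j\geq0$ for $j\neq i$ and $c,c'\geq0$. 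These are identities on all of $\R^p$ (Proposition~\ref{prop:sht}, on which Proposition~\ref{prop:driftpos} rests, equates the coefficient vectors of the two affine functions), a point that will be crucial for invoking minimality below.

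Next I would evaluate the first identity on $\partial\mathcal{X}_i$ (nonempty by Lemma~\ref{lem:cinietleeg}), where $d=0$ and $u_i=0$, to get $0=\sum_{j\neq i}\lambda_j u_j+c$ pointwise. Since $\partial\mathcal{X}_i\subset\mathcal{X}$ all $u_j\geq0$ there, so each summand is non-negative and a vanishing non-negative sum forces $c=0$ and $\lambda_j u_j\equiv0$ on $\partial\mathcal{X}_i$ for every $j\neq i$; the same argument for $-d$ gives $c'=0$. Adding the two (now constant-free) global identities yields the relation
\[
(\lambda_i+\mu_i)\,u_i+\sum_{j\neq i}(\lambda_j+\mu_j)\,u_j=0 \quad\text{on }\R^p,
\]
whose analysis is the core of the proof. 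Set $s:=\lambda_i+\mu_i$.

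If $s<0$, this relation writes $u_i$ as a non-negative combination of the $u_j$ with $j\neq i$, so $\bigcap_{j\neq i}\{u_j\geq0\}\subset\{u_i\geq0\}$ and hence $\bigcap_{j\neq i}\{u_j\geq0\}=\mathcal{X}$, contradicting minimality of $Q$; thus $s\geq0$. If $s=0$, the relation reduces to $\sum_{j\neq i}(\lambda_j+\mu_j)u_j\equiv0$, and restricting to $\mathcal{X}$ (where the terms are non-negative) forces $(\lambda_j+\mu_j)u_j\equiv0$, hence $\lambda_j u_j\equiv0$ on $\mathcal{X}$ for each $j\neq i$; therefore $v=d=\lambda_i u_i$ on $\mathcal{X}$. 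If $s>0$, restricting the relation to $\mathcal{X}$ forces $s\,u_i\equiv0$, i.e.\ $u_i\equiv0$ on $\mathcal{X}$, so that $\partial\mathcal{X}_i=\mathcal{X}$ and the hypothesis gives $d\equiv0$ on $\mathcal{X}$, whence $v=\lambda_i u_i$ holds on $\mathcal{X}$ for any $\lambda_i$. This settles the first assertion. For the second, assume $\mathcal{X}^\circ\neq\emptyset$. Then $u_i\not\equiv0$ on $\mathcal{X}$, which excludes the case $s>0$, leaving $s=0$; evaluating $\sum_{j\neq i}(\lambda_j+\mu_j)u_j\equiv0$ at an interior point $x_0$, where every $u_j(x_0)>0$ by minimality, gives $\lambda_j=0$ for all $j\neq i$, so the first global identity collapses to $v=d=\lambda_i u_i$ on $\R^p$.

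The step I expect to be the main obstacle is ensuring that the decompositions coming from Proposition~\ref{prop:driftpos} are genuine identities on $\R^p$, not merely on $\mathcal{X}$: globality is exactly what makes the minimality contradiction available to rule out $s<0$, and it is what upgrades the conclusion from $\mathcal{X}$ to $\R^p$ in the interior case. The remaining care lies in the sign bookkeeping across the three cases for $s$ and in checking that the degenerate possibility $u_i\equiv0$ on $\mathcal{X}$ is consistent precisely with $\mathcal{X}^\circ=\emptyset$, so that it is harmlessly absorbed in the first assertion yet correctly excluded in the second.
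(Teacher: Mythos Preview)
Your proof is correct and follows essentially the same approach as the paper's: apply Proposition~\ref{prop:driftpos} to both $d$ and $-d$, eliminate the constants by evaluating on $\partial\mathcal{X}_i$, add the identities, and split into cases according to the sign of $\lambda_i+\mu_i$, using minimality to rule out the negative case and to upgrade to a global identity when $\mathcal{X}^\circ\neq\emptyset$. The only cosmetic differences are that the paper adds first and then evaluates, and that it rules out $\lambda_i+\mu_i<0$ by directly exhibiting a point with $u_i<0$ and $u_j\geq0$ for $j\neq i$, whereas you phrase this as a contrapositive.
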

\begin{proof}
We have $\partial\mathcal{X}_i\subset\{v\geq0\}$ and
$\partial\mathcal{X}_i\subset\{-v\geq0\}$. Applying
Proposition~\ref{prop:driftpos} with $d=v$ respectively $d=-v$, we
derive that
\begin{align*}
v(x)&=\sum_{j=1}^q \lambda_j u_j(x)+c_1\\
-v(x)&=\sum_{j=1}^q \mu_j u_j(x)+c_2,
\end{align*}
for some $\lambda,\mu\in\R^{1\times p}$ with
$\lambda_j,\mu_j\geq0$ for $j\not=i$ and $c_1,c_2\geq0$. Adding
the equations in the above display gives
\[
0=\sum_{j=1}^q (\lambda_j+\mu_j) u_j(x)+c_1+c_2.
\]
By Lemma \ref{lem:cinietleeg} we can choose
$x\in\partial\mathcal{X}_i$ and deduce that $c_1=c_2=0$. So
\[
-(\lambda_i+\mu_i)u_i(x)=\sum_{j\not=i}(\lambda_j+\mu_j)u_j(x).
\]
By minimality of $Q$ we can choose $x\in\R^p$ such that $u_i(x)<0$
and $u_j(x)\geq0$ for all $j\not=i$. This gives that
$c:=\lambda_i+\mu_i\geq0$. If $c>0$, then for $x\in\mathcal{X}$ we
have
\[
0\leq u_i(x)=-c^{-1}\sum_{j\not=i}(\lambda_j+\mu_j)u_j(x)\leq 0,
\]
whence $u_i(x)=0$ for $x\in\mathcal{X}$. So
$\mathcal{X}=\partial\mathcal{X}_i\subset\{v=0\}$ and
$v(x)=u_i(x)=0$ for $x\in\mathcal{X}$. If $c=0$, then
$\sum_{j\not=i}(\lambda_j+\mu_j)u_j(x)=0$ for all $x$. This holds
in particular for $x\in\mathcal{X}$, i.e.\ for $x$ such that
$u_j(x)\geq0$ for all $j$. Hence for $x\in\mathcal{X}$ we have
$\lambda_ju_j(x)=\mu_ju_j(x)=0$ for all $j\not=i$, so
\begin{align}\label{eq:vonC}
v(x)=\sum_{j=1}^q \lambda_j u_j(x)+c_1=\lambda_i u_i(x),
\end{align}
for $x\in\mathcal{X}$. If $\mathcal{X}^\circ\not=\emptyset$, then
choosing $x\in\mathcal{X}^\circ$ gives $u_j(x)>0$ for all $j$,
which implies $\lambda_j=0$ for all $j\not=i$. Then
(\ref{eq:vonC}) holds for all $x\in\R^p$.
\end{proof}
%\begin{lemma}\label{lem:uis0}
%Suppose $u_i(x)=cu_j(x)$ for all $x\in\mathcal{X}$, for some $i\in
%Q$, $j\in Q\backslash\{i\}$ and $c\geq0$. Then $u_i(x)=0$ for all
%$x\in\mathcal{X}$.
%\end{lemma}
%\begin{proof}
%Suppose there exists $x\in\mathcal{X}$ such that
%$u_i(x)=cu_j(x)>0$. By minimality of $\{1,\ldots,q\}$, there
%exists $y\in\R^p$ such that $u_i(y)<0$ and $u_k(y)\geq0$ for all
%$k\not=i$. For $t\in[0,1]$ it holds that
%\[
%u_k(tx+(1-t)y)=t u_k(x)+(1-t)u_k(y),
%\]
%which is non-negative for $k\not=i$ and strictly positive for
%$k=j$ and $t\in(0,1]$. For $t=u_i(y)/(u_i(y)-u_i(x))\in(0,1)$ we
%have $u_i(tx+(1-t)y)=0$. So $z:=tx+(1-t)y\in\mathcal{X}$, but
%$u_i(z)=0<u_j(z)$, which contradicts the assumptions.
%\end{proof}

%\nocite{*}
%\bibliographystyle{plain}
\bibliographystyle{imsart-nameyear}
\bibliography{refs}

\end{document}